\tikzstyle{vertex}=[circle, fill=black, inner sep= 0, minimum size = 6]
\tikzstyle{edge}=[line width=1.2pt]
\tikzstyle{edge-dashed}=[line width=1.2pt, dashed]
\newcommand{\Title}[1]{{\large\bf \uppercase{#1}}}
\newcommand{\Email}[1]{\\
\smallskip \small{e-mail:} \textnormal{#1}}
\newcommand{\Author}[2]{{\sc #1} \\
\medskip
{\small\it #2}}
\newtheorem{theorem}{Theorem}[section]
\newtheorem{corollary}[theorem]{Corollary}
\newtheorem{proposition}[theorem]{Proposition}
\newtheorem{conjecture}[theorem]{Conjecture}
\newtheorem{lemma}[theorem]{Lemma}
\def\qm{\rm{QM}}
\def\qmi{\chi^{QM}}
\def\nsd{\rm{NSD}}
\def\nsdi{\chi_{\sum}^e}
\def\qmnsd{\rm{QM\;NSD}}
\def\qmnsdi{\chi_{\sum}^{QM}}
\def\mnsdi{\chi_{\sum}^{M}}
\def\d{d}
\def\smc{\sigma_c}
\begin{document}

\begin{center}
\Title{Quasi-majority neighbor sum distinguishing edge-colorings}
\bigskip

\Author{Rafa{\l} Kalinowski$^1$, Monika Pil\'sniak$^1$, El\.zbieta Sidorowicz$^2$, El\.zbieta Turowska$^2$}
{$^1$Department of Discrete Mathematics, AGH University of Krakow, Krak\'ow, Poland\\
$^2$Institute of Mathematics, University of Zielona G\'{o}ra, Zielona G\'ora, Poland}
\Email{kalinows@agh.edu.pl, pilsniak@agh.edu.pl, e.sidorowicz@im.uz.zgora.pl, e.turowska@im.uz.zgora.pl}
\bigskip
\end{center}

\begin{abstract}
In this paper, a $k$-edge-coloring of $G$ is any mapping $c:E(G)\longrightarrow [k]$. 
The edge-coloring $c$ of $G$ naturally defines a vertex-coloring $\sigma_{c}: V(G) \to \mathbb{N}$, where $\sigma_{c}(v)=\sum_{u\in N_G(v)}c(vu)$ for every vertex $v\in V(G)$. The edge-coloring $c$ is said to be neighbor sum distinguishing if it results in a proper vertex-coloring $\sigma_{c}$, which that $\sigma_{c}(u) \neq \sigma_{c}(v)$ for every edge $uv$ in $G$.
 
We investigate neighbor sum distinguishing edge-colorings with local constraints, where the edge-coloring is quasi-majority at each vertex. Specifically, every vertex $v$ is incident to at most $\left\lceil\d(v)/2\right\rceil$ edges of one color. This type of coloring is referred to as quasi-majority neighbor sum distinguishing edge-coloring. The minimum number of colors required for a graph to have a quasi-majority neighbor sum distinguishing edge-coloring is called the quasi-majority neighbor sum distinguishing index. A graph is nice if it has no component isomorphic to $K_2$.  We prove that any nice graph admits a quasi-majority neighbor sum distinguishing edge-coloring using at most 12 colors. This bound can be improved for bipartite graphs and graphs with a maximum degree of at most 4. Specifically, we show that every nice bipartite graph can be colored with 6 colors, and every nice graph with a maximum degree of at most 4 can be colored with 7 colors. Additionally, we determine the exact value of the quasi-majority neighbor sum distinguishing index for complete graphs, complete bipartite graphs, and trees.

We also consider majority neighbor sum distinguishing edge-colorings, that is, when each vertex is incident to at most $d(v)/2$ edges with the same color.
 
\end{abstract}

\section{Introduction}

We focus on simple, finite graphs. The sets of vertices and edges of a graph $G$ are denoted by $V(G)$ and $E(G)$, respectively. The term {\it order} of $G$ refers to $|V(G)|$, while {\it size} of $G$ refers to $|E(G)|$. $N_{G}(v)$ ($N(v)$ for short) denotes the neighborhood of a vertex $v$ in a~graph $G$ and $\d_{G}(v)$ ($\d(v)$ for short) the degree of a vertex $v$ in a~graph $G$. For any set $S \subseteq V(G)$, the symbol $G[S]$ denotes the subgraph of $G$ induced by $S$. Let $G_1$, $G_2$ be graphs such that $V(G_1) \cap V(G_2)$ can be nonempty. By $G_1 \cup G_2$ we mean a graph with the vertex set $V(G_1) \cup V(G_2)$ and the edge set $E(G_1) \cup E(G_2)$.

In this paper, a {\it{$k$-edge-coloring}} of a~graph $G$ is any mapping $c:E(G) \rightarrow \left[k\right]$.  Any edge-coloring $c$ induces a~vertex-coloring $\smc:V(G) \rightarrow \mathbb{N}$ given by
\begin{eqnarray*}
\smc(v)=\sum_{u \in N(v)}c(vu),
\end{eqnarray*}
for every $v \in V(G)$. We say that a $k$-edge-coloring $c$ {\it{distinguishes}} vertices $u, v \in V(G)$ if $\smc(u)\neq\smc(v)$.  
A~$k$-edge-coloring of a~graph $G$~is termed {\it{neighbor sum distinguishing}} ($\nsd$ for short) if it distinguishes every pair of adjacent vertices, i.e. $\smc$ is a~proper vertex-coloring of $G$. The smallest $k$ for which there exists an $\nsd$ $k$-edge-coloring of a~graph $G$ is called the {\it neighbor sum distinguishing index} and is denoted by $\nsdi(G)$.

Observe that a graph $G$ always admits an edge-coloring that induces a~proper vertex-coloring, except when it includes $K_2$ as a~component. By assigning a~different power of 2 to each edge, we ensure that every vertex receives a~distinct sum of colors of incident edges. However, the two vertices of $K_2$ cannot be distinguished in this way. Therefore, we call $G$ {\it nice} whenever it lacks $K_2$ as a~component.

The concept of coloring edges so that it generates a proper vertex-coloring has been frequently considered, particularly with the emergence of the 1-2-3 Conjecture posed in 2004 by Karo\'nski, {\L}uczak and Thomason $\cite{KLT}$. To be specific, this hypothesis states that the smallest value of $k$ for which every nice graph $G$ has a~$k$-edge-coloring $c$   such that $\sigma_c(u)\neq\sigma_c(v)$ for every edge $uv \in E(G)$ is equal to 3. After a series of tens of papers, this conjecture was finally proven by Keusch $\cite{K2}$.

\begin{theorem} {\rm (Keusch \cite{K2})}
Every nice graph $G$ satisfies $\nsdi(G) \leq 3$.
\end{theorem}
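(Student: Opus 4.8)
The statement is the 1-2-3 Conjecture, so any honest ``proposal'' can only outline a strategy; the complete argument of Keusch is long and delicate. The plan is to work with edge-weightings taking values in $\{1,2,3\}$ and to build such a weighting incrementally, in the spirit of the interval-maintenance method that underlies all recent progress on this problem. \textbf{Step 0 (Reductions).} First reduce to a single connected graph $G$ with $\delta(G)\geq 2$: components isomorphic to $K_2$ are excluded because $G$ is nice, and pendant vertices can be postponed to the very end, since a vertex of degree $1$ has a single incident edge whose weight still fixes its sum once its neighbour's remaining edges are decided. Short cycles and other degenerate cases are checked by hand.

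\textbf{Step 1 (A bounded palette via a greedy pass).} Fix a vertex ordering $v_1,\dots,v_n$ in which every vertex, at the moment it is processed, still has at least one uncommitted incident edge (for instance, order the vertices of a spanning tree so that children precede parents, treating the root with extra care). Process $v_1,v_2,\dots$ in this order. When we reach $v_i$, the weights of its edges to earlier vertices are already committed, so $\sigma_c(v_i)$ is confined to an interval whose length grows with the number of still-free incident edges; moreover, changing the weight of a free edge $v_iv_j$ with $j>i$ shifts $\sigma_c(v_i)$ in a controlled way \emph{without disturbing} any already-fixed sum $\sigma_c(v_1),\dots,\sigma_c(v_{i-1})$. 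One then shows that with weights in $\{1,\dots,5\}$ there is always a legal move: we can place $\sigma_c(v_i)$ on a value distinct from the at most $\d(v_i)-1$ already-fixed neighbouring sums while keeping the reachable interval of every not-yet-processed vertex wide enough for its own later turn. This reproduces the Kalkowski--Karo\'nski--Pfender bound $\nsdi(G)\leq 5$, and is the routine part of the argument.

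\textbf{Step 2 (From $5$ to $3$ --- the crux).} Shrinking the palette to the optimal size $3$ is exactly Keusch's contribution, and this is where essentially all the difficulty lies: with only three weights a naive single pass loses too much flexibility, because fixing $\sigma_c(v_i)$ may consume more ``room'' than some later vertex can spare. The remedy is to stop treating vertices in isolation and instead run a global scheme on a rooted spanning tree, in which a surplus or deficit created at one vertex is transported along tree paths and balanced against the freedom available elsewhere in the graph, combined with a careful parity and bookkeeping analysis guaranteeing that all the required local corrections can be carried out simultaneously and consistently inside $\{1,2,3\}$. The main obstacle is making this ``flexibility accounting'' airtight --- in particular, checking that the extremal configurations (dense graphs, regular graphs, and graphs with many degree-$2$ vertices) are all covered. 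This is what kept the conjecture open for nearly two decades and is precisely the content of Keusch's theorem, which we therefore invoke as a black box.
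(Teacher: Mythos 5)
There is nothing to check against here: the paper does not prove this statement at all. It is quoted as an external theorem of Keusch \cite{K2} (the resolution of the 1-2-3 Conjecture) and used as a black box, exactly as you end up doing. In that narrow sense your treatment coincides with the paper's: citing \cite{K2} is the intended justification, and no more is expected within this paper.

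As a \emph{proof attempt}, however, your proposal has an unavoidable gap, and you should be clear-eyed about where it is. Step~1 is a reasonable sketch of the Kalkowski--Karo\'nski--Pfender argument for the weaker bound $\nsdi(G)\leq 5$, but even there you omit the actual invariant that makes the greedy pass work (each processed vertex retains a two-element set of admissible sums, realized by a reserved $\pm$-adjustment on one forward edge --- the same device this paper reuses in its Theorem on the bound $12$), so it is an outline rather than a proof. Step~2 contains no mathematical content at all: it describes the difficulty of shrinking the palette from $5$ to $3$ and then ``invokes as a black box'' precisely the theorem being proved, which is circular. So the honest summary is that your write-up justifies the statement only by citation --- which is acceptable here, since that is all the paper does --- but it should not be presented as a proof sketch of Keusch's argument, whose actual mechanism (a global correction scheme quite different from a single greedy pass over a spanning tree) is not reproduced, even schematically, by what you wrote.
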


When we assume the additional restriction that the $k$-edge-coloring must be proper, then we obtain another version of $\nsd$ edge-coloring which we call {\it{neighbor sum distinguishing proper edge-coloring}}. The smallest value of $k$ such that such a~coloring exists is denoted by $\chi'_{\sum}(G)$. This value is related to a conjecture proposed by Flandrin, Marczyk, Przyby{\l}o, Sacl\'e, and Wo\'zniak in \cite{FMPSW}. 

\begin{conjecture} {\rm (Flandrin et al. \cite{{FMPSW}})}
Every nice graph $G \neq C_5$ satisfies $\chi'_{\sum}(G)\le \Delta(G)+2$.
\end{conjecture}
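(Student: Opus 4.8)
As this is an open conjecture, I describe a plan of attack rather than a proof, flagging where the real difficulty lies. First dispose of the low-degree cases: if $\Delta(G)\le 2$ then every component of $G$ is a path or a cycle, $3$ colours suffice for all of them except $C_5$ (this is in \cite{FMPSW}), and $\Delta(G)+2\ge 4$, so the bound holds for $G\ne C_5$. Hence assume $\Delta:=\Delta(G)\ge 3$; it then suffices to construct a neighbour sum distinguishing \emph{proper} edge-colouring from a palette of $\Delta+2$ colours.

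The natural starting point is a proper edge-colouring with $\Delta+1$ colours, furnished by Vizing's theorem, which leaves one reserve colour for repairs. Call an edge $uv$ \emph{conflicting} if $\sigma_c(u)=\sigma_c(v)$; the task is to destroy every conflicting edge while keeping the colouring proper. The first thing I would try is to repair conflicting edges one at a time by a local move at an endpoint --- recolouring a single incident edge (possibly with the reserve colour), or a swap along a short Kempe chain or Vizing fan --- chosen so that the sum at that endpoint changes while properness is preserved. The obstruction, and the heart of the matter, is that such a repair can make previously good edges at neighbours conflicting, so the process need not terminate; one needs either a strictly decreasing potential function or a vertex ordering along which conflicts can be settled without reopening earlier ones, and a single reserve colour gives almost no room for this.

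For that reason I would bring in one of the two tools that have repeatedly worked for sum-distinguishing problems. (a) The Combinatorial Nullstellensatz: order the edges using a degeneracy or BFS order of $V(G)$, attach to a colouring the polynomial $\bigl(\prod_{uv\in E(G)}(\sigma_c(u)-\sigma_c(v))\bigr)$ times the properness polynomial, and show that the relevant monomial has nonzero coefficient whenever the palette has size $\Delta+2$ --- the ``$+2$'' is precisely what keeps the number of forbidden values at each vertex small enough for this to succeed. (b) A probabilistic argument with the Lov\'asz Local Lemma, in the spirit of Przyby{\l}o's asymptotic bounds: colour edges randomly, introduce one bad event per properness violation and one per conflicting edge, bound the dependency degree, and verify the local lemma inequality; here the dependencies become too dense near vertices of small degree, which must be peeled off and coloured separately by hand.

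The main obstacle in every route is that the bound is essentially tight: $C_5$ is a genuine exception, and the complete and complete bipartite graphs recorded in \cite{FMPSW} are near-extremal, so there is only one colour of slack. Consequently the recolouring moves, the Nullstellensatz coefficient estimate, and the local-lemma dependency bound all operate right at the threshold where they barely work, and vertices of small degree --- whose neighbour-sums can take only a few values --- force a separate and delicate analysis. Reconciling the global counting argument with this local sparseness is, I expect, the crux.
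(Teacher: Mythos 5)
You have not proved the statement, and neither does the paper: this is the conjecture of Flandrin, Marczyk, Przyby{\l}o, Sacl\'e and Wo\'zniak, which the paper explicitly records as unresolved and only cites as motivation (the best known general bounds being $\chi'_{\sum}(G)\le\lceil(10\Delta(G)+2)/3\rceil$ for $\Delta(G)\ge 18$, $\chi'_{\sum}(G)\le\Delta(G)+O(\sqrt{\Delta(G)})$, bounds involving the coloring number, and $6$ instead of the conjectured $5$ for subcubic graphs). Your text is candid about this, but as a proof attempt it has a gap that is the entire argument: beyond the easy case $\Delta(G)\le 2$, none of the three routes you sketch is carried out. The local-repair scheme (Vizing colouring plus one reserve colour plus Kempe/fan swaps) is exactly where the difficulty sits --- you name the need for a decreasing potential or a conflict-free processing order but do not supply one, and with a single spare colour no such argument is known. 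The Combinatorial Nullstellensatz route requires exhibiting a monomial of the right degree with nonzero coefficient for a polynomial whose degree grows with $|E(G)|$, and no choice of monomial or coefficient computation is given; the known Nullstellensatz-based results of this type land at $2\Delta+\mathrm{col}(G)-1$ or $\Delta+3\,\mathrm{col}(G)-4$, not $\Delta+2$. Likewise the Local Lemma sketch verifies no inequality and, as you note yourself, breaks down at low-degree vertices, which is precisely where the conjectured bound is tight.

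So the concrete missing ingredients are: a terminating repair procedure (or potential function) compatible with properness under a palette of size $\Delta+2$; or a specific monomial/coefficient argument for the distinguishing-plus-properness polynomial; or a working Local Lemma setup including the treatment of small-degree vertices. Identifying these obstacles is a reasonable research plan, but it does not establish the statement, which remains open; within this paper the conjecture is used only as context, and the paper's own contributions (the quasi-majority and majority variants, with bounds such as $12$, $6$ for bipartite graphs, and $7$ for $\Delta\le 4$) do not imply it.
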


This conjecture remains unresolved. Wang and Yan \cite{WaYa2014} showed that \linebreak $\chi'_{\sum}(G) \le \left\lceil (10\Delta(G)+2)/3 \right\rceil$ for graphs with $\Delta(G) \ge 18$. Additionally, Przyby{\l}o \cite{Pr19b} established that $\chi'_{\sum}(G) \le \Delta + O(\sqrt{\Delta})$, where $\Delta = \Delta(G)$.  Let ${\rm col}(G)$ be the \emph{coloring number} of $G$, defined as the smallest  $k$ such that there is  a vertex ordering of $G$ in which each vertex is preceded by at most $k-1$ of its neighbors. It is known that $\chi'_{\sum}(G) \le 2\Delta(G) + {\rm col}(G) - 1$ \cite{Pr15} and $\chi'{\sum}(G) \le \Delta(G) + 3{\rm col}(G) - 4$ \cite{PrzWo15}.

Dailly, Duch\'ene, Parreau, and Sidorowicz  in \cite{DDPS}  generalized these conjectures by defining the concept of \emph{neighbor sum distinguishing $d$-relaxed edge-coloring}. A $k$-edge-coloring is $d$-\emph{relaxed} if each monochromatic set of edges induces a subgraph with  maximum degree at most~$d$. If a $d$-relaxed $k$-edge-coloring is distinguishing, then it is called a \emph{neighbor sum  distinguishing $d$-relaxed $k$-edge-coloring}. The smallest  $k$ for which  there is a neighbor sum distinguishing $d$-relaxed $k$-edge-coloring of $G$ is denoted by $\chi'^d_{\sum}(G)$. Consequently, $\chi'^{1}_{\sum}(G) = \chi'_{\sum}(G)$, and $\chi'^{\Delta(G)}_{\sum}(G)$ correspond to the 1-2-3 Conjecture.

An approach to investigate graph parameters is to examine bounds on the maximum degree of a graph. While graphs with a maximum degree 2 (which are essentially forests of paths and cycles) are typically straightforward to analyze, the case of subcubic graphs (graphs with a maximum degree 3) often presents more complexity. For example, $\chi'_{\sum}(G)\le 6$ for every nice subcubic graph $G$ \cite{HuWaWa17} (the conjecture posits that the bound should be 5). In~\cite{DDPS}, the authors examined the 2-relaxed case for subcubic graphs and proved that $\chi'^{2}_{\sum}(G) \le 4$ for every nice graph $G$ with maximum degree at most 3.

The paper \cite{DS} by Dailly and Sidorowicz investigates edge-colorings that distinguish vertices and allow adjacent edges to have the same color, but with an additional restriction. It is required that the set of edges incident to a vertex is not monochromatic when the degree of the vertex is large enough. It is proven that every nice graph has a neighbor sum distinguishing $7$-edge-coloring such that the set of edges incident to a vertex of degree at least~6 is not monochromatic.

Inspired by these results, we explore an edge-coloring approach that allows each vertex $v$ to be incident with at most $\left\lceil {\d(v)}/{2}\right\rceil$ edges of one color.  We refer to an edge-coloring where every vertex $v$ has at most $\left\lceil {\d(v)}/{2}\right\rceil$ incident edges of one color as to a quasi-majority edge-coloring. Thus, the aim of our work is to merge two types of coloring, a quasi-majority edge-coloring and a neighbor sum distinguishing edge-coloring, resulting in a new variant called quasi-majority neighbor sum distinguishing edge-coloring.

Our paper is organized as follows. In Section~\ref{sec:quasi-majority}, we present the definition and some properties of quasi-majority edge-coloring. In Section \ref{sec:basic properties}, we define a quasi-majority neighbor sum distinguishing edge-coloring and outline its basic properties. Section \ref{sec:classes of graphs} focuses on special classes of graphs, where we determine the exact value of the quasi-majority neighbor sum distinguishing index for complete graphs, complete bipartite graphs and trees. Additionally, we prove that the quasi-majority neighbor sum distinguishing index of graphs with maximum degree 4 is at most 7. In Section \ref{sec:general upper bound}, we establish a constant upper bound on the quasi-majority neighbor sum distinguishing index of every nice graph. We demonstrate that every graph has a quasi-majority neighbor sum distinguishing 12-edge-coloring, and provide an upper bound on the quasi-majority neighbor sum distinguishing index in terms of the maximum degree of a graph. This result offers an upper bound better than 12 for graphs with small maximum degree. Bock, Kalinowski, Pardey, Pil\'sniak, Rautenbach, and Wo\'zniak \cite{BKPPRW} introduced a majority edge-coloring, where each vertex $v$ has at most ${\d(v)}/{2}$ edges incident in one color. By merging majority edge-coloring and neighbor sum distinguishing edge-coloring, we derive majority neighbor sum distinguishing edge-coloring. In Section \ref{sec:majority}, we discuss the impact of our results on majority neighbor sum distinguishing edge-coloring. In Section, \ref{sec:open problems} we present some open problems.


\section{Quasi-majority edge-coloring} \label{sec:quasi-majority}

A~$k$-edge-coloring $c$ of a~graph $G$~is called {\it quasi-majority at a vertex} $v\in V(G)$ if $v$ is incident to at most $\left\lceil {\d(v)}/{2}\right\rceil$ edges with color $\alpha$, for every color $\alpha\in\left[k\right]$. If $c$ is quasi-majority at every vertex of $G$, then it is called {\it quasi-majority} ($\qm$ for short).  The minimum value of $k$ for which there exists a~$\qm$ $k$-edge-coloring of a~graph $G$ is called {\it{quasi-majority index}} and is denoted by $\qmi(G)$.

An edge-coloring of a~graph $G$~is called {\it{majority}} if every vertex $v \in V(G)$ is incident to at most ${\d(v)}/{2}$ edges in one color. This concept was introduced in~~\cite{BKPPRW}, where the following theorems were proven.  
\begin{theorem}{\rm (Bock et al.  \cite{BKPPRW})}\label{m4}
Every finite graph of minimum degree at least $2$ admits a majority $4$-edge-coloring.
\end{theorem}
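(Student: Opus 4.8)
The hypothesis cannot be weakened — a vertex of degree $1$ never satisfies the majority condition — and since components may be coloured separately, I may assume $G$ is connected with $\delta(G)\ge 2$. The plan is to treat the Eulerian case first and then reduce to it. If every degree of $G$ is even, decompose $E(G)$ into edge-disjoint cycles, colour each even cycle by $1,2,1,2,\dots$ and each odd cycle by $1,2,\dots,1,2,3$, placing the single $3$ so that consecutive edges of the cycle still differ. A vertex $v$ then lies on exactly $d(v)/2$ of these cycles and is incident, on each of them, with two edges of two distinct colours, so every colour occurs at most $d(v)/2$ times at $v$; in fact this already shows every Eulerian graph has a majority $3$-edge-colouring.

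For the general case, let $O$ be the (even-sized) set of odd-degree vertices of $G$, add a perfect matching $M$ on $O$ — allowing a parallel edge where needed — to obtain a graph $G'$ with all degrees even and $\delta(G')\ge 2$, and fix a cycle decomposition of $G'$. I colour the cycles by the recipe above, assign colour $4$ to every edge of $M$, and in each cycle $C^{*}$ carrying an edge $\mu=vv'$ of $M$ I will choose the colours of the two genuine $G$-edges $f_v,f_{v'}$ of $C^{*}$ lying next to $\mu$ so that $c(f_v)$ is "under-represented" at $v$ and $c(f_{v'})$ is under-represented at $v'$. An edge of $M$ is not an edge of $G$, so it imposes no constraint.

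It remains to see that the choices repair the only vertices that could be overloaded, namely $w\in O$. Such a $w$ lies on the cycle $C^{*}$ through its matching edge $\mu_w$ together with $\frac{d_G(w)-1}{2}$ further cycles, each contributing two distinct colours from $\{1,2,3\}$ at $w$; hence some colour occurs at most $\big\lfloor (d_G(w)-1)/3\big\rfloor$ times among those further cycles, and taking $c(f_w)$ to be such a colour keeps every colour at $w$ to at most $\big\lfloor (d_G(w)-1)/3\big\rfloor+1\le\big\lfloor d_G(w)/2\big\rfloor$ (which holds for every odd $d_G(w)\ge 3$), while the remaining colours gain nothing from $f_w$ and stay at most $\frac{d_G(w)-1}{2}$. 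Vertices outside $O$ are not affected by the choice of the $f$-edges and remain fine as in the Eulerian case.

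What makes this more than a formality is realising the prescribed colours on the $f$-edges simultaneously with a proper $4$-edge-colouring of each $C^{*}$ that keeps the $M$-edges the fourth colour. This is automatic when the $M$-cycles are long, but when some $C^{*}$ has length $2$ or $3$ the pinned $G$-edges meet, and if both endpoints of $\mu$ admit only the same single under-represented colour one is forced to spend colour $4$ on a genuine edge of $G$ and then absorb it at its far endpoint; one must also order the recolourings of different $M$-cycles and, when the cycle decomposition of $G'$ cannot be chosen with at most one edge of $M$ per cycle, follow the resulting chain of repairs. Carrying out this bounded but genuine case analysis — and in particular checking that the fourth colour always suffices to untangle it — is the crux, and is exactly why the bound is $4$ rather than $3$.
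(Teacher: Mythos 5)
This statement is quoted in the paper from Bock et al.\ \cite{BKPPRW} without proof, so there is no in-paper argument to compare against; judging your attempt on its own merits, it is not yet a proof. The Eulerian core is fine: a cycle decomposition with each cycle properly coloured from $\{1,2,3\}$ gives every colour at most $d(v)/2$ times at each vertex, and your pigeonhole arithmetic $\lfloor (d-1)/3\rfloor+1\le\lfloor d/2\rfloor$ for odd $d\ge 3$ is correct. But the theorem is carried entirely by the step you yourself label ``the crux'' and do not carry out: you must realise the pinned colours on the edges $f_v,f_{v'}$ \emph{simultaneously} with a proper colouring of each cycle of the decomposition of $G'$, and you have not shown this is always possible. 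The problematic configurations are exactly the ones you list and then wave at: a $2$-cycle formed by a matching edge parallel to a genuine edge $vv'$, where a single edge must satisfy the prescriptions at both ends; cycles of length $3$ or cycles carrying several matching edges, where pinned edges are adjacent or separated by very short paths; and, most seriously, the fallback of spending colour $4$ on genuine edges. That fallback is not self-evidently safe: an edge can receive colour $4$ as the repair edge $f_w$ of an odd vertex $w$, and its other endpoint $u$ may accumulate colour-$4$ edges from several different cycles (each carrying its own repair), so the count of colour $4$ at $u$ -- in particular when $u$ itself lies in $O$ and already uses colour $4$ on its own $f_u$ -- can exceed $\lfloor d_G(u)/2\rfloor$ unless the ``chain of repairs'' is ordered and analysed. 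Asserting that ``the fourth colour always suffices to untangle it'' is precisely the content of the theorem, not a routine verification, so as written the argument has a genuine gap.

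One smaller point: you have more slack than you use, since colour $4$ is always admissible for $f_w$ at $w$ itself (no genuine edge at $w$ has colour $4$ initially), so each pinned edge has at least two admissible colours at its odd endpoint. That observation would likely help in closing the case analysis, but it does not by itself resolve the interaction along short cycles or the accumulation of colour $4$ at shared far endpoints, which is where a complete proof still has to do real work.
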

\begin{theorem} {\rm (Bock et al. \cite{BKPPRW})} \label{T2}
Let G be a~connected graph.
\begin{enumerate}
	\item If $G$ has an even number of edges or $G$ contains vertices of odd degrees, then $G$ has a~$2$-edge-coloring such that, for every vertex $v$ of $G$, at most $\left\lceil \frac{\d(v)}{2}\right\rceil$ of the edges incident to $v$ have the same color.
	\item If $G$ has an odd number of edges, all the vertices of $G$ have even degree, and $u$ is any vertex of $G$, then $G$ has a~$2$-edge-coloring such that, for every vertex $v$ of $G$ distinct from $u$, exactly $\frac{\d(v)}{2}$ of the edges incident to $v$ have the same color, and exactly $\frac{\d(u)}{2}+1$ of the edges incident to $u$ have the same color.
\end{enumerate}
\end{theorem}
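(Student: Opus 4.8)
The plan is to derive both parts of Theorem~\ref{T2} from the classical trick of two-coloring an Euler circuit by alternating the two colors; the bulk of the work is then a careful accounting of parities and of where the unavoidable ``defect'' lands.

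First I would record the following base fact. Let $H$ be a connected graph all of whose vertices have even degree, let $m=|E(H)|$, fix a vertex $u$ of $H$, and take an Euler circuit $e_1,e_2,\ldots,e_m$ that starts and ends at $u$; color $e_i$ with color $1$ when $i$ is odd and with color $2$ when $i$ is even. Every vertex $v$ of $H$ is traversed exactly $d(v)/2$ times, and each traversal uses a pair of edges that are consecutive along the circuit. If such a pair is $\{e_i,e_{i+1}\}$ with $1\le i\le m-1$, then its two edges receive different colors and so contribute one edge of each color to $v$; the only pair that can be monochromatic is $\{e_m,e_1\}$, which occurs at $u$, and this happens exactly when $m$ is odd, in which case both edges have color $1$. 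Consequently, if $m$ is even then every vertex of $H$ is incident with exactly $d(v)/2$ edges of color $1$ and $d(v)/2$ of color $2$; if $m$ is odd then the same holds for every $v\neq u$, while $u$ is incident with $d(u)/2+1$ edges of color $1$ and $d(u)/2-1$ edges of color $2$.

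Part~2 is then immediate: under its hypotheses $G$ is connected and Eulerian with $|E(G)|$ odd, so the base fact applied to $H=G$ with the prescribed vertex $u$ yields exactly the claimed distribution. For Part~1 I would distinguish two cases. If every vertex of $G$ has even degree, then the standing hypothesis of Part~1 forces $|E(G)|$ to be even, and the base fact applied to $H=G$ gives an exact split of the colors at each vertex, in particular at most $\lceil d(v)/2\rceil$ edges of each color. If $G$ has vertices of odd degree, let $O$ be the set of such vertices; it is nonempty and of even size. Form $G'$ from $G$ by adding one new vertex $w$ adjacent to every vertex of $O$. Then $G'$ is connected, all of its vertices have even degree, $|E(G')|=|E(G)|+|O|$, and $d_{G'}(v)=d_G(v)$ for each vertex $v$ of $G$ not in $O$, while $d_{G'}(v)=d_G(v)+1$ for $v\in O$. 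Apply the base fact to $G'$ with $u=w$: since any monochromatic pair sits at $w$ and $w\notin V(G)$, every vertex $v$ of $G$ is incident in $G'$ with exactly $d_{G'}(v)/2$ edges of each color. Now delete the edges incident with $w$ to recover an edge-coloring of $G$. A vertex $v\notin O$ is unaffected and still has exactly $d_G(v)/2$ edges of each color, whereas a vertex $v\in O$ loses exactly one incident edge, namely $vw$, so it ends with $(d_G(v)+1)/2=\lceil d_G(v)/2\rceil$ edges of one color and one fewer of the other. In every case no vertex of $G$ is incident with more than $\lceil d(v)/2\rceil$ edges of a single color, which is what Part~1 asserts.

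The argument is almost entirely bookkeeping; the one genuinely delicate point is making the unavoidable parity defect harmless --- placing it at the prescribed vertex $u$ in Part~2, and at the auxiliary vertex $w$ in Part~1 so that it never touches an original vertex. I would also check the degenerate instances (a single edge, a single cycle, a graph that is itself one Euler circuit of either parity) to confirm that the hypotheses of the two parts partition exactly the cases intended, and that $G'$ is genuinely connected, which holds because $G$ is connected and $O$ is nonempty.
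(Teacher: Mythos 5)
Your argument is correct. Note that the paper you were asked to compare against does not actually prove Theorem~\ref{T2}; it imports it from Bock et al.~\cite{BKPPRW}, so there is no in-paper proof to measure yours against. Your proof is the standard (and essentially the original) one: add an auxiliary vertex joined to all odd-degree vertices to make the graph Eulerian, alternate two colors along an Euler circuit, and observe that the only possibly monochromatic consecutive pair is the wrap-around pair at the starting vertex, which you place at the prescribed vertex $u$ in Part~2 and at the auxiliary vertex $w$ in Part~1 so that deleting $w$'s edges costs each odd-degree vertex one edge of a single color, landing exactly at the $\left\lceil \d(v)/2\right\rceil$ bound. The parity bookkeeping (even degrees plus the Part~1 hypothesis forcing even size; $|O|$ even so $G'$ is Eulerian; the defect of size one appearing precisely when the circuit has odd length) is all handled correctly, so the proposal stands as a complete and self-contained proof of the cited theorem.
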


From Theorem $\ref{T2}$ we immediately derive that if $G$ has an even size or $G$ contains vertices of odd degrees, then there is a~$\qm$ 2-edge-coloring of $G$. In turn, we see that if $G$ has an odd size, all vertices of $G$ have even degrees and $u$ is any vertex in $G$, then there is a~2-edge-coloring of $G$ such that at any vertex of $G$ distinct from $u$ this edge-coloring is $\qm$, while at $u$ exactly $\frac{\d(u)}{2}+1$ edges have the same color. It is enough to recolor one edge at $u$ with a third color and then the edge-coloring at $u$ is also $\qm$. From this we get the following important fact that we use in this paper.

\begin{corollary}\label{cor:QM_upper_bound}
Every graph $G$ satisfies $\qmi(G)\leq3$.
\end{corollary}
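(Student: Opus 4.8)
The corollary is essentially a bookkeeping consequence of Theorem \ref{T2}, so the plan is to reduce to connected graphs and then split into the two cases dictated by that theorem. First I would note that $\qm$-colorings can be handled component by component: an edge-coloring of $G$ is $\qm$ if and only if its restriction to each connected component is $\qm$, and a color palette of size $3$ suffices globally as soon as it suffices on every component. Hence it is enough to prove $\qmi(H)\le 3$ for every connected graph $H$. (Isolated vertices are vacuous, and a component that is a single edge $K_2$ is colored with one color, since $\lceil d(v)/2\rceil = 1$ there; so no genuine difficulty arises from small components.)

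Now fix a connected graph $H$. If $H$ has an even number of edges, or $H$ has at least one vertex of odd degree, then part~(1) of Theorem \ref{T2} gives a $2$-edge-coloring of $H$ in which every vertex $v$ is incident to at most $\lceil d(v)/2\rceil$ edges of each color; this is by definition a $\qm$ $2$-edge-coloring, so $\qmi(H)\le 2\le 3$. In the remaining case $H$ has an odd number of edges and every vertex of $H$ has even degree. Pick any vertex $u$ of $H$ and apply part~(2) of Theorem \ref{T2}: we obtain a $2$-edge-coloring, say with colors $1$ and $2$, such that every vertex $v\neq u$ is incident to exactly $d(v)/2$ edges of each color (which is $\le \lceil d(v)/2\rceil$, so $\qm$ at $v$), while $u$ is incident to exactly $\frac{d(u)}{2}+1$ edges of one color, say color $1$. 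The coloring fails to be $\qm$ only at $u$, and only by a single edge.

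To repair this, recolor one edge $e$ incident to $u$ that currently has color $1$, giving it the new color $3$. After this change $u$ is incident to $\frac{d(u)}{2}$ edges of color $1$, and at most $\frac{d(u)}{2}$ edges of color $2$, and one edge of color $3$; since $\lceil d(u)/2\rceil = d(u)/2$ here, the coloring is now $\qm$ at $u$. At the other endpoint $w$ of $e$, recoloring one edge from $1$ to $3$ cannot increase the multiplicity of any color beyond what it was for color $1$ before, so $w$ still satisfies the $\qm$ bound; no other vertex is affected. Thus $H$ admits a $\qm$ $3$-edge-coloring, giving $\qmi(H)\le 3$. Combining the two cases and reassembling the components yields $\qmi(G)\le 3$ for every graph $G$.

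There is no serious obstacle here; the only point requiring a little care is the last one, namely checking that recoloring the single edge $e$ at $u$ does not break the $\qm$ condition at the far endpoint $w$ — and this is immediate because we only ever move an edge away from the most-used color at $u$ onto a brand-new color that is used nowhere else, so no color count anywhere can exceed its previous value except the count of color $3$, which rises only to $1$.
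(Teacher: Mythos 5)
Your proposal is correct and follows essentially the same route as the paper: apply Theorem \ref{T2}, and in the exceptional case (odd size, all degrees even) recolor a single edge at the exceptional vertex $u$ with a third color. The extra care you take at the far endpoint $w$ and with the component-wise reduction is fine but routine; the paper treats these points implicitly.
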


Another consequence of Theorem $\ref{T2}$ can be easily justified.

\begin{proposition}\label{thm:QM_equal_two}
$G$ has no quasi-majority $2$-edge-coloring if and only if $G$ has an odd number of edges and all vertices of $G$ have even degrees. 
\end{proposition}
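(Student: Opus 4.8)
The plan is to prove the two implications of the biconditional separately; the forward one is immediate from what has already been recorded, so the real work lies in the backward implication, which rests on a short parity argument.

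For the backward direction, suppose $|E(G)|$ is odd and every vertex of $G$ has even degree, and assume for contradiction that $c\colon E(G)\to[2]$ is a quasi-majority $2$-edge-coloring. Since $\d(v)$ is even for every vertex $v$, we have $\lceil \d(v)/2\rceil=\d(v)/2$, so $v$ is incident to at most $\d(v)/2$ edges of color $1$ and at most $\d(v)/2$ edges of color $2$; as these two counts sum to $\d(v)$, each of them is exactly $\d(v)/2$. Let $H$ be the subgraph of $G$ consisting of the edges colored $1$. Then $\d_{H}(v)=\d(v)/2$ for every $v$, and summing over all vertices gives
\begin{eqnarray*}
2|E(H)|=\sum_{v\in V(G)}\d_{H}(v)=\frac{1}{2}\sum_{v\in V(G)}\d(v)=|E(G)|,
\end{eqnarray*}
so $|E(G)|$ would be even, a contradiction. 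Hence $G$ has no quasi-majority $2$-edge-coloring.

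For the forward direction I argue by contraposition: if it is not the case that $G$ has an odd number of edges and all vertices of even degree, then $|E(G)|$ is even or $G$ has a vertex of odd degree, and in either case the consequence of Theorem~\ref{T2} recorded just after its statement (every graph with even size or with a vertex of odd degree admits a quasi-majority $2$-edge-coloring) provides the required coloring. I do not anticipate a serious obstacle: the only delicate point is the one already exploited above, namely that even degrees upgrade the quasi-majority bound at each vertex from an inequality to an exact equality, after which a handshake-type count on the color-$1$ subgraph pins down the parity of $|E(G)|$.
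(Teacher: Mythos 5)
Your proposal is correct and follows essentially the route the paper intends: the paper states this proposition without an explicit proof, as an easy consequence of Theorem~\ref{T2}, and your forward direction is exactly the recorded corollary of its first item, while your handshake-type parity count on the color-$1$ subgraph is the natural justification of the backward direction. The only caveat, which you share with the paper's own formulation, is that Theorem~\ref{T2} is stated for connected graphs, so the forward implication should really be read componentwise (e.g.\ two disjoint triangles have even size yet admit no quasi-majority $2$-edge-coloring).
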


Observe that there does not exist a~bipartite graph with an odd size where all the vertices have even degrees. Therefore, the following corollary is true.

\begin{corollary} \label{C1}
For every bipartite graph $G$ with $\Delta(G)\ge 2$ we have $\qmi(G)= 2$.
\end{corollary}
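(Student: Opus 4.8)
The plan is to obtain $\qmi(G)=2$ by proving the two matching inequalities separately, the lower bound from the degree hypothesis and the upper bound from Proposition~\ref{thm:QM_equal_two}.

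For the lower bound, I would first observe that no graph with $\Delta(G)\ge 2$ admits a quasi-majority $1$-edge-coloring: if $v$ is a vertex with $\d(v)\ge 2$, then in the unique $1$-edge-coloring all $\d(v)$ edges incident to $v$ carry the same color, and $\d(v)>\left\lceil \d(v)/2\right\rceil$ because $\left\lceil \d(v)/2\right\rceil\le \d(v)-1$ whenever $\d(v)\ge 2$. Hence the coloring is not quasi-majority at $v$, and therefore $\qmi(G)\ge 2$ for every such $G$, bipartite or not.

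For the upper bound, I would invoke Proposition~\ref{thm:QM_equal_two}: a graph fails to admit a quasi-majority $2$-edge-coloring exactly when it has an odd number of edges and all of its vertices have even degree. So it suffices to show that a bipartite graph cannot meet both conditions at once. If $(A,B)$ is a bipartition of $G$, then every edge has precisely one endpoint in $A$, so $|E(G)|=\sum_{v\in A}\d(v)$; if every degree is even, this sum is even, contradicting the size being odd. Consequently every bipartite $G$ has a quasi-majority $2$-edge-coloring, i.e.\ $\qmi(G)\le 2$. Combining the two bounds yields $\qmi(G)=2$.

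Since the argument reduces immediately to Proposition~\ref{thm:QM_equal_two}, there is no genuine obstacle here; the only points requiring a moment's care are the elementary parity identity $|E(G)|=\sum_{v\in A}\d(v)$ that excludes the exceptional case for bipartite graphs, and the trivial inequality $\left\lceil \d(v)/2\right\rceil<\d(v)$ for $\d(v)\ge 2$ that forces at least two colors.
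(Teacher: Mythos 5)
Your proof is correct and follows essentially the same route as the paper: the upper bound comes from Proposition~\ref{thm:QM_equal_two} together with the observation that a bipartite graph cannot have odd size while all degrees are even (you spell out the parity identity $|E(G)|=\sum_{v\in A}\d(v)$, which the paper leaves implicit), and the lower bound from $\Delta(G)\ge 2$ is the same trivial remark the paper takes for granted.
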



\section{Definition and basic properties} \label{sec:basic properties}

A~$k$-edge-coloring of a~graph $G$~is termed {\it{quasi-majority neighbor sum distinguishing}} ($\qmnsd$ for short) if it is quasi-majority and neighbor sum distinguishing. The minimum value of $k$~for which there exists a~$\qmnsd$ $k$-edge-coloring of a~graph $G$ is called the {\it{quasi-majority neighbor sum distinguishing index}} and is denoted by $\qmnsdi(G)$. Observe that only nice graphs admit a $\qmnsd$ edge-coloring, and there is no graph with the $\qmnsd$ index equal to 1, so $\qmnsdi(G)\ge 2$ for every nice graph $G$.

Let $G$ be a~nice graph. It is easy to see that
\begin{eqnarray*}
\nsdi(G)\leq \qmnsdi(G)  \leq \chi'_{\sum}(G). 
\end{eqnarray*} 

\noindent The inequality $\qmnsdi(G) \leq \chi'_{\sum}(G)$ is sharp, e.g. for every graph $G$ with $\Delta(G)=2$. This implies that if a nice graph $G$ is a~path or a cycle, then $\qmnsdi(G) = \chi'_{\sum}(G)$, so according to the propositions included in \cite{FMPSW}, the following two propositions are true. 

\begin{proposition} \label{prop:path}
We have $\qmnsdi(P_{3})=2$, and for every $n \geq 4$ we have $\qmnsdi(P_{n})=3$.
\end{proposition}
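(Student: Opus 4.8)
The plan is to exploit the fact that $P_3$ and every $P_n$ have maximum degree $2$, so the quasi-majority condition collapses to a rigid local constraint: at an internal vertex $v$ we have $\left\lceil \d(v)/2\right\rceil = 1$, hence the two edges incident with $v$ must receive distinct colors. Thus a $\qm$ edge-coloring of a path is exactly a proper edge-coloring of it, and since $\chi'(P_n)\le 2$ one may describe colorings by the sequence of colors along the edges $e_1,\dots,e_{n-1}$, where $e_i=v_iv_{i+1}$. Writing out $\smc$, the neighbor sum distinguishing condition $\smc(v_i)\neq\smc(v_{i+1})$ becomes $c(e_{i-1})\neq c(e_{i+1})$ for $2\le i\le n-2$, while the two conditions at the ends, $\smc(v_1)\neq\smc(v_2)$ and $\smc(v_{n-1})\neq\smc(v_n)$, are automatic since colors are positive. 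So a $\qmnsd$ edge-coloring of a path is precisely an edge-coloring in which no two edges at distance $\le 2$ along the path share a color.

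For $P_3$: the only proper $2$-edge-coloring assigns distinct colors to $e_1,e_2$, and a direct check ($\smc(v_1),\smc(v_2),\smc(v_3)$ equal to, say, $1,3,2$) shows it is neighbor sum distinguishing. Since no nice graph has $\qmnsd$ index $1$, this yields $\qmnsdi(P_3)=2$.

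For $P_n$ with $n\ge 4$: first I would rule out two colors. A proper $2$-edge-coloring of a path is forced (up to swapping the two colors) to alternate $1,2,1,2,\dots$, so every internal vertex gets sum $3$; as $v_2$ and $v_3$ are adjacent internal vertices, the induced vertex-coloring fails to be proper, whence $\qmnsdi(P_n)\ge 3$. For the matching upper bound I would exhibit the explicit coloring $c(e_i)\equiv i\pmod 3$ with colors in $\{1,2,3\}$: consecutive edges get different colors (so it is $\qm$), and the nontrivial distinguishing requirements $c(e_{i-1})\neq c(e_{i+1})$ hold because any three consecutive edges receive the three colors $1,2,3$ in some order. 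Hence $\qmnsdi(P_n)=3$ for $n\ge 4$.

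Alternatively, and more briefly, both equalities follow at once from the remark made just above the proposition, namely that $\qmnsdi(G)=\chi'_{\sum}(G)$ whenever $\Delta(G)=2$, combined with the known values of $\chi'_{\sum}(P_n)$ from \cite{FMPSW}. There is no genuine obstacle in this proposition; the only point needing a little care in the self-contained argument is verifying that the period-$3$ pattern does not produce an accidental sum collision near the two endpoints of the path, which is exactly why the reduction of the NSD conditions to the ``distance-$\le 2$'' condition is worth recording explicitly.
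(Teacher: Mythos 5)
Your proof is correct. In fact your closing ``alternative'' is precisely the paper's argument: the paper gives no standalone proof of this proposition, but deduces it from the observation (made just before the statement) that $\qmnsdi(G)=\chi'_{\sum}(G)$ whenever $\Delta(G)=2$, and then quotes the known values of $\chi'_{\sum}(P_n)$ from \cite{FMPSW}. Your main argument is a genuinely self-contained alternative: you note that for paths the quasi-majority constraint at a degree-$2$ vertex forces adjacent edges to differ, reduce the NSD conditions to $c(e_{i-1})\neq c(e_{i+1})$ at internal vertices (the endpoint conditions being automatic because colors are positive), rule out two colors for $n\ge 4$ via the forced alternation giving two adjacent internal vertices of sum $3$, and exhibit the period-$3$ pattern $c(e_i)\equiv i \pmod 3$ for the upper bound; all of these steps check out. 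What the paper's route buys is brevity and consistency with the cycle case; what yours buys is independence from \cite{FMPSW} and an explicit reformulation (no repeated color among edges within distance two along the path) that makes the verification of both bounds immediate.
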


\begin{proposition} \label{prop:cycle}
We have $\qmnsdi(C_{5})=5$, and for every $n\geq3$ and $n\neq5$ we have  
$$\qmnsdi\left(C_{n}\right) = \left\{ \begin{array}{ll}
3, & \textrm{ if $n\equiv0\!\!\!\pmod{3}$,}\\
4, & \textrm{ otherwise.}
\end{array} \right.$$
\end{proposition}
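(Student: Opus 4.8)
The plan is to leverage the observation, already recorded in the excerpt, that for a path or cycle the quasi-majority constraint is essentially free at every degree-$2$ vertex, so that $\qmnsdi(C_n) = \chi'_{\sum}(C_n)$, and then invoke the known values of $\chi'_{\sum}(C_n)$ from Flandrin, Marczyk, Przyby\l{}o, Sacl\'e, and Wo\'zniak \cite{FMPSW}. Concretely, I would first justify the equality $\qmnsdi(C_n)=\chi'_{\sum}(C_n)$ for all $n\ge 3$: every vertex of $C_n$ has degree $2$, so $\lceil d(v)/2 \rceil = 1$, meaning a coloring is quasi-majority at $v$ exactly when the two edges at $v$ receive different colors, i.e. exactly when the coloring is proper. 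Hence an edge-coloring of $C_n$ is $\qm$ if and only if it is a proper edge-coloring, and a $\qmnsd$ $k$-edge-coloring of $C_n$ is precisely a neighbor sum distinguishing proper $k$-edge-coloring of $C_n$. This gives $\qmnsdi(C_n)=\chi'_{\sum}(C_n)$.

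It then remains to cite the value of $\chi'_{\sum}(C_n)$. The relevant statement from \cite{FMPSW} is that $\chi'_{\sum}(C_5)=5$, that $\chi'_{\sum}(C_n)=3$ when $n\equiv 0\pmod 3$, and that $\chi'_{\sum}(C_n)=4$ for all other $n\ne 5$. Substituting into the equality from the previous paragraph yields the proposition exactly as stated. For completeness I would also recall, in one or two sentences, why these values of $\chi'_{\sum}(C_n)$ hold, so the argument is self-contained modulo \cite{FMPSW}: a proper edge-coloring of $C_n$ with $3$ colors exists iff $3\mid n$, and in that case the cyclic pattern $1,2,3,1,2,3,\dots$ gives every vertex sum $\sigma_c=3$, which is constant but proper since $C_n$ is bipartite-free only matters for adjacency — actually on $C_n$ with $3\mid n$ one checks $1,1,2,1,1,2,\dots$ is improper, so one must use a genuinely distinguishing $3$-pattern, and this is exactly the content of the $n\equiv 0 \pmod 3$ case of \cite{FMPSW}. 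When $3\nmid n$ and $n\ne 5$, three colors are impossible (either no proper $3$-edge-coloring, or no distinguishing one), and four suffice; $C_5$ is the single exception requiring $5$.

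The main obstacle here is not really a mathematical difficulty but a bookkeeping one: one must be careful that the cited results of \cite{FMPSW} are phrased in terms of $\chi'_{\sum}$ rather than some variant (e.g. the non-proper neighbor-distinguishing index), and that the degenerate small cases $n=3,4$ are covered — for $n=3$ we have $C_3=K_3$ with $\chi'_{\sum}(K_3)=3$ since $3\nmid$ is false, i.e. $3\mid 3$, giving $3$; for $n=4$ we get $4$. Since the preceding text of the paper has already asserted $\qmnsdi(G)=\chi'_{\sum}(G)$ for paths and cycles and pointed to \cite{FMPSW}, the proof can be quite short: state the degree-$2$ observation, conclude $\qm$ $\Leftrightarrow$ proper on $C_n$, and read off the values. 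I would write roughly: ``Since every vertex of $C_n$ has degree $2$, a $k$-edge-coloring of $C_n$ is quasi-majority if and only if it is proper. Hence $\qmnsdi(C_n)=\chi'_{\sum}(C_n)$, and the claimed values follow from the corresponding results on $\chi'_{\sum}(C_n)$ in \cite{FMPSW}.''
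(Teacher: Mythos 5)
Your proposal is correct and takes exactly the route the paper itself uses: since every vertex of $C_n$ has degree $2$, $\left\lceil d(v)/2\right\rceil=1$, so a quasi-majority edge-coloring of $C_n$ is the same as a proper one, giving $\qmnsdi(C_n)=\chi'_{\sum}(C_n)$, and the stated values are then read off from \cite{FMPSW}. Your parenthetical attempt to re-derive the values of $\chi'_{\sum}(C_n)$ is muddled (the cyclic pattern $1,2,3,1,2,3,\dots$ gives sums $3,5,4$ repeating, not a constant sum), but it is inessential since, like the paper, you ultimately just cite \cite{FMPSW}.
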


Furthermore, in \cite{DDPS} subcubic graphs were considered and the following result has been proven. 

\begin{theorem}{\rm (Dailly et al. \cite{DDPS})}\label{thm:subcubic_relaxed}
		If $G$ is a~nice subcubic graph with no component isomorphic to $C_5$, then it admits an $\nsd$ $4$-edge-coloring such that every vertex of degree at least $2$ is incident to at least two edges of different colors.
	\end{theorem}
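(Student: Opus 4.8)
The plan is to prove the statement by induction on $|E(G)|$, working with a minimal counterexample $G$, which we may assume to be connected. First note that on a subcubic graph an edge-coloring is quasi-majority in the sense of Section~\ref{sec:quasi-majority} if and only if every vertex of degree at least $2$ is incident to two differently colored edges, so what has to be proved is exactly that $\qmnsdi(G)\le 4$. If $\Delta(G)\le 2$, then $G$ is a path or a cycle different from $C_5$, and the bound follows from Propositions~\ref{prop:path} and~\ref{prop:cycle}. I would also dispose directly of a short finite list of small connected subcubic graphs --- such as $C_5$ with one pendant edge, $K_4$, $K_4$ minus an edge, the triangular prism, and whatever else turns out to be needed so that the reductions below always apply --- and from then on take $G$ connected, with $\Delta(G)=3$ and $|E(G)|$ larger than all of these exceptional graphs.

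The first reduction removes a leaf. Let $v$ be a vertex of degree $1$ with neighbor $u$, and put $G'=G-v$; apart from finitely many exceptional graphs (those for which $G'$ would be $C_5$ or would acquire a $K_2$-component, all of which were settled above), $G'$ is again a nice subcubic graph with no $C_5$-component, so by induction it carries a coloring $c'$ of the required type, and we extend it by giving $uv$ a color $t$. This affects only $\sigma_c(u)$ and $\sigma_c(v)=t$. Since $u$ has at most two neighbors other than $v$, distinguishing $u$ from them forbids at most two values of $t$, and keeping $u$ non-monochromatic forbids at most one more; the edge $uv$ itself is automatically distinguished, because $\sigma_c(v)=t\neq\sigma_c(u)$ as $u$ has positive sum in $G'$. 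Hence one of the four colors is admissible. A companion reduction handles long threads: if $G$ contains a maximal path of degree-$2$ vertices with sufficiently many internal vertices, delete a middle segment, color the smaller graph inductively, and re-insert the segment, coloring it greedily along the path and using the flexibility of $4$-colorings of paths to repair any conflict at the two junctions.

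The case that remains is that $G$ is connected and subcubic with minimum degree $2$ and only short threads --- in particular this includes every connected cubic graph --- and this is where the real difficulty lies. Here I would delete a non-bridge edge $e=uv$ (one exists because a graph of minimum degree $2$ is not a tree), color $G-e$ by induction, and try to color $e$ with some $t$. Two things make this hard. First, $u$ and $v$ now each carry two further neighbors, so distinguishing both endpoints from those four neighbors can forbid up to four values of $t$. Second, and worse, $u$ and $v$ are adjacent in $G$ while both of their sums increase by $t$, so $\sigma_c(u)-\sigma_c(v)$ does not depend on $t$, and the edge $uv$ cannot be repaired at all when the inductive coloring happens to satisfy $\sigma_{c'}(u)=\sigma_{c'}(v)$. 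The key --- and the main obstacle --- is to rule this out: I would first recolor one further edge at $u$ or $v$ (its other endpoint again has only two other neighbors, so such a local change has bounded cost) so as to break the equality $\sigma(u)=\sigma(v)$ and at the same time free up enough colors, and then run a case analysis on the local structure around $e$ (whether $u$ and $v$ have a common neighbor, whether $e$ lies on a triangle, and so on). Engineering a clean exhaustive case analysis here, and choosing the reducible configuration for the cubic case so that one always exists, is the crux; the remaining verifications are routine.
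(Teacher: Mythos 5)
First, note that the paper does not prove this statement at all: it is quoted from Dailly, Duch\'ene, Parreau and Sidorowicz \cite{DDPS} and used as a black box to derive Proposition~\ref{thm:subcubic}, so there is no in-paper argument to compare yours against; your proposal has to stand on its own as a proof.

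As such, it has a genuine gap exactly where you yourself locate ``the crux.'' The easy parts are fine: on a subcubic graph quasi-majority is indeed equivalent to every vertex of degree at least $2$ seeing two colors, the $\Delta\le 2$ case follows from Propositions~\ref{prop:path} and~\ref{prop:cycle}, and the leaf reduction is sound (at most $2+1$ forbidden colors for $uv$, and $\sigma_c(v)=t<\sigma_c(u)$ automatically). But the whole difficulty of the theorem sits in the remaining case of minimum degree $2$ (in particular cubic graphs), and there your argument stops at a plan rather than a proof. Deleting a non-bridge edge $e=uv$ and choosing its color $t$ faces, as you note, up to four forbidden values of $t$ among only four colors, and the fatal obstruction that $\sigma(u)-\sigma(v)$ is independent of $t$, so nothing works when the inductive coloring gives $\sigma_{c'}(u)=\sigma_{c'}(v)$. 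Your proposed fix --- ``recolor one further edge at $u$ or $v$ to break the equality and free up enough colors, then run a case analysis'' --- is precisely the nontrivial content: such a recoloring changes the sum at a third vertex $x$, which must stay distinguished from its own neighbors and keep the coloring quasi-majority, and it also changes the forbidden set for $t$; there is no argument that a suitable recoloring always exists, nor is the promised exhaustive case analysis (triangles, common neighbors, the cubic case, the graphs for which $G-e$ or $G-v$ is $C_5$) carried out. The companion ``long thread'' reduction is similarly only asserted (``repair any conflict at the two junctions''). Since the statement is exactly the hard subcubic case of a relaxed NSD result, and the known proof in \cite{DDPS} requires a substantial structural analysis, acknowledging the obstacle without resolving it leaves the proposal incomplete: the main case is missing, not merely routine.
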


If every vertex of degree at least 2 in a subcubic graph is incident to at least two edges of different colors, then the edge-coloring is quasi-majority at every vertex and so the edge-coloring is quasi-majority. Thus, we obtain the following.

\begin{proposition}\label{thm:subcubic}
If $G$ is a~nice subcubic graph with no component isomorphic to $C_5$, then $\qmnsdi(G)\leq4$.
\end{proposition}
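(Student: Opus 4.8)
The plan is to deduce this directly from Theorem~\ref{thm:subcubic_relaxed}, exactly as the preceding paragraph of the paper anticipates. Given a nice subcubic graph $G$ with no component isomorphic to $C_5$, that theorem supplies an $\nsd$ $4$-edge-coloring $c$ in which every vertex of degree at least $2$ is incident to at least two edges of distinct colors. It therefore suffices to check that such a coloring $c$ is automatically quasi-majority at every vertex, since then $c$ is simultaneously quasi-majority and neighbor sum distinguishing, i.e.\ a $\qmnsd$ $4$-edge-coloring, whence $\qmnsdi(G)\le 4$.

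To verify the quasi-majority condition I would simply run through the possible degrees in a subcubic graph. If $\d(v)\le 1$, the condition is vacuous, since $v$ is incident to at most one edge and $\left\lceil \d(v)/2\right\rceil\ge 1$. If $\d(v)=2$, then by the stated property of $c$ the two edges at $v$ carry different colors, so each color occurs on at most $1=\left\lceil 2/2\right\rceil$ of the edges incident to $v$. If $\d(v)=3$, then again at least two colors appear among the three edges at $v$, hence no single color appears on all three of them, so each color occurs on at most $2=\left\lceil 3/2\right\rceil$ of the edges incident to $v$. In every case $c$ is quasi-majority at $v$, so $c$ is quasi-majority on $G$.

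Combining the two observations, $c$ is a quasi-majority neighbor sum distinguishing $4$-edge-coloring of $G$, and therefore $\qmnsdi(G)\le 4$. I expect no genuine obstacle here: the entire difficulty is already absorbed into Theorem~\ref{thm:subcubic_relaxed}, and what remains is the short case check on $\d(v)\in\{0,1,2,3\}$ above, which shows that "incident to at least two edges of different colors at each vertex of degree $\ge 2$'' is, for subcubic graphs, exactly the quasi-majority property.
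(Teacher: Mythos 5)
Your proposal is correct and follows exactly the paper's route: the proposition is obtained as an immediate consequence of Theorem~\ref{thm:subcubic_relaxed}, noting that for subcubic graphs the condition ``at least two colors at every vertex of degree at least $2$'' coincides with the quasi-majority property. Your degree-by-degree check ($\d(v)\in\{0,1,2,3\}$) just spells out the short observation the paper makes in the paragraph preceding the proposition.
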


The upper bound in Theorem \ref{thm:subcubic} is sharp, since cycles $C_n$ for $n\equiv 1,2 \pmod{3}$ require 4 colors for a $\qmnsd$ edge-coloring.

\begin{proposition}\label{prop:qm two coloring}
   Let $G$ be a nice graph without two adjacent vertices of the same degree. If $\qmi(G)=2$, then $\qmnsdi(G)=2$.
\end{proposition}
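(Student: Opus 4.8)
The plan is to prove the stronger statement that \emph{every} quasi-majority $2$-edge-coloring of such a graph $G$ is automatically neighbor sum distinguishing. The proposition then follows at once: the hypothesis $\qmi(G)=2$ provides a quasi-majority $2$-edge-coloring, which by the stronger statement is also neighbor sum distinguishing, so $\qmnsdi(G)\le 2$; and $\qmnsdi(G)\ge 2$ holds for every nice graph.

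First I would analyse the induced vertex colouring under an arbitrary $c\colon E(G)\to\{1,2\}$. If a vertex $v$ is incident to $b_v$ edges coloured $2$ and $\d(v)-b_v$ edges coloured $1$, then $\smc(v)=(\d(v)-b_v)+2b_v=\d(v)+b_v$. The quasi-majority condition at $v$ bounds the number of edges of each colour by $\lceil\d(v)/2\rceil$, which forces $\lfloor\d(v)/2\rfloor\le b_v\le\lceil\d(v)/2\rceil$ and hence
\[
\smc(v)\in\bigl\{\,\lfloor 3\d(v)/2\rfloor,\ \lceil 3\d(v)/2\rceil\,\bigr\}.
\]
This is a single value, $3\d(v)/2$, when $\d(v)$ is even, and a pair of consecutive integers, $(3\d(v)\!-\!1)/2$ and $(3\d(v)\!+\!1)/2$, when $\d(v)$ is odd.

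Next I would observe that for integers $0\le d<d'$ one has $\lceil 3d/2\rceil<\lfloor 3d'/2\rfloor$; it suffices to check the tight case $d'=d+1$, where a one-line parity split shows the difference is exactly $1$, and larger gaps between $d$ and $d'$ only increase the separation. Applying this with $d=\d(u)$ and $d'=\d(v)$ (WLOG $\d(u)<\d(v)$) for an arbitrary edge $uv$ of $G$: by hypothesis $\d(u)\ne\d(v)$, so the two-element (or one-element) sets of possible values of $\smc(u)$ and $\smc(v)$ are separated, giving $\smc(u)<\smc(v)$. Degenerate degrees need no special care — an isolated vertex is adjacent to nothing, and a degree-$1$ vertex is, by niceness together with the degree hypothesis, adjacent only to vertices of degree $\ge 2$, which the displayed bound already separates it from. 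Thus $c$ is neighbor sum distinguishing, completing the proof.

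I do not expect a real obstacle here: all the substance is in the elementary remark that a quasi-majority $2$-edge-coloring confines $\smc(v)$ to a window of width at most one centred at $3\d(v)/2$, after which the no-two-adjacent-equal-degrees hypothesis finishes the argument. The only place to be careful is the parity case analysis for $\lceil 3d/2\rceil<\lfloor 3(d+1)/2\rfloor$, since overlooking the $d'=d+1$ case would leave an apparent hole.
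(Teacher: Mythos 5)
Your proof is correct and takes essentially the same route as the paper: both arguments observe that under a $\qm$ $2$-edge-coloring a vertex of degree $2k$ must have $\smc(v)=3k$ while a vertex of degree $2k+1$ has $\smc(v)\in\{3k+1,3k+2\}$, so equal sums force equal degrees, and the no-adjacent-equal-degrees hypothesis concludes. Your explicit separation lemma $\lceil 3d/2\rceil<\lfloor 3d'/2\rfloor$ for $d<d'$ is merely a more detailed rendering of the paper's remark that $\smc(v)=\smc(w)$ only if $\d(v)=\d(w)$.
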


\begin{proof}
   Let $c$ be a $\qm$ 2-edge-coloring of $G$. If a vertex $v$ has even degree $d(v) = 2k$, then $\sigma_c(v) = 3k$. If a vertex has odd degree $d(v) = 2k + 1$, then $\sigma_c(v)$ is either $3k + 1$ or $3k + 2$. Therefore, $\smc(v) = \smc(w)$ only if $d(v) = d(w)$. Since no two adjacent vertices have the same degree, the coloring $c$ distinguishes adjacent vertices.
\end{proof}

We can also use interval colorings to find a $\qmnsd$ edge-coloring. A $k$-edge-coloring of a graph $G$ is called an {\it interval coloring} if the colors of the edges incident to each vertex of $G$ are distinct and form an interval of consecutive integers.

\begin{proposition}
   Let $G$ be a nice graph without two adjacent vertices of the same degree. If $G$ has an interval coloring, then $\qmnsdi(G)=2$.
\end{proposition}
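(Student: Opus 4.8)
The plan is to take the given interval coloring $\varphi$, reduce it modulo $2$ to obtain a $2$-edge-coloring $c$, and then verify that $c$ is simultaneously quasi-majority and neighbor sum distinguishing; together with the general bound $\qmnsdi(G)\ge 2$ for nice graphs this yields $\qmnsdi(G)=2$.

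Concretely, I would set $c(e)=1$ when $\varphi(e)$ is odd and $c(e)=2$ when $\varphi(e)$ is even. The first step is to check the quasi-majority condition at an arbitrary vertex $v$: by the definition of an interval coloring the $d(v)$ edges at $v$ receive, under $\varphi$, the $d(v)$ consecutive integers $a_v, a_v+1, \dots, a_v+d(v)-1$ for some $a_v$; among $d(v)$ consecutive integers exactly $\lceil d(v)/2\rceil$ share one fixed parity and $\lfloor d(v)/2\rfloor$ the other, so at $v$ each of the two colors of $c$ appears at most $\lceil d(v)/2\rceil$ times. Hence $c$ is quasi-majority at every vertex.

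The second step is to analyse $\sigma_c$, and here I would simply reuse the computation from the proof of Proposition~\ref{prop:qm two coloring}: if $d(v)=2k$ then $c$ puts $k$ edges of each color at $v$ and $\sigma_c(v)=3k$; if $d(v)=2k+1$ then the two colors appear $k$ and $k+1$ times (in one order or the other, according to the parity of $a_v$) and $\sigma_c(v)\in\{3k+1,3k+2\}$. The possible values of $\sigma_c(v)$ for the successive degrees $1,2,3,\dots$ are the pairwise disjoint sets $\{1,2\},\{3\},\{4,5\},\{6\},\dots$, so $\sigma_c(u)=\sigma_c(v)$ forces $d(u)=d(v)$. Since $G$ has no two adjacent vertices of equal degree, $c$ distinguishes every pair of adjacent vertices, i.e. $c$ is a $\qmnsd$ $2$-edge-coloring, and the proof is complete.

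I do not expect a genuine obstacle here: the only mildly delicate point is the parity count on a block of $d(v)$ consecutive integers, and everything else is a direct transcription of the argument behind Proposition~\ref{prop:qm two coloring}. It is perhaps worth noting that the interval-coloring hypothesis is used only to force the colors incident to each vertex to be $d(v)$ consecutive integers; any proper edge-coloring with that local ``consecutiveness'' property at every vertex would work equally well.
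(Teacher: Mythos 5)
Your proposal is correct and follows essentially the same route as the paper: reduce the interval coloring modulo $2$, use the fact that the colors at each vertex are $d(v)$ consecutive integers to get $\lceil d(v)/2\rceil$ edges of one parity and $\lfloor d(v)/2\rfloor$ of the other, and conclude that $\sigma_c(v)$ determines $d(v)$, so adjacent vertices of distinct degrees are distinguished. Your added remark that the conclusion only needs local consecutiveness at each vertex is a fine observation but not a departure from the paper's argument.
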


\begin{proof}
   Let $c'$ be an interval coloring of $G$. We cnstruct a new coloring $c$ as follows: $c(e) = 1$ if $c'(e) \equiv 1 \pmod{2}$, and $c(e) = 2$ if $c'(e) \equiv 0 \pmod{2}$ for $e \in E(G)$. Therefore, a vertex $v$ of even degree $d(v) = 2k$ is incident to $k$ edges of color 1 and $k$ edges of color 2, resulting in $\smc(v) = 3k$. If a vertex has odd degree $d(v) = 2k + 1$, it is incident to $k$ edges of color 1 and $k+1$ edges of color 2, or to $k+1$ edges of color 1 and $k$ edges of color 2, so $\smc(v)$ is either $3k+1$ or $3k+2$. Thus, the coloring is quasi-majority at every vertex, and $\smc(v) = \smc(w)$ only if $d(v) = d(w)$.
\end{proof}

 Every bipartite graph $G$ with $|V(G)| \leq 15$ admits an interval coloring, so every bipartite graph $G$ with $|V(G)|\leq 15$ in which there are no two adjacent vertices with the same degree has a quasi-majority neighbor sum distinguishing 2-edge-coloring.   


\section{Special classes of graphs} \label{sec:classes of graphs}

In this section we study the~$\qmnsd$ index of complete graphs, complete bipartite graphs, trees, and graphs with maximum degree at most $4$.

\subsection{Complete  graphs}
To determine the $\qmnsd$ index of complete graphs, we use the following two lemmas.

\begin{lemma}\label{lem:complete_odd}
    Every complete graph $K_{2k+1}$ has a $\qmnsd$ $3$-edge-coloring in which $k$ vertices are incident to  $k-1$ edges of color $2$ and $k+1$ vertices are incident to  $k$ edges of color $2$.
\end{lemma}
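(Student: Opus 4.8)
The plan is to build the coloring by first fixing the colour-$2$ class so that it already has exactly the prescribed degree sequence, and only afterwards worrying about distinguishing sums with colours $1$ and $3$.

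First I would partition $V(K_{2k+1})$ into two parts $A$ and $B$ with $|A|=k$ and $|B|=k+1$, and colour every edge inside $A$ and every edge inside $B$ with colour $2$. Then each vertex of $A$ is incident to exactly $k-1$ edges of colour $2$ and each vertex of $B$ to exactly $k$ such edges, which is precisely the structure demanded by the statement, and this is quasi-majority at colour $2$ since $k-1,k\le\lceil 2k/2\rceil=k$. The still-uncoloured edges are exactly those between $A$ and $B$; they form a copy of $K_{k,k+1}$, and it remains to $2$-colour them with colours $1$ and $3$.

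The key reduction is the identity $\smc(v)=d_1(v)+2d_2(v)+3d_3(v)=2k+d_2(v)+2d_3(v)$, where $d_i(v)$ counts edges of colour $i$ at $v$. Since $d_2$ equals $k-1$ on $A$ and $k$ on $B$, the values of $\smc$ on $A$ all have one fixed parity and on $B$ the opposite parity, so no vertex of $A$ can collide with a vertex of $B$; hence (as $K_{2k+1}$ is complete, so $\nsd$ here just means all sums distinct) it suffices to choose the colour-$3$ bipartite subgraph $H\subseteq K_{k,k+1}$ so that its degrees on $A$ are pairwise distinct and its degrees on $B$ are pairwise distinct. Because $A$-vertices have degree $k+1$ and $B$-vertices degree $k$ in $K_{k,k+1}$, quasi-majority forces every $A$-degree of $H$ into $\{1,\dots,k\}$ and every $B$-degree into $\{0,\dots,k\}$, so in fact we must realise the $A$-degrees as exactly $\{1,\dots,k\}$ and the $B$-degrees as exactly $\{0,1,\dots,k\}$ (consistent, since both degree sums equal $k(k+1)/2$).

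I would then exhibit such an $H$ by a staircase construction: order $A=\{a_1,\dots,a_k\}$ and $B=\{b_1,\dots,b_{k+1}\}$ and let $b_i$ be colour-$3$-adjacent to $a_1,\dots,a_{i-1}$. This gives $d_3(b_i)=i-1$, ranging over $\{0,\dots,k\}$, and $d_3(a_j)=k+1-j$, ranging over $\{1,\dots,k\}$; the complementary colour-$1$ degrees are $j$ on $a_j$ and $k+1-i$ on $b_i$, all at most $k$, so the whole colouring is quasi-majority. A short computation gives $\smc(a_j)=5k+1-2j$ and $\smc(b_i)=3k+2i-2$, which are $2k+1$ pairwise distinct integers, finishing the proof. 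The one genuinely delicate choice is the very first one — taking colour $2$ to be two cliques, rather than, say, a union of near-perfect matchings coming from a proper $(2k+1)$-edge-colouring: the latter looks natural but forces all $k$ vertices of $A$ to receive identical sums, so it cannot be made $\nsd$. Once the clique split is in place, the rest is bookkeeping.
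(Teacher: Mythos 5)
Your proof is correct, and it takes a genuinely different route from the paper. The paper proves the lemma by induction on the order: it decomposes $K_{2k+1}$ into $K_{2k-1}$ plus a spanning subgraph on two new vertices, applies the induction hypothesis to $K_{2k-1}$ (using precisely the color-$2$ degree condition of the lemma to control the extension), and checks that the combined coloring is quasi-majority and distinguishing. You instead give a one-shot explicit construction: color $2$ is the union of two cliques on parts of sizes $k$ and $k+1$, which by itself realizes the required color-$2$ degree sequence, and then the cross bipartite graph $K_{k,k+1}$ is $\{1,3\}$-colored by a staircase so that the color-$3$ degrees are $\{1,\dots,k\}$ on one side and $\{0,\dots,k\}$ on the other. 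Your reduction $\smc(v)=2k+d_2(v)+2d_3(v)$ correctly shows the two sides live on different parities, and the explicit formulas $\smc(a_j)=5k+1-2j$, $\smc(b_i)=3k+2i-2$ give $2k+1$ pairwise distinct sums; the quasi-majority bounds ($d_1,d_2,d_3\le k$ at every vertex) are easily verified, and I checked the small cases $k=1,2$ against your formulas. What each approach buys: the paper's induction keeps this lemma stylistically parallel to the even-order case (Lemma for $K_{2k}$), where the same decomposition is reused, and the color-$2$ degree invariant is exactly what powers the inductive step; your direct construction avoids the induction bookkeeping entirely, yields closed-form sums, and makes the structural claim about color-$2$ degrees immediate by design rather than something to be propagated. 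Your closing observation that a near-$1$-factorization choice for color $2$ would equalize the sums on one side is a fair justification of why the two-clique choice is the right starting point.
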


\begin{proof}
    We prove this by induction on the number of vertices. The lemma is true for $k=1$, that is, for the complete graph $K_3$. Assume that it is true for all complete graphs of odd order with fewer than $2k+1$ vertices. Let $V(K_{2k+1})=\{v_1,v_1,\ldots,v_{2k+1}\}$. We decompose $K_{2k+1}$ into two edge disjoint subgraphs $G_1$ and $G_2$ such that $K_{2k+1}=G_1\cup G_2$. Let $G_1=G[\{v_1,\ldots,v_{2k-1}\}]$ and $G_2$ be a~spanning subgraph of $G$ that contains edges $E(G_2)=\{v_{2k}v_{2k+1}\}\cup \{v_{2k}v_i:i\in \{1,\ldots,2k-1\}\}\cup \{v_{2k+1}v_i:i\in \{1,\ldots,2k-1\}\}$. The subgraph $G_1$ is isomorphic to $K_{2k-1}$, so by the induction hypothesis there is a~$\qmnsd$ 3-edge-coloring such that   $k-1$ vertices are incident to $k-2$ edges in color 2 and $k$ vertices are incident to  $k-1$ edges of color $2$. Let $c_1$ be such a~coloring and $v_1,\ldots, v_{k-1}$ be the vertices with   $k-2$ incident edges in color 2. Let $c_2$ be an edge-coloring of $G_2$ such that 
    \begin{itemize}
        \item $c_2(v_{2k}v_{2k+1})=2$;
        \item $c_2(v_{2k}v_{i})=2$ for $i\in \{1,\ldots,{k-1}\}$;
        \item $c_2(v_{2k+1}v_{i})=2$ for $i\in \{1,\ldots,{k-1}\}$;
        \item $c_2(v_{2k}v_{i})=1$ for $i\in \{k,\ldots,{2k-1}\}$;
        \item $c_2(v_{2k+1}v_{i})=3$ for $i\in \{k,\ldots,{2k-1}\}$.
    \end{itemize}
     
Then, let $c$ be the edge-coloring of $K_{2k+1}$ such that $c(e)=c_i(e)$ if $e\in E(G_i)$ for $i=1,2$. We claim that $c$ is the $\qmnsd$ 3-edge-coloring such that $k$ vertices are incident to $k-1$ edges in color 2 and $k+1$ vertices are incident to  $k$ edges of color $2$.

First, observe that $c$ is a~$\qm$ edge-coloring. By construction, $c_2$ is $\qm$ at $v_{2k}$ and $v_{2k+1}$. The coloring $c_1$ implies that if $v_i \in \{v_1, \ldots, v_{k-1}\}$, then it has $k-2$ edges colored with 2 in $G_1$. So, together with the two edges colored with 2 in $G_2$, every vertex $v_i$ for $i \in \{1, \ldots, k-1\}$ has  $k$ edges colored with 2. Since $c_1$ is a~$\qm$ edge-coloring, every $v_i$ for $i \in \{1, \ldots, k-1\}$ has at most $k$ edges colored with 1 and at most $k$ edges colored with 3. This implies that $c$ is $\qm$ at every vertex $v_i \in \{v_1, \ldots, v_{k-1}\}$. Since every vertex $v_j$, for $j \in \{k, \ldots, 2k-1\}$, is $\qm$ in $(G_1, c_1)$ and we use two different colors at each vertex in $c_2$, it follows that it is also $\qm$ in $(G, c)$. Thus, the edge-coloring $c$ is $\qm$.

Furthermore,   vertices $v_1,\ldots,v_{k-1},v_{2k},v_{2k+1}$ are incident to $k$ edges colored with 2.  Observe that every vertex $v_j$ for $j \in \{k, \ldots, 2k-1\}$ is adjacent to $k-1$ edges colored with 2, since edges incident to $v_j$ in $G_2$ have colors 1 and 3. 

Finally, we show that $c$ is an $\nsd$ edge-coloring. $\smc(v_{2k}) = \sigma_{c_2}(v_{2k}) = k + 2k = 3k$ and $\smc(v_{2k+1}) = \sigma_{c_2}(v_{2k+1}) = 2k + 3k = 5k$, so $\smc(v_{2k}) \neq \smc(v_{2k+1})$. For every vertex $v_i$ where $i \in \{1, \ldots, 2k-1\}$, $\smc(v_i) = \sigma_{c_1}(v_i) + 4$. Thus, since $c_1$ is an $\nsd$ edge-coloring, $\smc(v_i) \neq \smc(v_j)$ for $1 \leq i < j \leq 2k-1$. Observe that $3k - 3 = k - 1 + 2(k - 1) \leq \sigma_{c_1}(v_i) \leq 2(k - 1) + 3(k - 1) = 5k - 5$ for $i \in \{1, \ldots, 2k-1\}$. Thus, $\smc(v_{2k}) \neq \smc(v_i)$ and $\smc(v_{2k+1}) \neq \smc(v_i)$ for $i \in \{1, \ldots, 2k-1\}$. Therefore, $c$ is an $\nsd$ edge-coloring, which completes the proof.    
\end{proof}

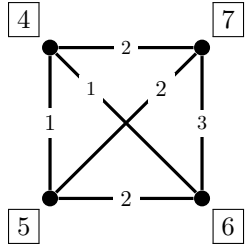
\begin{figure}[h]
\centering
\begin{tikzpicture}
\node[vertex] (0) at (0,0) {};
\node[vertex] (1) at (0,2) {};
\node[vertex] (2) at (2,0) {};
\node[vertex] (3) at (2,2) {};

\draw[edge] (0)to node[fill=white,midway,scale=0.85]{$1$} (1);
\draw[edge] (0)to node[fill=white,midway,scale=0.85]{$2$} (2);
\draw[edge] (0)to node[fill=white,near end,scale=0.85]{$2$} (3);
\draw[edge] (1)to node[fill=white,near start,scale=0.75]{$1$} (2);
\draw[edge] (1)to node[fill=white,midway,scale=0.75]{$2$} (3);
\draw[edge] (2)to node[fill=white,midway,scale=0.75]{$3$} (3);
    
\draw[edge] (0) node[below left] {\fbox{$5$}};
\draw[edge] (1) node[above left] {\fbox{$4$}};
\draw[edge] (2) node[below right] {\fbox{$6$}};
\draw[edge] (3) node[above right] {\fbox{$7$}};
\end{tikzpicture}
\caption{A $\qmnsd$ 3-edge-coloring of $K_4$}
\label{fig}
\end{figure}

\begin{lemma}\label{lem:complete_even}
Let $k\ge 2$. Every complete graph $K_{2k}$ has a $\qmnsd$ $3$-edge-coloring in which at least $k-1$ vertices are incident to at most $k-1$ edges of color $2$. 
\end{lemma}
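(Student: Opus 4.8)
The plan is to prove Lemma~\ref{lem:complete_even} by reducing the even case $K_{2k}$ to the odd case $K_{2k-1}$, for which Lemma~\ref{lem:complete_odd} already gives a very precise $\qmnsd$ $3$-edge-coloring, and then handle the single new vertex by hand. Concretely, write $K_{2k}=G_1\cup G_2$ where $G_1=G[\{v_1,\dots,v_{2k-1}\}]\cong K_{2k-1}$ and $G_2$ is the star centered at $v_{2k}$ with leaves $v_1,\dots,v_{2k-1}$. Color $G_1$ by the coloring $c_1$ from Lemma~\ref{lem:complete_odd}: $k-1$ vertices (say $v_1,\dots,v_{k-1}$) are incident to $k-2$ edges of color $2$ and the remaining $k$ vertices ($v_k,\dots,v_{2k-1}$) are incident to $k-1$ edges of color $2$. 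The base case $k=2$, i.e.\ $K_4$, is illustrated in Figure~\ref{fig} and can be checked directly, so assume $k\ge 3$.

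The core of the argument is choosing the colors on the $2k-1$ star edges at $v_{2k}$ so that (i) the whole coloring stays quasi-majority, (ii) $v_{2k}$ is distinguished from all its neighbors, and (iii) the shifts added to the $\sigma_{c_1}$-values keep all neighbors pairwise distinguished. For the quasi-majority condition at $v_{2k}$, which has degree $2k-1$, we may use at most $k$ edges of any one color; the cleanest choice is to use each of colors $1,2,3$ on roughly $(2k-1)/3$ of the star edges. For the quasi-majority condition at the leaves, note that $v_j$ for $j\in\{k,\dots,2k-1\}$ already has $k-1$ edges of each of colors $1,2,3$ possible but only $k-1$ of color $2$ used, and adds one more edge, so it stays $\qm$ as long as we do not push it to $k+1$ of a single color — which we cannot, since we add only one edge. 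The delicate leaves are $v_1,\dots,v_{k-1}$, which have $k-2$ edges of color $2$ in $G_1$: to keep them $\qm$ (bound $k$ at a vertex of degree $2k-1$) we have total freedom, but to ensure at least $k-1$ of them end up incident to at most $k-1$ edges of color $2$ — the conclusion of the lemma — it suffices to color the star edges to $v_1,\dots,v_{k-1}$ with colors $1$ and $3$ (or $2$, but then we should count). Actually the simplest bookkeeping: give the edges $v_{2k}v_i$ for $i\in\{1,\dots,k-1\}$ the color $1$; then each such $v_i$ keeps exactly $k-2\le k-1$ edges of color $2$, giving $k-1$ vertices incident to at most $k-1$ edges of color $2$, and the remaining $k$ star edges $v_{2k}v_i$, $i\in\{k,\dots,2k-1\}$, are split between colors $2$ and $3$ (say $\lceil k/2\rceil$ of color $3$ and $\lfloor k/2\rfloor$ of color $2$, or whatever balances $v_{2k}$).

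For the neighbor-sum-distinguishing part, each leaf $v_i$ gets $\sigma_c(v_i)=\sigma_{c_1}(v_i)+\varepsilon_i$ where $\varepsilon_i\in\{1,2,3\}$ is the color of $v_{2k}v_i$; since by Lemma~\ref{lem:complete_odd}'s proof the $\sigma_{c_1}$-values already satisfy $3k-3\le\sigma_{c_1}(v_i)\le 5k-5$ and are pairwise distinct along the edges of $K_{2k-1}$, a single global shift would preserve distinguishing, but here different leaves get different shifts, so one must check: if $\sigma_{c_1}(v_i)+\varepsilon_i=\sigma_{c_1}(v_j)+\varepsilon_j$ then $|\sigma_{c_1}(v_i)-\sigma_{c_1}(v_j)|\le 2$. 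The expected main obstacle is exactly this: two adjacent leaves whose $c_1$-sums differ by $1$ or $2$ could collide after receiving different star colors. The fix is to use the extra freedom in the odd-case coloring, or — cleaner — to observe that we are free to reassign which leaves get color $1$ versus $\{2,3\}$ on their star edge, and to pick a bijection so that collisions are avoided; since adjacent leaves have distinct $\sigma_{c_1}$, and each value is hit by few leaves, a short case analysis (or a greedy ordering of leaves by $\sigma_{c_1}$-value) resolves it. Finally one checks $\sigma_c(v_{2k})$: it equals $\sum_i\varepsilon_i$, a value near $\tfrac{3}{2}(2k-1)=3k-\tfrac32$, which lies strictly below $3k-3+1$ is false, so one must verify $\sigma_c(v_{2k})$ is not equal to any $\sigma_c(v_i)\in[3k-2,5k-2]$; choosing the star-edge colors to make $\sigma_c(v_{2k})$ either small (all light colors) or large (all heavy colors) moves it out of that range for $k$ large, and small $k$ are checked directly. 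This last step — placing $v_{2k}$'s sum outside the band occupied by the leaves while keeping everything $\qm$ — together with the adjacent-leaf collision analysis, is where the real work lies.
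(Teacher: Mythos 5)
Your reduction differs from the paper's (the paper proves the even case by induction on even complete graphs, attaching \emph{two} new vertices to $K_{2k-2}$ at once), and the difference is exactly where your argument breaks. In the paper's construction every old vertex receives two new edges whose colors sum to $4$, so all old sums shift uniformly by $4$ and distinctness is preserved for free; the two new vertices then land at $3k-2$ and $5k-2$, strictly outside the shifted band $[3k-1,5k-3]$. In your single-vertex extension of $K_{2k-1}$ the shifts $\varepsilon_i\in\{1,2,3\}$ are non-uniform, and this is not a minor nuisance you can wave away: by Lemma~\ref{lem:complete_odd} the values $\sigma_{c_1}(v_i)$ are $2k-1$ distinct integers in $[3k-3,5k-5]$, an interval of exactly $2k-1$ integers, so they occupy \emph{every} value in that band, and all vertices of $K_{2k}$ are pairwise adjacent. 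Hence any two leaves whose $c_1$-sums differ by $1$ or $2$ and that receive the wrong pair of star colors collide, and your concrete assignment (color $1$ to $v_1,\dots,v_{k-1}$, colors $2,3$ split over the rest) will generically produce such collisions, since the vertices $v_1,\dots,v_{k-1}$ singled out by the color-$2$ count need not be consecutive in the $\sigma_{c_1}$-ordering. "Use the extra freedom" or "a greedy ordering resolves it" is not a proof of this, the central step.

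The second, sharper gap is the sum at $v_{2k}$. Under the $\qm$ constraint ($\le k$ edges per color among $2k-1$), the achievable values of $\smc(v_{2k})=\sum_i\varepsilon_i$ range exactly from $3k-2$ to $5k-2$, which are precisely the endpoints of the band $[3k-2,5k-2]$ containing all shifted leaf sums. So your proposed escape, pushing $\smc(v_{2k})$ below or above the leaves' band "for $k$ large", is impossible for every $k$: the leaf sums are $2k-1$ distinct values in a $(2k+1)$-element interval, leaving only two gaps, and $\smc(v_{2k})$ is forced into that interval, so you must simultaneously choose the $\varepsilon_i$ so that (i) all leaf sums stay distinct, (ii) each color is used at most $k$ times at $v_{2k}$, and (iii) the total $\sum_i\varepsilon_i$ hits one of the two gap values. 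Such a coordinated choice can in fact be made (e.g.\ a monotone assignment of colors along the $\sigma_{c_1}$-ordering with color multiplicities $a,b,c$ solving $3a+b=3k-1$), but none of this analysis appears in your proposal, and the fix you do sketch is provably unavailable. As written, the proof has a genuine gap at exactly the point the paper's two-vertex decomposition was designed to avoid.
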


\begin{proof}
    Similarly as for the odd case, the proof goes by induction on the number of vertices. The lemma is true for $k=2$, i.e. for the complete graph $K_4$ (see Fig.\ref{fig}). Assume that it is true for all complete graphs of even order with fewer than $2k$ vertices.  Let $V(K_{2k})=\{v_1,v_2,\ldots,v_{2k}\}$. We decompose $K_{2k}$ into two edge disjoint subgraphs $G_1$ and $G_2$ such that $G=G_1\cup G_2$. Let $G_1=G[\{v_1,\ldots,v_{2k-2}\}]$ and $G_2$ be a~spanning subgraph of $G$ that contains edges $E(G_2)=\{v_{2k-1}v_{2k}\}\cup \{v_{2k-1}v_i:i\in \{1,\ldots,2k-2\}\}\cup \{v_{2k}v_i:i\in \{1,\ldots,2k-2\}\}$. $G_1$ is isomorphic to $K_{2k-2}$, so by the induction hypothesis,  a~$\qmnsd$ 3-edge-coloring exists such that at least $k-2$ vertices are incident to at most $k-2$ edges in color 2. Let $c_1$ be such a~coloring and $v_1,\ldots, v_{k-2}$ be the vertices having at most $k-2$ edges in color 2. Let $c_2$ be an edge-coloring of $G_2$ such that 
    \begin{itemize}
        \item $c_2(v_{2k-1}v_{2k})=2$;
        \item $c_2(v_{2k-1}v_{i})=2$ for $i\in \{1,\ldots,k-2\}$;
        \item $c_2(v_{2k}v_{i})=2$ for $i\in \{1,\ldots,k-2\}$;
        \item $c_2(v_{2k-1}v_{i})=1$ for $i\in \{k-1,\ldots,2k-2\}$;
        \item $c_2(v_{2k}v_{i})=3$ for $i\in \{k-1,\ldots,2k-2\}$.
    \end{itemize}
     
Then, let $c$ be the edge-coloring of $K_{2k}$ such that $c(e)=c_i(e)$ if $e\in E(G_i)$, for $i=1,2$. 
We claim that $c$ is a~$\qm$ edge-coloring. By construction, $c_2$ is $\qm$ at $v_{2k-1}$ and $v_{2k}$. The coloring $c_1$ implies that if $v_i \in \{v_1, \ldots, v_{k-2}\}$, then it has at most $k-2$ edges in $G_1$ colored with 2. So, together with the two edges colored with 2 in $G_2$, every vertex $v_i$, for $i \in \{1, \ldots, k-2\}$, has at most $k$ edges colored with 2. Since $c_1$ is a~$\qm$ edge-coloring,  every vertex $v_i \in \{v_1, \ldots, v_{k-2}\}$ is incident to at most $k-1$ vertices in color 1 and at most $k-1$ vertices in color 3. Furthermore, since every vertex $v_j$, for $j \in \{k-1, \ldots, 2k-2\}$, is $\qm$ in $(G_1, c_1)$ and we use two different colors at each vertex in $c_2$, it follows that it is also $\qm$ in $(G, c)$. Thus, the edge-coloring $c$ is $\qm$.

Each vertex $v_j$, for $j \in \{k-1, \ldots, 2k-2\}$, is adjacent to at most $k-1$ edges colored with 2, since $c_1$ is a quasi-majority edge-coloring, and the edges incident to $v_j$ in $G_2$ are colored 1 and 3. Moreover, both $v_{2k-1}$ and $v_{2k}$ are adjacent to at most $k-1$ edges colored with 2. Therefore, there are at least $k+2$ vertices adjacent to at most $k-1$ edges of color 2. Hence, we can select $k-1$ vertices that satisfy the conditions of the lemma.

Finally, we show that $c$ is an $\nsd$ edge-coloring. We have $\smc(v_{2k-1}) = \sigma_{c_2}(v_{2k-1}) = 3k-2$ and $\smc(v_{2k}) = \sigma_{c_2}(v_{2k}) = 5k-2$, so $\smc(v_{2k-1}) \neq \sigma_{c}(v_{2k})$. For every vertex $v_i$ where $i \in \{1, \ldots, 2k-2\}$, $\smc(v_i) = \sigma_{c_1}(v_i) + 4$. Thus, since $c_1$ is an $\nsd$ edge-coloring, $\smc(v_i) \neq \smc(v_j)$ for $1 \leq i < j \leq 2k-2$. Observe that $3k - 5 \leq \sigma_{c_1}(v_i) \leq  5k - 7$ for $i \in \{1, \ldots, 2k-2\}$. Thus, $\smc(v_{2k}) \neq \smc(v_i)$ and $\smc(v_{2k+1}) \neq \smc(v_i)$ for $i \in \{1, \ldots, 2k-2\}$. Therefore, $c$ is an $\nsd$ edge-coloring.    
\end{proof}

\begin{theorem} \label{thm:complete}
   For $n\ge 3$, we have $\qmnsdi(K_{n})=3$. 
\end{theorem}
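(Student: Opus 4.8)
The plan is to prove $\qmnsdi(K_n) = 3$ for all $n \ge 3$ in two parts: a lower bound $\qmnsdi(K_n) \ge 3$ and an upper bound $\qmnsdi(K_n) \le 3$. The upper bound follows almost immediately from the two lemmas just established: Lemma~\ref{lem:complete_odd} produces a $\qmnsd$ $3$-edge-coloring of every $K_{2k+1}$ with $k \ge 1$, and Lemma~\ref{lem:complete_even} produces one for every $K_{2k}$ with $k \ge 2$, so the only remaining case for the upper bound is $K_3$, which is the base case $k=1$ of Lemma~\ref{lem:complete_odd} (or can be checked by hand: colors $1,2,3$ on the three edges gives sums $3,4,5$, and it is trivially quasi-majority). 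Hence $\qmnsdi(K_n) \le 3$ for all $n \ge 3$.

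For the lower bound, I would argue $\qmnsdi(K_n) \ge 3$, i.e. that no $\qmnsd$ $2$-edge-coloring exists. Consider first $n$ even, $n = 2k$: every vertex has odd degree $2k-1$, so in a quasi-majority $2$-edge-coloring with colors $\{1,2\}$ each vertex is incident to either $k$ edges of color $1$ and $k-1$ of color $2$, or $k-1$ of color $1$ and $k$ of color $2$; in the first case $\sigma_c(v) = k + 2(k-1) = 3k-2$, in the second $\sigma_c(v) = (k-1) + 2k = 3k-1$. Since $K_n$ is connected with at least two vertices and $\sigma_c$ takes only two values but must be a proper coloring of $K_n$ (which has chromatic number $n \ge 3$), this is impossible. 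For $n$ odd, $n = 2k+1$ with $k \ge 1$: here $K_n$ has $\binom{2k+1}{2} = k(2k+1)$ edges and every vertex has even degree $2k$, so by Proposition~\ref{thm:QM_equal_two} a quasi-majority $2$-edge-coloring exists if and only if the size is even; but $k(2k+1)$ is even iff $k$ is even. So when $k$ is odd, there is no $\qm$ $2$-edge-coloring at all. When $k$ is even, a $\qm$ $2$-edge-coloring makes every vertex incident to exactly $k$ edges of each color, hence $\sigma_c(v) = 3k$ for \emph{every} vertex $v$, so $\sigma_c$ is constant and cannot be proper on $K_n$ with $n \ge 3$. In all cases $\qmnsdi(K_n) \ge 3$, completing the proof.

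The only slightly delicate point is remembering to invoke Proposition~\ref{thm:QM_equal_two} for the odd-$n$, odd-$k$ subcase (where the obstruction is the failure of quasi-majority itself, not of distinguishing), and to handle $K_3$ separately in the upper bound since Lemma~\ref{lem:complete_even} starts at $k=2$; neither is a real obstacle. I do not anticipate any genuine difficulty here, as all the structural work is carried by the two preceding lemmas.

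\begin{proof}
By Lemma~\ref{lem:complete_odd} every complete graph of odd order $K_{2k+1}$ with $k \ge 1$ admits a $\qmnsd$ $3$-edge-coloring, and by Lemma~\ref{lem:complete_even} every complete graph of even order $K_{2k}$ with $k \ge 2$ admits one. Since $K_3$ is the case $k=1$ of Lemma~\ref{lem:complete_odd}, we conclude $\qmnsdi(K_n) \le 3$ for every $n \ge 3$. We now show $\qmnsdi(K_n) \ge 3$, i.e. that $K_n$ has no $\qmnsd$ $2$-edge-coloring.

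Suppose $c$ is a $\qm$ $2$-edge-coloring of $K_n$ with colors in $\{1,2\}$. If $n = 2k$, every vertex has odd degree $2k-1$, so each vertex $v$ is incident either to $k$ edges of color $1$ and $k-1$ of color $2$, giving $\sigma_c(v) = 3k-2$, or to $k-1$ edges of color $1$ and $k$ of color $2$, giving $\sigma_c(v) = 3k-1$. Thus $\sigma_c$ takes at most two distinct values on $V(K_n)$. If $n = 2k+1$, then $K_n$ has $k(2k+1)$ edges and all vertices have even degree $2k$. When $k$ is odd, $k(2k+1)$ is odd, so by Proposition~\ref{thm:QM_equal_two} no $\qm$ $2$-edge-coloring of $K_n$ exists, a contradiction. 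When $k$ is even, each vertex is incident to exactly $k$ edges of each color, so $\sigma_c(v) = 3k$ for every vertex $v$, and $\sigma_c$ is constant.

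In every remaining case $\sigma_c$ attains at most two distinct values on $V(K_n)$. But a proper vertex-coloring of $K_n$ requires $n$ distinct values, and $n \ge 3$, so $\sigma_c$ cannot be proper. Hence no $\qmnsd$ $2$-edge-coloring of $K_n$ exists, and $\qmnsdi(K_n) \ge 3$. Combining the two bounds gives $\qmnsdi(K_n) = 3$.
\end{proof}
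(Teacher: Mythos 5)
Your proposal is correct and follows essentially the same route as the paper: the upper bound from Lemmas~\ref{lem:complete_odd} and \ref{lem:complete_even}, and the lower bound by observing that any quasi-majority $2$-edge-coloring of $K_n$ forces $\sigma_c$ to take a single value ($3k$ when $n=2k+1$) or at most two values ($3k-2$ or $3k-1$ when $n=2k$), which cannot properly color the pairwise adjacent vertices of $K_n$ for $n\ge 3$. Your extra split on the parity of $k$ in the odd case (invoking Proposition~\ref{thm:QM_equal_two}) is harmless but unnecessary, since the constant-sum argument already applies to any such coloring that exists.
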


\begin{proof}
    From Lemmas \ref{lem:complete_odd} and \ref{lem:complete_even} it follows that $\qmnsdi(K_{n})\le 3$. Let $c$ be a~$\qm$ edge-coloring of $K_n$ with 2 colors. If $n$ is odd, then $\smc(v)=\frac{3n-3}{2}$ for every $v\in V(K_n)$, so $c$ is not $\nsd$. If $n$ is even, then $\smc(v)=\frac{3n}{2}-2$ or $\smc(v)=\frac{3n}{2}-1$ for every $v\in V(K_n)$, so again $c$ is not $\nsd$.
\end{proof}


\subsection{Bipartite graphs}

To prove an upper bound on the $\qmnsd$ index for bipartite graphs, we use the result established in \cite{KLT}.

\begin{theorem}{\rm (Karo\'nski et al. \cite{KLT})}
		\label{thm:group}
		Let $\Gamma$ be a~finite abelian group of odd order and let $G$ be a~non-trivial $|\Gamma|$-colorable graph. Then, there is an edge-coloring of $G$ with  elements of $\Gamma$ such that the resulting vertex-coloring is proper. 
	\end{theorem}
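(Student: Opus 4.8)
Theorem~\ref{thm:group} is the theorem of Karo\'nski, {\L}uczak and Thomason; here is how I would approach a proof. The plan is to argue by induction, using crucially that $2$ is a unit in $\Gamma$; this is the one place where odd order is needed, and it genuinely is needed (over $\mathbb{Z}_2$ the statement already fails on $K_2$, which is precisely what ``non-trivial'' excludes).

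First I would reduce to the case that $G$ is connected with at least three vertices: isolated vertices are irrelevant, distinct components are handled independently, and ``non-trivial'' is there to forbid a $K_2$-component. Then I would fix a proper vertex-colouring $\varphi\colon V(G)\to\Gamma$, which exists because $G$ is $|\Gamma|$-colourable once the colour set is identified with $\Gamma$, and aim for an induced colouring $\sigma$ that agrees with $\varphi$ wherever possible. The key tool is that \emph{in a connected graph one can prescribe the induced vertex-sum to be any element of $\Gamma$ at every vertex except one}: pick a spanning tree $T$ with a root $r$, give all non-tree edges weight $0$, and sweep $T$ from the leaves towards $r$, at each step letting the weight of the edge from a vertex to its parent be forced by that vertex's target and its already-assigned edges. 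Prescribing $\varphi(v)$ at each $v\ne r$ then makes $\sigma=\varphi$ off $r$, hence proper there, so the only edges that might still be monochromatic under $\sigma$ are those incident with $r$, and it remains to ensure $\sigma(r)\notin\{\varphi(u):u\sim r\}$.

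The delicate point is that $\sigma(r)$ is completely determined by the construction above, so one must exploit the leftover freedom: shifting all targets by a common $c\in\Gamma$ (which keeps $\sigma$ proper off $r$), re-rooting, changing the spanning tree, or adding a constant weight around a cycle through $r$. I would parametrize this family of admissible weightings, track how $\sigma(r)$ depends on the parameters --- here a parity / linear-algebra computation over $\Gamma$ appears, and invertibility of $2$ makes the relevant coefficient a unit --- and conclude that the set of attainable values of $\sigma(r)$ is too large to be blocked by all $\deg(r)$ forbidden values. I expect this bookkeeping at the root to be the main obstacle, and the cleanest remedy is to strengthen the induction hypothesis to a rooted statement (``for every choice of root $r$ there is a good weighting whose value at $r$ avoids any prescribed short list''), so that the inductive step --- deleting an edge, or splitting off a pendant path or a block --- still keeps control of the sum at the distinguished vertex.
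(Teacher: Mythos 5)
The paper does not prove this statement at all: it is quoted verbatim from Karo\'nski, {\L}uczak and Thomason \cite{KLT}, so there is no in-paper proof to compare yours against; your proposal has to stand on its own, and as written it has a genuine gap exactly at the point you flag as ``the delicate point.'' The spanning-tree step is fine: in a connected graph you can indeed force $\sigma(v)=\varphi(v)$ at every vertex except the root $r$, and invertibility of $2$ is the right lever. But your plan for the root rests on the claim that the family of admissible weightings (re-rooting, other spanning trees, cycle shifts through $r$) makes the set of attainable values of $\sigma(r)$ large. That is false when $G$ is bipartite. If $(A,B)$ is the bipartition, then \emph{every} edge weighting $w$ satisfies $\sum_{v\in A}\sigma(v)=\sum_{v\in B}\sigma(v)=\sum_{e}w(e)$, so once the targets at all vertices other than $r$ are fixed, $\sigma(r)$ is completely determined, independently of the tree, the root-ordering, or any non-tree weights; and the cycle trick of adding alternating $\pm x$ around a cycle through $r$ changes $\sigma(r)$ (by $2x$) only when the cycle is odd, which bipartite graphs do not have. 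So for bipartite $G$ the whole burden shifts to modifying the proper colouring $\varphi$ itself so that the forced value at $r$ avoids its neighbours' colours (equivalently, so that $\sum_A\varphi=\sum_B\varphi$), and the one move you list in that direction --- shifting all targets by a common $c$ --- changes $\sum_A\varphi-\sum_B\varphi$ by $(|A|-|B|)c$, hence does nothing in the balanced case $|A|=|B|$.

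That balanced bipartite case is precisely where the original argument of \cite{KLT} has to work non-uniformly: one adjusts the colouring at individual vertices (using that $|\Gamma|\ge 3$, so a bipartite graph has spare colours, and that oddness of $|\Gamma|$ lets one solve the resulting balance equation), rather than relying on weighting freedom at $r$. Your closing suggestion of a strengthened rooted induction could in principle absorb this, but as stated it is a hope, not an argument: the inductive step you sketch (delete an edge, split off a pendant path or block) does not explain how control of the sum at the distinguished vertex survives in a balanced bipartite block, which is the only hard case. Concretely, to repair the proof you should split into the non-bipartite case (where your cycle argument plus invertibility of $2$ does give every value of $\sigma(r)$, so any proper $\varphi$ works) and the bipartite case (where you must first construct a proper $\varphi$ with $\sum_A\varphi=\sum_B\varphi$, and only then invoke the tree construction, now with no leftover constraint at $r$).
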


In \cite{DS}, it was noted that the proof of Theorem \ref{thm:group} leads to the following result.

\begin{theorem} {\rm (Dailly, Sidorowicz \cite{DS})} \label{T1}
Let $G$ be a~connected, nice bipartite graph with a bipartition $(V_1, V_2)$. Then $G$ admits an $\nsd$ $3$-edge-coloring. Moreover, there is an $\nsd$ $3$-edge-coloring $c$ of $G$ such that $\smc(v_1) \neq \smc(v_2) (\!\!\!\mod 3)$ for every $v_1 \in V_1$ and $v_2 \in V_2$. 
\end{theorem}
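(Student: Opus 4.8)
The plan is to reduce the whole statement to a single claim over the group $\mathbb{Z}_{3}$ and then settle that claim by elementary linear algebra; this is in spirit the route of \cite{DS}, which extracts it from the proof of Theorem~\ref{thm:group}.

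First I would observe that it suffices to construct an edge-colouring $c_{0}\colon E(G)\to\mathbb{Z}_{3}$ whose induced vertex sums, computed in $\mathbb{Z}_{3}$, equal a fixed element $r\in\mathbb{Z}_{3}$ at every vertex of $V_{1}$ and differ from $r$ at every vertex of $V_{2}$. Indeed, recolouring the edges of colour $0$ by colour $3$ turns $c_{0}$ into a map $c\colon E(G)\to[3]$ with $\sigma_{c}(v)\equiv\sigma_{c_{0}}(v)\pmod 3$ for every $v$. Then every edge of $G$ joins a vertex with sum $\equiv r$ to a vertex with sum $\not\equiv r$, so $c$ is $\nsd$ and moreover $\sigma_{c}(v_{1})\not\equiv\sigma_{c}(v_{2})\pmod 3$ for all $v_{1}\in V_{1}$, $v_{2}\in V_{2}$, as required. (If $E(G)=\emptyset$ then $G=K_{1}$ and the assertion is vacuous, so from now on $G$ has an edge and $V_{1},V_{2}\neq\emptyset$.)

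To build such a $c_{0}$ I would use linear algebra over $\mathbb{Z}_{3}$. Let $M$ be the vertex--edge incidence matrix of $G$ over $\mathbb{Z}_{3}$, so that $\sigma_{c}=Mc$ for $c\in\mathbb{Z}_{3}^{E(G)}$. A vector $y$ lies in $\ker(M^{\top})$ precisely when $y_{u}+y_{v}=0$ for every edge $uv$; since $G$ is connected and bipartite this forces $y$ to be a scalar multiple of the vector that is $1$ on $V_{1}$ and $-1$ on $V_{2}$, so $\ker(M^{\top})$ is one-dimensional and
\[
\operatorname{Image}(M)=\Bigl\{\,q\in\mathbb{Z}_{3}^{V(G)}\ :\ \textstyle\sum_{v\in V_{1}}q_{v}=\sum_{v\in V_{2}}q_{v}\,\Bigr\}.
\]
Because $G$ is nice, connected and not $K_{2}$, it has at least three vertices, so one part has at least two vertices; as the target conclusion is symmetric in the two parts, I may assume $|V_{2}|\ge 2$. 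Now I would fix $r\in\mathbb{Z}_{3}$ and choose $p\in(\mathbb{Z}_{3}\setminus\{r\})^{V_{2}}$ with $\sum_{v\in V_{2}}p_{v}=|V_{1}|\,r$; this is possible because, with two allowed values at our disposal, the possible sums of $|V_{2}|$ such values already cover all of $\mathbb{Z}_{3}$ once $|V_{2}|\ge 2$. Setting $q_{v}=r$ for $v\in V_{1}$ and $q_{v}=p_{v}$ for $v\in V_{2}$ gives $\sum_{V_{1}}q_{v}=|V_{1}|\,r=\sum_{V_{2}}q_{v}$, so $q\in\operatorname{Image}(M)$, and any $c_{0}$ with $\sigma_{c_{0}}=q$ has exactly the two properties demanded in the reduction.

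The step I expect to be the main obstacle is the choice of $p$: one has to prescribe the $\mathbb{Z}_{3}$-sums on the ``free'' part so as to dodge a single forbidden residue while still respecting the identity $\sum_{V_{1}}q_{v}=\sum_{V_{2}}q_{v}$ imposed by $q\in\operatorname{Image}(M)$, and this is precisely why the free part must contain at least two vertices --- i.e.\ why the hypothesis that $G$ is nice (no $K_{2}$ component) is needed. I would also note that invoking Theorem~\ref{thm:group} for $G$ itself as a black box does \emph{not} suffice for the ``moreover'' part: a proper $\mathbb{Z}_{3}$-colouring of a bipartite graph can have the two parts sharing a residue class (already for $P_{4}$), so one genuinely needs the part-constant colouring produced above, or equivalently an inspection of the construction underlying Theorem~\ref{thm:group}.
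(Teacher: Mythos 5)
Your proof is correct. The paper itself gives no argument for this statement: it imports it from Dailly--Sidorowicz \cite{DS}, who in turn observe that it falls out of the \emph{proof} (not the statement) of the Karo\'nski--{\L}uczak--Thomason group-coloring theorem (Theorem~\ref{thm:group}); your closing remark that the black-box statement of Theorem~\ref{thm:group} cannot deliver the ``moreover'' clause is exactly the reason the paper cites the proof rather than the theorem. What you do differently is replace that walk-based adjustment argument by a self-contained linear-algebra computation over $\mathbb{Z}_3$: you identify the image of the incidence map with the annihilator of $\ker(M^{\top})$, note that for a connected bipartite graph this kernel is spanned by the $\pm 1$ indicator of the bipartition, so the only obstruction to prescribing the residues $\sigma_{c_0}(v)\bmod 3$ is the single balance equation $\sum_{V_1}q_v=\sum_{V_2}q_v$, and then satisfy it by fixing the residue $r$ on $V_1$ and choosing residues in $\{r+1,r+2\}$ on $V_2$, which is possible precisely because niceness and connectedness force one part to have at least two vertices. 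All the steps check out: the annihilator/dimension argument is valid over any field, the achievable sums of at least two values from $\{r+1,r+2\}$ do cover all of $\mathbb{Z}_3$, the symmetry of the conclusion justifies assuming $|V_2|\ge 2$, and relabelling color $0$ as $3$ preserves sums modulo $3$, so adjacent (indeed all cross-bipartition) vertices get distinct residues and hence distinct integer sums. The trade-off is that your route buys a short, fully explicit and solvability-characterizing proof for the bipartite $\mathbb{Z}_3$ case, whereas the route through \cite{KLT}/\cite{DS} yields the statement as a special case of a machinery that works for all $|\Gamma|$-colorable graphs and odd abelian groups.
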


\begin{theorem} 
Every nice bipartite graph $G$ satisfies $\qmnsdi(G)\leq6$. 
\end{theorem}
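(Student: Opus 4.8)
The plan is to combine two edge-colorings of $G$ into one: the residue-refined $\nsd$ $3$-edge-coloring provided by Theorem~\ref{T1} and a quasi-majority $2$-edge-coloring provided by Corollary~\ref{C1}. Since the two defining properties of a $\qmnsd$ coloring each concern only a vertex together with its neighbours, and since the same palette of $6$ colours may be reused on every component, I would first reduce to a single connected nice bipartite graph $H$ with bipartition $(A,B)$; isolated vertices need no colour. Being connected and different from $K_2$, such an $H$ has at least two edges, hence $\Delta(H)\ge 2$, which is exactly the hypothesis of Corollary~\ref{C1}.

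Next I would fix an $\nsd$ $3$-edge-coloring $c'\colon E(H)\to\{1,2,3\}$ with $\sigma_{c'}(a)\not\equiv\sigma_{c'}(b)\pmod 3$ for all $a\in A$ and $b\in B$ (Theorem~\ref{T1}), together with a $\qm$ $2$-edge-coloring $c_1\colon E(H)\to\{1,2\}$ (Corollary~\ref{C1}), and then set $c\colon E(H)\to\{1,\ldots,6\}$, $c(e)=3\bigl(c_1(e)-1\bigr)+c'(e)$. This lands in $\{1,\ldots,6\}$, and $c_1(e)$ is recovered from $c(e)$, since $c_1(e)=1$ precisely when $c(e)\le 3$.

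There are then two things to verify. For the quasi-majority condition, the point is that every colour class of $c$ is contained in a colour class of $c_1$, so at each vertex $v$ no colour of $c$ occurs more often than the corresponding colour of $c_1$ does, that is, at most $\lceil \d(v)/2\rceil$ times. For the neighbor sum distinguishing condition, a one-line computation gives $\smc(v)=3\sum_{u\in N(v)}\bigl(c_1(vu)-1\bigr)+\sigma_{c'}(v)\equiv\sigma_{c'}(v)\pmod 3$, so for an edge $uv$ with $u\in A$ and $v\in B$ we obtain $\smc(u)\equiv\sigma_{c'}(u)\not\equiv\sigma_{c'}(v)\equiv\smc(v)\pmod 3$, whence $\smc(u)\ne\smc(v)$. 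This yields $\qmnsdi(H)\le 6$ and hence the claim.

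I do not expect a serious obstacle: once Theorem~\ref{T1} is available, the proof is essentially the remark that refining a colouring cannot spoil the quasi-majority property, while adding multiples of $3$ to every vertex sum (which is what replacing $c'$ by $c$ does) cannot spoil a distinction made modulo $3$. The only care needed is the reduction to connected components together with the observation that each nontrivial component has $\Delta\ge 2$, and the check that the lift $c(e)=3(c_1(e)-1)+c'(e)$ simultaneously refines $c_1$ and preserves the residues modulo $3$ of the $c'$-sums.
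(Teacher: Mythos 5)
Your proposal is correct and takes essentially the same route as the paper: both arguments combine the residue-refined $\nsd$ $3$-edge-coloring of Theorem~\ref{T1} with quasi-majority $2$-colorings obtained from Corollary~\ref{C1}, and both rest on the observation that the resulting $6$-coloring changes every vertex sum only by a multiple of $3$, so the mod-$3$ separation between the two sides of the bipartition is preserved. The only (harmless) difference is that you cross one global $\qm$ $2$-coloring of the whole graph with the $3$-coloring, whereas the paper applies a $\qm$ $2$-coloring to each monochromatic subgraph $E_i$ separately and recolors part of $E_i$ with color $i+3$; your variant uses Corollary~\ref{C1} once and sidesteps the small technicality that a class $E_i$ may induce a subgraph of maximum degree at most $1$, where that corollary does not literally apply (though the quasi-majority condition is then trivial).
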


\begin{proof}
Let $(V_1, V_2)$ be a bipartition of $G$. By Theorem $\ref{T1}$, there is an $\nsd$ 3-edge-coloring $c$ of the graph $G$ such that $\smc(v_1) \neq \smc(v_2) (\!\!\!\mod 3)$ for  $v_1 \in V_1,v_2 \in V_2$. By $E_i$ we denote the set of edges with color $i$ for $i\in [3]$. By Corollary $\ref{C1}$, there is $\qm$ 2-edge-coloring of every subgraph induced by $E_i$. Thus, we recolor some edges of $E_1$ with color 4 in such a~way that the coloring is $\qm$
at every vertex in subgraph induced by $E_1$. In a similar manner, we recolor certain edges of $E_2$ with the color 5 and some edges of $E_3$ with the color 6. Let $c'$ be the obtained edge-coloring. Thus $\smc(v) = \sigma_{c'} (\!\!\!\mod 3)$ and so $c'$ is an $\qmnsd$ 6-edge-coloring. 
\end{proof}

For complete bipartite graphs, we have a strict result. It is easily seen that $\qmnsdi\left(K_{2,2}\right) = 4.$
\begin{theorem}\label{compbip}
Every nice, complete bipartite graph $K_{n,m}$ such that $K_{n,m} \neq K_{2,2}$ satisfies
$$\qmnsdi\left(K_{n,m}\right) = \left\{ \begin{array}{ll}
2, & \textrm{ if $\;n\neq m$,}\\
3, & \textrm{ otherwise.}
\end{array} \right.$$
\end{theorem}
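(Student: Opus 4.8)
The plan is to split the argument into the two cases $n \neq m$ and $n = m$, and in each case to construct an explicit $\qmnsd$ edge-coloring, complementing this with a matching lower bound. Throughout, write $K_{n,m}$ with parts $A = \{a_1, \dots, a_n\}$ and $B = \{b_1, \dots, b_m\}$, where $n \le m$, and recall that a vertex in $A$ has degree $m$ while a vertex in $B$ has degree $n$. For the lower bounds: $\qmnsdi(K_{n,m}) \ge 2$ always holds since there is no nice graph with $\qmnsd$ index $1$; and when $n = m$, any $\qm$ $2$-edge-coloring $c$ assigns to every vertex $v$ (which has degree $n$) a sum $\sigma_c(v) \in \{\lceil 3n/2 \rceil, \lfloor 3n/2 \rfloor + [n \text{ odd}]\}$ lying in a window of size one around $3n/2$; more precisely, a vertex incident to $j$ edges of color $1$ and $n-j$ of color $2$ has sum $2n - j$, and the $\qm$ condition forces $j \in \{\lfloor n/2 \rfloor, \lceil n/2 \rceil\}$, so $\sigma_c(v) \in \{\lfloor 3n/2\rfloor, \lceil 3n/2 \rceil\}$ — only two possible values. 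Since $K_{n,n}$ (for $n \ge 3$, and $n=2$ is excluded) contains adjacent vertices, and one checks that in fact not all adjacent pairs can be separated within these two values (a short parity/counting argument: the bipartite structure plus the $\qm$ count on both sides over-constrains the edge set), we get $\qmnsdi(K_{n,n}) \ge 3$. This parity argument is the one genuinely delicate point and I would isolate it as the main obstacle; the cleanest route is probably to observe that a $\qm$ $2$-edge-coloring of $K_{n,n}$ with $\sigma_c$ proper would need all $A$-vertices on one of the two values and all $B$-vertices on the other, force $|E_1|$ to be simultaneously $n\lfloor n/2\rfloor$ counted from $A$ and $n\lceil n/2 \rceil$ counted from $B$ (or vice versa), and derive a contradiction when these differ, i.e. when $n$ is odd — and handle even $n \ge 4$ separately by noting the sums would then all be equal to $3n/2$.

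For the upper bound when $n \neq m$, the goal is $\qmnsdi(K_{n,m}) = 2$. Here the degrees of the two sides differ ($m \neq n$), so it suffices to produce a $\qm$ $2$-edge-coloring, since by the computation above $\sigma_c$ takes values in a size-one window around $3m/2$ on $A$ and around $3n/2$ on $B$; as long as these windows are disjoint, $c$ is automatically $\nsd$. The windows $\{\lfloor 3m/2\rfloor, \lceil 3m/2\rceil\}$ and $\{\lfloor 3n/2\rfloor, \lceil 3n/2 \rceil\}$ are disjoint whenever $3m/2$ and $3n/2$ differ by more than one, i.e. whenever $m > n$, with the sole borderline needing care being small differences; a $\qm$ $2$-edge-coloring exists by Corollary~\ref{C1} (as $\Delta(K_{n,m}) \ge 2$). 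I would double-check the few smallest exceptional pairs (e.g. $K_{1,2}$, $K_{1,3}$, $K_{2,3}$) by hand to confirm the windows really are disjoint, since $3m/2 - 3n/2 = 3/2$ when $m = n+1$, which exceeds $1$, so disjointness holds; the only truly tight spot is avoided because $n = m$ is the excluded case.

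For the upper bound when $n = m \ge 3$, the goal is $\qmnsdi(K_{n,n}) \le 3$. Here I would not insist on a $\qm$ $2$-coloring (impossible by the lower bound) but instead start from a $\qm$ $2$-edge-coloring $c$ guaranteed by Corollary~\ref{C1}, under which every vertex has sum in $\{\lfloor 3n/2\rfloor, \lceil 3n/2\rceil\}$, and then recolor a small number of edges with a third color to break the remaining conflicts while preserving the $\qm$ property. Concretely, pick a perfect matching $M = \{a_i b_i : i \in [n]\}$ in $K_{n,n}$; recolor the edges of $M$ lying in part-$A$-side... more precisely, for each $i$ recolor $a_i b_i$ with color $3$ with an appropriately chosen pattern so that the new sums on $A$ shift into a range bounded away from the sums on $B$ — for instance recolor so that every $A$-vertex gains a controlled amount and every $B$-vertex a different controlled amount, using that each vertex is incident to exactly one edge of $M$. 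Since each vertex loses at most one edge of its majority color and gains one edge of color $3$, and $n \ge 3$ ensures $\lceil n/2\rceil \ge 2 > 1$, the recoloring stays $\qm$ at every vertex. One then verifies the resulting sums satisfy $\sigma_{c'}(A\text{-vertices})$ and $\sigma_{c'}(B\text{-vertices})$ occupy disjoint sets of values (e.g. the $A$-sums become $\equiv$ something mod $3$ distinct from the $B$-sums, mirroring the mod-$3$ trick used in the bipartite upper bound proof above). Combining the matching lower bound $\qmnsdi(K_{n,n}) \ge 3$ with this construction gives equality, and the $n \neq m$ case is already settled, completing the proof.
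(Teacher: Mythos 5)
Your lower bounds and your treatment of $n\neq m$ are correct and essentially the paper's own argument: for $n\neq m$ the paper likewise takes an arbitrary $\qm$ $2$-edge-coloring from Corollary~\ref{C1} and uses the observation (Proposition~\ref{prop:qm two coloring}) that under a $\qm$ $2$-coloring the sum at a vertex determines its degree, so vertices of different degrees are automatically distinguished; and for $n=m$ your double-counting of the color-$1$ edges is just a more careful version of the paper's brief remark that a $\qm$ $2$-edge-coloring of $K_{n,n}$ cannot be $\nsd$.

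The genuine gap is in your upper bound for $n=m$. Recoloring (any subset of) a perfect matching $M=\{a_ib_i : i\in[n]\}$ with color $3$ can never separate $a_i$ from $b_i$: the only recolored edge incident to $a_i$ is $a_ib_i$, which is also the only recolored edge incident to $b_i$, and it contributes the same increment $3-c(a_ib_i)$ to both endpoints, so $\smc(a_i)-\smc(b_i)$ is unchanged by the recoloring. Hence your proposed mechanism, that ``every $A$-vertex gains a controlled amount and every $B$-vertex a different controlled amount,'' is impossible, and the mod-$3$ separation you invoke cannot be achieved this way. Concretely, for even $n$ the initial $\qm$ $2$-coloring forces $\smc(v)=3n/2$ at every vertex, so after recoloring the matching each adjacent pair $a_i,b_i$ still has equal sums; the same obstruction hits any matched pair with equal initial sums when $n$ is odd. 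Any repair must recolor, for each matched pair, edges incident to only one of the two vertices. The paper does this with a fully explicit $3$-coloring: split each side into a first and a second half, color the two ``diagonal'' blocks $K_{n,n}[\{a_1,\ldots,a_{\lceil n/2\rceil},b_1,\ldots,b_{\lceil n/2\rceil}\}]$ and its complement entirely with color $1$, and color the crossing edges with colors $2$ and $3$ (by side for odd $n$, by the parity of the index $i$ of $a_i$ for even $n$), then verify the $\qm$ property and compute the handful of resulting sum values directly; you would need a construction of that kind to close the case $n=m$.
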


\begin{proof}

By Corollary \ref{C1}, $K_{n,m}$ has a $\qm$ 2-edge-coloring. If $n\neq m$, then no two adjacent vertices have the same degree. Thus, by Proposition \ref{prop:qm two coloring}, we have $\qmnsdi(K_{n,m})=2$ for $n\neq m$.

When $m=n$, two colors are not sufficient for a $\qmnsd$ edge-coloring of $K_{n,n}$. This is because the only possibility for a $\qm$ 2-edge-coloring of $K_{n,n}$ is to color $\left\lceil\frac{n}{2}\right\rceil$ edges at each vertex with one color and the remaining edges with the other color, but such an edge-coloring will not be $\nsd$. We now show that if $m=n$, then there exists a $\qmnsd$ 3-edge-coloring of $K_{n,n}$. 

Let $K_{n,n}=(V_1,V_2,E)$, where $V_1=\left\{a_1,\ldots,a_n\right\}$, $V_2=\left\{b_1,\ldots,b_n\right\}$. We decompose $K_{n,n}$ into the following three edge disjoint subgraphs $G_1$, $G_2$, and $G_3$.  Let $G_1=K_{n,n}\left[\left\{a_1,\ldots,a_{\left\lceil \frac{n}{2}\right\rceil},b_1,\ldots,b_{\left\lceil \frac{n}{2}\right\rceil}\right\}\right]$, $G_2=K_{n,n}\left[\left\{a_{\left\lceil \frac{n}{2}\right\rceil+1},\ldots,a_n,b_{\left\lceil \frac{n}{2}\right\rceil+1},\ldots,b_n\right\}\right]$ and let $G_3$ be a spanning subgraph of $K_{n,n}$ with  $E(G_3)=E(K_{n,n})\setminus(E(G_1)\cup E(G_2))$. Let $c_1$ be an edge-coloring of $G_1$ and $G_2$ such that every edge of $G_1$ and $G_2$ has color $1$. 

First, we consider the case when $n$ is odd. Let $c_2$ be an edge-coloring of $G_3$ such that $c_2(a_ib_j)=2$ for $i \le \left\lceil\frac{n}{2}\right\rceil$, $j \ge \left\lceil\frac{n}{2}\right\rceil+1$, and $c_2(a_ib_j)=3$ for $i \ge \left\lceil\frac{n}{2}\right\rceil+1$, $j \le \left\lceil\frac{n}{2}\right\rceil$. By the construction of $c_1$ and $c_2$,
\begin{itemize}
 \item at every $a_i$ with $i\leq \left\lceil \frac{n}{2}\right\rceil$, we have $\left\lceil \frac{n}{2}\right\rceil$ edges with color 1 and $\left\lfloor \frac{n}{2}\right\rfloor$ edges with color 2;
 \item at every $a_i$ with $i\ge \left\lceil \frac{n}{2}\right\rceil+1$, we have $\left\lceil \frac{n}{2}\right\rceil$ edges with color 3 and $\left\lfloor \frac{n}{2}\right\rfloor$ edges with 1;
\item at every $b_{i}$ with $i\leq \left\lceil \frac{n}{2}\right\rceil$, we have $\left\lceil \frac{n}{2}\right\rceil$ edges with color 1 and $\left\lfloor \frac{n}{2}\right\rfloor$ edges with 3;
\item at every $b_i$ with $i\ge \left\lceil \frac{n}{2}\right\rceil+1$,  we have $\left\lceil \frac{n}{2}\right\rceil$ edges with color 2 and $\left\lfloor \frac{n}{2}\right\rfloor$ edges with 1.    
\end{itemize}
The edge-coloring $c$ is $\qm$. We can also observe that $\smc(a_i)=\frac{3n-1}{2}$ for every vertex $a_i$ with $i\le\left\lceil \frac{n}{2}\right\rceil$; $\smc(a_i)=2n+1$ for every vertex $a_i$ with $i \ge \lceil \frac{n}{2}\rceil+1$; $\smc(b_i)=2n-1$ for every vertex $b_{i}$ with $i\le \lceil \frac{n}{2}\rceil$; and $\smc(b_i)=\frac{3n+1}{2}$ for every vertex $b_i$ with $i\ge \lceil \frac{n}{2}\rceil+1$. Hence, the edge-coloring $c$ is $\nsd$. 

Let now $n$ be even and let $c_2$ be an edge-coloring of $G_3$ such that $c_2(a_ib_j)=2$ if $i$ is odd, $j \in [n]$ and $c_2(a_ib_j)=3$ if $i$ is even, $j \in [n]$. By the construction of $c_1$ and $c_2$, we obtain: 
\begin{itemize}
    \item at every vertex $a_i$ with odd $i$, we have $\frac{n}{2}$ edges with color 1 and $\frac{n}{2}$ edges with color 2;
    \item at every $a_i$ with even $i$, we have $\frac{n}{2}$ edges with color 1 and $\frac{n}{2}$ edges with color 3;
    \item at every vertex $b_{i}$ with $i\le \frac{n}{2}$, we have $\frac{n}{2}$ edges with color $1$, $\left\lceil \frac{n}{4}\right\rceil$ edges with color 3, and $\left\lfloor \frac{n}{4}\right\rfloor$ edges with color 2;
    \item at every $b_i$ with $i\ge  \frac{n}{2} +1$, we have $\frac{n}{2}$ edges with color 1, $\left\lceil \frac{n}{4}\right\rceil$ edges with color 2, and $\left\lfloor \frac{n}{4}\right\rfloor$ edges with color 3.
\end{itemize}
It is easy to see that the edge-coloring $c$ is $\qm$. For every vertex $a_i$ with odd $i$, we have $\smc(a_i)=\frac{3n}{2}$; for every vertex $a_i$ with even $i$, we have $\smc(a_i)=2n$. If $n\equiv 0 \pmod{4}$, then $\smc(b_i)=\frac{7n}{4}$ for every vertex $b_i,\;i=\{1,\ldots,\frac{n}{2}\}$.  If $n\equiv 2 \pmod{4}$, then $\smc(b_i)=\frac{7n+2}{4}$ for  $i=\{1,\ldots,\frac{n}{2}\}$ and $\smc(b_i)=\frac{7n-2}{4}$ for  $i=\{\frac{n}{2}+1,\ldots,n\}$. Hence, the edge-coloring $c$ is also $\nsd$. 
\end{proof}


\subsection{Trees}

\begin{theorem} \label{thm:trees}
For each tree $T$ of order $n \geq 3$ we have 
$$\qmnsdi(T) = \left\{ \begin{array}{ll}
2, & \textrm{ if $\;T$ has no two adjacent vertices of equal even degree,}\\
3, & \textrm{ otherwise.}
\end{array} \right.$$
\end{theorem}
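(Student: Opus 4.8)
The plan is to prove both directions. For the lower bound, suppose $T$ has two adjacent vertices $u,v$ of equal even degree $2k$. In any $\qm$ $2$-edge-coloring, each vertex of even degree $2k$ must see exactly $k$ edges of each color (since "$\le \lceil 2k/2 \rceil = k$" forces equality for both colors when only two colors are used), hence $\smc(u) = \smc(v) = 3k$; so $\qmnsdi(T) \ge 3$. Combined with Corollary \ref{cor:QM_upper_bound} (every graph has a $\qm$ $3$-edge-coloring, and a tree is nice so this can be made $\nsd$ — but we need $\qm$ \emph{and} $\nsd$ simultaneously, so the $\le 3$ direction needs its own argument), this settles the "otherwise" case once we show $\qmnsdi(T)\le 3$ always. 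So the real work is: (i) if $T$ has no two adjacent vertices of equal even degree, then $\qmnsdi(T)=2$; (ii) in all cases $\qmnsdi(T)\le 3$.

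For (i): by Corollary \ref{C1} a tree with $\Delta \ge 2$ has a $\qm$ $2$-edge-coloring. As computed in Proposition \ref{prop:qm two coloring}, under a $\qm$ $2$-coloring a vertex of degree $2k$ has sum $3k$ and a vertex of degree $2k+1$ has sum $3k+1$ or $3k+2$. Thus two adjacent vertices can collide only if they have the same degree and that degree is \emph{even} (odd-degree vertices of the same degree can still be separated by choosing which color is in majority). The subtlety is that we do not automatically control which sum an odd-degree vertex gets. So the plan is a rooting/greedy argument: root $T$ at a leaf, and process vertices in BFS order, at each step choosing the parent edge's color to steer each odd-degree vertex's sum away from its already-colored neighbors — but this may conflict with the global $\qm$ constraint. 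The cleaner route I would take: start from \emph{any} $\qm$ $2$-edge-coloring $c$ (it exists), and observe the only possible conflicts are pairs of adjacent odd-degree vertices of equal degree; then show we can locally swap colors along a path to fix each such conflict while preserving the $\qm$ property (swapping both edges at a degree-$2k+1$ vertex between its two "halves" keeps it $\qm$ and flips its sum between $3k+1$ and $3k+2$). Careful bookkeeping on a tree (no cycles, so "flip a pendant edge" or "flip along a path to a leaf" propagates cleanly) should close this.

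For (ii): I would take a $\qm$ $2$-edge-coloring $c$ as above and repair the finitely many conflicting edges $uv$ (adjacent, equal even degree $2k$) using a third color: recolor one edge at $u$ — say a leaf-ward edge, or any edge — from its current color to color $3$. This changes $\smc(u)$ by $\pm 1$ or $\pm 2$ and keeps $u$ $\qm$ (it now has $\le k$ edges of each of colors $1,2$ and one of color $3$). The obstacle is that a single recoloring may create a new conflict with another neighbor of $u$; but on a tree one can orient the repairs along the tree structure (process conflicting edges from the leaves inward, and when fixing $uv$ recolor the edge on the side closer to a leaf) so that no cascade occurs, or simply assign distinct powers-of-two-like perturbations. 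Actually the slick argument: for each conflict edge $uv$, recolor \emph{that very edge} $uv$ with color $3$; then $\smc(u)$ and $\smc(v)$ each change by the same amount $3 - c(uv)$, so they stay equal — that does \emph{not} work. Hence the repair must be asymmetric, recoloring an edge incident to exactly one endpoint. The main obstacle, and where I'd spend the most care, is exactly this: showing the asymmetric repairs on a tree can be carried out simultaneously without conflicts, which I expect follows by processing conflict edges in an order consistent with a rooting of $T$ and always recoloring on the child side, using that each vertex has a unique parent edge so each vertex is "repaired on behalf of" at most one conflict.

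Finally, I would remark that the upper bound $\qmnsdi(T)\le 3$ is consistent with Corollary \ref{cor:QM_upper_bound}, and note the stars $K_{1,2k}$ (a single vertex of even degree — but no \emph{two adjacent} such) and paths/double-stars as illustrative tight examples showing the value $3$ is genuinely attained.
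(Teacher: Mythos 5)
Your lower bound is correct and is the same argument as the paper's: with two colors an even-degree vertex must be split exactly in half, so two adjacent vertices of equal even degree $2k$ both get sum $3k$. The gaps are in the two upper-bound constructions, which you only sketch. For the $2$-color case, the repair step is not carried out: flipping the ``bias'' of a single odd-degree vertex is not a free move (a counting argument over all edges shows the biases satisfy a global parity constraint, so an isolated flip is impossible; your path-to-a-leaf flip does work because it also flips a leaf, but you never verify that the intermediate vertices keep their sums and the QM property, which requires choosing at each step an edge of the right color, available since every vertex of degree at least $2$ has $\lfloor d/2\rfloor\ge 1$ edges of each color). More importantly, fixing the conflict between $u$ and $v$ by changing $\sigma_c(u)$ from $3k+1$ to $3k+2$ can create a \emph{new} conflict between $u$ and a different neighbor of the same odd degree, so ``careful bookkeeping should close this'' is exactly the missing proof; one needs a processing order (root the tree, adjust each vertex against its parent by a flip descending into its own subtree) --- at which point you are essentially redoing the paper's BFS construction, which colors the edges at each vertex directly into one of three types whose sums are $0,1,2 \pmod 3$ and never needs repairs.

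The $3$-color case has a larger gap. First, in the ``otherwise'' case the tree may also contain adjacent vertices of equal \emph{odd} degree; your repair scheme only discusses the equal-even-degree conflicts, but the odd--odd conflicts must be resolved too, and with only two colors they cannot always be (that is why case (i) needed its hypothesis), so they interact with the same third-color repairs. Second, and decisively, once an edge is recolored to $3$ the clean structure ``sum $\equiv$ type $\pmod 3$ determined by degree parity'' is destroyed: the endpoint of the recolored edge that you did \emph{not} intend to fix has its sum shifted by $1$ or $2$, so it can now collide with \emph{any} of its neighbors, including pairs that were previously safe (e.g.\ an even-degree vertex whose sum moves from $3m$ to $3m+1$ next to an odd-degree neighbor with sum $3m+1$). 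Your ``process conflict edges from the leaves inward and recolor on the child side'' does not rule out these newly created collisions, and no invariant is maintained that would. The paper sidesteps all of this by never repairing: it defines six coloring types (E1--E3 for even degree, O1--O3 for odd degree) realizing every residue class mod $3$, and extends the coloring vertex by vertex in BFS order, checking via a small case table that, whatever the color of the parent edge and the type at the parent, a type with a different residue is available at the child; adjacent sums then differ mod $3$ by construction. To salvage your route you would need either such an invariant or a termination/noninterference argument for the cascade, neither of which is supplied.
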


\begin{proof} Clearly, $\qmnsdi(T)\geq 2$ for every tree with at least three vertices. 
Furthermore, if in $T$ there are two adjacent vertices $x,y$ such that $\d(x)=\d(y)=k$ and $k$ is even, then to obtain a~$\qm$ edge-coloring, we have only two possibilities. We can color $xy$ with color 1 and assign color 2 to each of the $\frac{k}{2}$ edges incident to each vertex, with the remaining $\frac{k}{2}-1$ edges receiving color 1. Alternatively, we can color $xy$ with color 2 and assign color 1 to each of the $\frac{k}{2}$ edges incident to each vertex, with the remaining $\frac{k}{2}-1$ edges receiving color 2. In both cases, $\smc(x)=\smc(y)$; thus, we have to use three colors.

Pick a vertex $v_0$ as a root of the tree $T$. Let $l(v)$ be the distance of a vertex $v$ to the root $v_0$, and let $h(T)=\max_{v \in V(T)}l(v)$. Each value of $l(v)$ we call a {\it level} of $T$.

We show that there exists a~$\qmnsd$ 2-edge-coloring of $T$ if $T$ has no two adjacent vertices of equal even degree, and a~$\qmnsd$ 3-edge-coloring, otherwise. In both cases, we start the edge-coloring from the edges incident to the root $v_0$ and color these edges such that the edge-coloring at $v_0$ is $\qm$. Then we 
consider consecutive vertices in the BFS ordering until we color all edges in $T$. 

Suppose that $y$ is the first vertex in the BFS coloring with an uncolored edge. Let $x$ be its parent. Denote by $c$ the partial coloring obtained so far. Then $c$ is QM at $x$. We show how to extend the edge-coloring $c$ to uncolored edges incident to $y$ such that $\smc(x)\neq\smc(y)$ and $c$ is $\qm$ at $y$. Observe that if $y$ is a~leaf, then $\smc(x)\neq\smc(y)$, and $c$ already is a $\qm$ edge-coloring at $y$, thus we can assume that $y$ is a~non-leaf child of~$x$. 

\medskip
\noindent \textbf{Case 1.} $T$ has no two adjacent vertices of equal even degree.  

We prove that we can always color edges incident to $y$ with two colors such that the edge-coloring is $\qm$ at $y$ and distinguishes  $x$ and $y$ by sums. At the beginning note that if we want to color the edges incident to any vertex $v \in V(T)$ with a $\qm$ 2-edge-coloring, we have the following three types of possible colorings.

\begin{description}
    \item [(1)] $\d(v)$ is odd, $\frac{\d(v)+1}{2}$ edges are colored with 1, and $\frac{\d(v)-1}{2}$ edges with 2.
	\item [(2)] $\d(v)$ is odd, $\frac{\d(v)+1}{2}$ edges are colored with 2, and $\frac{\d(v)-1}{2}$ edges with 1.
    \item [(3)] $\d(v)$ is even, $\frac{\d(v)}{2}$ edges are colored with 1, and $\frac{\d(v)}{2}$ edges with 2.
\end{description}

\noindent If the edge-coloring is of type $(i)$ at $v$, then $\smc(v) \equiv i \pmod 3$, for $i\in\{1,2\}$. If the edge-coloring is type $(3)$ at $v$, then $\smc(v) \equiv 0 \pmod 3$.

Observe that if $\d(y)$ is even and since $\d(y) \geq 2$, then it follows that irrespective of the color of  $xy$, we have the opportunity to color the uncolored edges at $y$ so that we achieve at $y$ an~edge-coloring of type (3). On the other hand,  when $\d(y)$ is odd and since $\d(y) \geq 3$, regardless of a color of the edge $xy$, we can extend the edge-coloring at $y$ to an edge-coloring of type 1 and type 2.

We color the edges incident to $y$ in the following way.

\begin{itemize}
	\item If $\d(y)$ is even, then we apply such an edge-coloring to ensure that  we have a~coloring of type $(3)$ at $y$.
	\item If $\d(y)$ is odd, then the coloring of the edges incident to $y$ depends on the coloring of edges incident to $x$. When $\d(x)$ is even or $\d(x)$ is odd and the edge-coloring at $x$ is of type $(2)$, then we color the edges so that we obtain at $y$ the edge-coloring of type $(1)$. When $\d(x)$ is odd and the edge-coloring at $x$ is of type $(1)$, then we color the edges so that we obtain at $y$ the edge-coloring of type $(2)$.   
\end{itemize}

Obviously, the edge-coloring is $\qm$ at $y$. We claim that it distinguishes $x$ and $y$. If $\d(x)$ is odd and $\d(y)$ is even, then $\smc(x)\equiv 1\; \hbox{or}\;2 \pmod 3$ and $\smc(y)\equiv 0 \pmod 3$, therefore $\smc(x)\neq\smc(y)$. If $\d(x)$ and $\d(y)$ are both odd, then we have chosen the type of edge-coloring for $y$ such that $\smc(x)\not\equiv\smc(y) \pmod 3$. If $\d(x)$ is even and $\d(y)$ is odd, then $\smc(x)\equiv 0 \pmod 3$ and $\smc(y)\equiv 1\; \hbox{or}\;2 \pmod 3$, hence $\smc(x)\not\neq \smc(y)$. If both $\d(y),\d(x)$ are even, then $\smc(y) \neq \smc(x)$, since $\d(y)\neq\d(x)$. 

\medskip
\noindent \textbf{Case 2.} $T$ has two adjacent vertices of equal even degree.                      

Now we prove that we can always color the edges incident to $y$ with three colors so that the edge-coloring is $\qm$ at $y$ and distinguishes $x$ and $y$ by sums. First, observe that, in contrast to Case 1, there are many possibilities for coloring edges incident to a~vertex that result in a~$\qm$ edge-coloring, as we can use three colors. We define specific types of edge-colorings at the vertex, distinguishing between vertices of even degree and vertices of odd degree. We choose colors for edges in such a~way that each type results in a~sum at the vertex that is different modulo 3.

Let $v\in V(T)$ be a~vertex of even degree. We use the following three types of coloring of the edges incident to $v$.

\begin{description}
  \item [E1:]  $\frac{\d(v)}{2}$ edges have color 1, $\frac{\d(v)}{2}-1$ edges have color 2, and one edge has color 3; so $\smc(v) \equiv 1 \pmod 3$;
	\item  [E2:] $\frac{\d(v)}{2}-1$ edges have color with 1, $\frac{\d(v)}{2}$ edges have  color 2, and one edge  has color 3; so  $\smc(v) \equiv 2 \pmod 3$;
	\item  [E3:]  $\frac{\d(v)}{2}$ edges have color 1 and $\frac{\d(v)}{2}$ edges have color 2; so  $\smc(v) \equiv 0 \pmod 3$.
\end{description}
	
\noindent If $v\in V(T)$ has odd degree, then we use the following three types of coloring of the edges incident to $v$.

\begin{description}	
\item [O1:] $\frac{\d(v)-1}{2}$ edges we have color 1, $\frac{\d(v)-3}{2}$ edges have color 2, and two edges have color 3; so $\smc(v) \equiv 1 \pmod 3$;
\item [O2:] $\frac{\d(v)-3}{2}$ edges we have color 1, $\frac{\d(v)-1}{2}$ edges have color 2, and two edges have color 3; so $\smc(v) \equiv 2 \pmod 3$;
\item [O3:] if  $\frac{\d(v)-1}{2}$ edges we color with 1, $\frac{\d(v)-1}{2}$ edges have color 2 and one edge has color 3; so$\smc(v) \equiv 0 \pmod 3$.
\end{description}
	
\noindent Observe that depending on the color of the edge $xy$ and the parity of $d(y)$, one type of edge-coloring is available for the edges incident to $y$ so that  $\smc(x)\not\equiv \smc(y) \pmod 3$. Then, the edge-coloring distinguishes vertices $x$ and $y$, and is $\qm$ at $y$. 

Table \ref{tab} shows the types of coloring to which we extend the coloring $c$ to the edges incident to $y$ depending on $c(xy)$ and the type of coloring at $x$. One cell in the table is empty because in the coloring of type E3 there is no edge with color 3.

\begin{table}[h] 
\begin{center}
\begin{tabular}{|c|c|c|c|c|c|c|c|} 
  \cline{3-8}
  \multicolumn{2}{c|}{} & \multicolumn{6}{c|}{The type of coloring at $x$} \\
  \cline{3-8}
  \multicolumn{2}{c|}{} & E1 & E2 & E3 & O1 & O2 & O3 \\ 
  \hline
  \multirow{3}{*}{\!\!\!$c(xy)$}\!\!\!& 1 & E3, O3 &\!\!\!E1/E3, O1/O3\!\!\!&\!\!\!E1, O1\!\!\!& E3, O3 &\!\!\!E1/E3, O1/O3\!\!\!& E1, O1 \\
  \cline{2-8}
  & 2 &\!\!\!E2/E3, O2/O3\!\!\!& E3, O3 &\!\!\!E2, O2\!\!\!&\!\!\!E2/E3, O2\!\!\!& E3, O3 & E2, O2 \\
  \cline{2-8}
  & 3 & E2, O2/O3 & E1, O1/O3 &  &\!\!\!E2, O2/O3\!\!\!& E1, O1/O3 &\!\!\!E1/E2, O1/O2\!\!\!\\
  \hline
\end{tabular}
\end{center}
\caption{The types of colorings to which the coloring at $y$ is extended.}
\label{tab}
\end{table}
\end{proof}


\subsection{Graphs with maximum degree at most 4}

If $G$ is connected and $\Delta(G)\le 2$, then $G$ is either a cycle or a path. Thus, Propositions \ref{prop:path} and \ref{prop:cycle} give the value of $\qmnsd$ index for graphs with maximum degree at most 2. If $\Delta(G)\le 3$ and $G$ has no component isomorphic to $C_5$, then the $\qmnsd$ index is at most 4, by Theorem \ref{thm:subcubic}. In this subsection, we prove that there is a~$\qmnsd$ $7$-edge-coloring of every nice graph with maximum degree at most $4$.  The proof is based on the Combinatorial Nullstellensatz of Alon \cite{Al99}.

\begin{theorem}[Alon \cite{Al99}] \label{alon}
		Let $\mathbb{F}$ be an arbitrary field, and let $P = P(x_1,\ldots, x_n)$ be a polynomial in $\mathbb{F}[x_1,\ldots, x_n]$. Suppose the degree of $P$ equals $\sum_{i=1}^n k_i$, where each $k_i$ is a nonnegative integer, and suppose the coefficient of $x_1^{k_1}\cdots x_n^{k_n}$ in $P$ is nonzero. Then if $S_1,\ldots , S_n$ are subsets of $\mathbb{F}$ with $|S_i| > k_i$, there are $s_1\in S_1,\ldots ,s_n\in S_n$ such that $P(s_1,\ldots, s_n)\neq 0$.
	\end{theorem}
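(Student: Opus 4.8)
The plan is to argue by contradiction: I assume that $P$ vanishes at every point of the grid $S_1 \times \cdots \times S_n$, and derive a contradiction with the hypothesis that the coefficient of $x_1^{k_1}\cdots x_n^{k_n}$ is nonzero. First I would reduce to the case $|S_i| = k_i + 1$: since the conclusion only gets harder as the sets shrink, I may discard elements from each $S_i$ until $|S_i| = k_i + 1$ exactly, which loses no generality because $|S_i| > k_i$.

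The engine of the argument is a division-with-remainder (reduction) procedure. For each $i$ set $g_i(x_i) = \prod_{s \in S_i}(x_i - s)$, a monic polynomial of degree $|S_i| = k_i + 1$, and write $g_i(x_i) = x_i^{k_i+1} - r_i(x_i)$ with $\deg r_i \le k_i$. Since $g_i(s) = 0$ for $s \in S_i$, the identity $x_i^{k_i+1} = r_i(x_i)$ holds for every $x_i \in S_i$. I would use this to rewrite $P$ by repeatedly replacing any power $x_i^{m}$ with $m \ge k_i + 1$ occurring in a monomial by $x_i^{m-(k_i+1)}\, r_i(x_i)$. Each substitution strictly lowers the total degree of the monomial it acts on, because $\deg r_i \le k_i$ forces the new $x_i$-degree to be at most $m-(k_i+1)+k_i = m-1$. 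Hence the process terminates and produces a polynomial $\bar P$ with $\deg_{x_i}\bar P \le k_i$ for every $i$ and with $\bar P(s) = P(s)$ at every grid point $s \in S_1\times\cdots\times S_n$.

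Two observations then close the argument. On one hand, since $P$ vanishes on the grid, so does $\bar P$; but $\bar P$ has degree at most $k_i < |S_i|$ in each variable $x_i$, so a grid-vanishing lemma (a polynomial that vanishes on $\prod_i S_i$ while having degree less than $|S_i|$ in each $x_i$ is identically zero, proved by induction on $n$ from the univariate fact that a nonzero polynomial of degree $d$ has at most $d$ roots) forces $\bar P \equiv 0$. On the other hand, I would track the coefficient of $x_1^{k_1}\cdots x_n^{k_n}$ through the reduction. This monomial already has each exponent equal to $k_i < k_i + 1$, so no substitution ever acts on it; and because every substitution strictly decreases total degree while $x_1^{k_1}\cdots x_n^{k_n}$ has the maximal degree $\sum_i k_i = \deg P$, no reducible monomial of $P$ can contribute at total degree $\deg P$. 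Thus the only top-degree monomial of $\bar P$ with all exponents at most $k_i$ is $x_1^{k_1}\cdots x_n^{k_n}$ itself, so its coefficient in $\bar P$ equals its nonzero coefficient in $P$, giving $\bar P \not\equiv 0$ and contradicting $\bar P \equiv 0$.

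The main obstacle, and the point needing the most care, is the degree bookkeeping in the reduction step: I must verify simultaneously that the replacement $x_i^{k_i+1}\mapsto r_i(x_i)$ strictly lowers total degree (which yields both termination and the claim that no other monomial reduces into the designated one) and that the distinguished top-degree monomial is genuinely left untouched, so that its coefficient is provably preserved. The grid-vanishing lemma is routine, so the crux is organizing the reduction cleanly enough that the coefficient of $x_1^{k_1}\cdots x_n^{k_n}$ survives unchanged.
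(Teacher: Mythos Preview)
The paper does not prove this theorem; it is quoted from \cite{Al99} and used as a black box in the subsequent Combinatorial Nullstellensatz arguments, so there is no in-paper proof to compare against. Your argument is correct and is essentially Alon's original proof: reduce each variable modulo $g_i(x_i)=\prod_{s\in S_i}(x_i-s)$ to obtain a polynomial $\bar P$ of degree at most $k_i$ in each $x_i$ agreeing with $P$ on the grid, invoke the grid-vanishing lemma to force $\bar P\equiv 0$, and contradict this by observing that the top-degree monomial $x_1^{k_1}\cdots x_n^{k_n}$ survives the reduction untouched (because every substitution strictly drops total degree). The degree bookkeeping you flag as the delicate point is handled correctly.
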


\begin{theorem}
    For every nice graph $G$ with maximum degree at most $4$, we have $$\qmnsdi(G) \le 7. $$
\end{theorem}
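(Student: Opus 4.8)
The plan is to color the edges of $G$ greedily, processing the edges in a suitable order and assigning to each edge a color from $[7]$ using the Combinatorial Nullstellensatz at the moment we are forced to distinguish a vertex from its neighbors. Concretely, I would first reduce to the case where $G$ is connected (coloring each component separately), and I would handle small or sparse components directly; so assume $G$ has a vertex of degree $3$ or $4$. The key structural input is that every connected graph with minimum degree at least $2$ and maximum degree at most $4$ has a spanning structure — an ear decomposition, or an Eulerian-type orientation / decomposition into paths and even cycles — that lets us think of the quasi-majority constraint cleanly: at a vertex of degree $4$ we must avoid using any color three or more times, at a vertex of degree $3$ we may use a color at most twice, at a vertex of degree $2$ any coloring is automatically quasi-majority. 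The idea is to set up, for each vertex $v$ and each edge $uv$, variables $x_e$ ranging over $[7]$, and to use a polynomial whose non-vanishing encodes simultaneously (i) the proper-vertex-coloring condition $\sigma_c(u)\neq\sigma_c(v)$ on edges and (ii) the quasi-majority condition at vertices of degree $3$ and $4$.

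The heart of the argument is to build, for each vertex $v$ (or for each edge, processed in order), a polynomial $P_v$ in the colors of the not-yet-colored incident edges such that $P_v\neq 0$ guarantees both that $v$ is distinguished from all already-colored neighbors and that the quasi-majority condition at $v$ is not yet violated, and then to bound the relevant degrees so that $7$ available colors exceed each $k_i$. For the quasi-majority condition at a degree-$4$ vertex, "no color used $\geq 3$ times" among four edges can be written as a product of factors of the form $(x_{e_1}-x_{e_2})+(\text{something})$ or, more robustly, as a symmetric-function condition: the multiset $\{c(e_1),\dots,c(e_4)\}$ must not contain a value with multiplicity $\geq 3$, which is a polynomial condition of low degree in the four color variables. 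For the sum-distinguishing condition, the factor $\sigma_c(u)-\sigma_c(v)$ is linear in the uncolored edge variables. I would process the vertices in BFS order from a well-chosen root and, at each vertex, treat the "downward" edges as the free variables; the standard bookkeeping is to show the target monomial (the product of the free variables raised to the degrees dictated by the constraints) has nonzero coefficient.

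The main obstacle I expect is the simultaneous control of the two kinds of constraints at a single vertex: unlike in the tree case (Theorem \ref{thm:trees}) where modular arithmetic mod $3$ cleanly separates the "types" of quasi-majority colorings, here a degree-$4$ vertex on a cycle has two edges fixed by earlier steps, leaving only two free colors, and those two colors must simultaneously (a) make the sum avoid the one or two forbidden values coming from already-colored neighbors, and (b) not complete a monochromatic triple with the fixed edges. If both fixed edges already share a color $\alpha$, then the two free edges are forbidden from using $\alpha$, cutting the palette to $6$, and then one still needs to dodge up to two forbidden sums — a careful count of $7$ minus these restrictions is what must be shown to stay positive, and the Nullstellensatz coefficient computation is exactly the device that certifies this. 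I would therefore set up the edge-ordering so that every vertex retains at least two free incident edges until its turn (which is where $\Delta\le 4$ and a spanning-tree-plus-extra-edges viewpoint is used), verify the degree bounds $k_e\le 6$ for every free variable, and then compute the leading coefficient of the product polynomial — most likely a Vandermonde-type or permanent-type expression — to see it is nonzero over a field of characteristic $0$ (or a large prime).

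Finally, I would note the lower-bound side is already covered by earlier results (e.g. $C_5$ needs $5$, and cycles $C_n$ with $n\equiv 1,2\pmod 3$ need $4$), so the theorem is genuinely an upper-bound statement; the bound $7$ (rather than $6$) is what this Nullstellensatz degree count naturally yields, because a degree-$4$ vertex with both fixed edges monochromatic leaves a worst case requiring one color beyond the two forbidden sums plus the one forbidden repeated color.
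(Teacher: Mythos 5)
Your plan shares the paper's main tool (the Combinatorial Nullstellensatz) but the global greedy/BFS scheme you build around it has two gaps that would make the argument fail as stated. First, the ordering guarantee that ``every vertex retains at least two free incident edges until its turn'' is impossible in general: in a $4$-regular graph (e.g.\ $K_5$, or any $4$-regular graph, which has maximum degree $4$) the last vertex of any ordering has all four of its neighbors preceding it, so all of its incident edges are ``upward.'' More fundamentally, when you process $v$ and take its downward edges as the free variables, the sums $\sigma_c(u)$ of the earlier neighbors $u$ are not yet final (those $u$ may still have uncolored downward edges), so the factors $\sigma_c(u)-\sigma_c(v)$ you want to put into $P_v$ are not determined at that moment; and if you defer those constraints, you need a mechanism to revisit and repair them later, which your sketch does not supply. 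The paper sidesteps exactly this difficulty by inducting on the number of edges: it picks a vertex $v$ of degree $4$, deletes either two edges $vv_1,vv_2$ with $v_1v_2\in E(G)$ or the whole vertex $v$ (when $N(v)$ is independent), colors the smaller graph by induction, and then performs a single local Nullstellensatz extension in which everything except the two (respectively four) new colors is already fixed. (The paper's general $12$-color bound, by contrast, uses Kalkowski-style sets $W(v_i)$ of admissible sums precisely to handle deferred conflicts --- that is the kind of device a one-pass greedy scheme would need.)

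Second, you never verify any coefficient, and that verification is the crux of a Nullstellensatz argument: saying the leading coefficient is ``most likely a Vandermonde-type or permanent-type expression'' does not establish that it is nonzero, and the sizes of the admissible color lists must be matched exponent by exponent against the chosen monomial. The paper does this explicitly: in the adjacent-neighbors case the polynomial has the same top-degree part as $(x_1+x_2)^2(x_1-x_2)^2x_1x_2$, whose coefficient of $x_1^3x_2^3$ is $-2$, while the forbidden-color count gives lists of size at least $4$ for both edges; in the independent-neighborhood case the coefficient of $x_1^2x_2x_3^2x_4$ equals $3$, while each list has size at least $3$. Your worst-case count (``$7$ minus these restrictions stays positive'') is likewise not carried out; note that a neighbor $v_i$ with three already-colored incident edges can forbid up to four colors (three sum conflicts plus one quasi-majority conflict), which is exactly the bookkeeping the paper performs before invoking Alon's theorem. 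So the proposal identifies the right tool, but neither the ordering/deferral issue nor the decisive coefficient computations are resolved, and both are essential.
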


\begin{proof}
    We proceed by induction on the number of edges. By Theorem \ref{thm:subcubic}, the result is true for all nice subcubic graphs. Thus, the result is also true for graphs of size two or three. Assume that the result is true for graphs of size at most $m-1$.  Let $G$ be a nice graph with  $|E(G)|=m$ and $\Delta(G) = 4$. We may assume that $G$ is connected since otherwise, by induction, every component has a $\qmnsd$ 7-edge-coloring. Let $v\in V(G)$ and $d(v)=\Delta(G)=4$.

\medskip
\noindent {\bf Case 1. In $N(u)$, there are two adjacent vertices.}

Let $N(v)=\{v_1,v_2,v_3,v_4\}$ and $v_1v_2\in E(G)$. Let $G' = G - \{vv_1, vv_2\}$. Each component of $G'$ different from $K_2$ satisfies the claim. Observe that $G'$ has at most one component isomorphic to $K_2$.
By the induction hypothesis, there is a $\qmnsd$ 7-edge-coloring of the components of $G'$ that are different from $K_2$. Let $c$ be such a coloring. Additionally, if $G'$ contains a $K_2$, we extend $c$ by coloring it with any color from $[7]$.

To complete the edge-coloring, it remains to assign colors to the two edges $vv_1$ and $vv_2$ in such a way that all vertices in ${v, v_1, v_2}$ are distinguished from their neighbors. The coloring should be $\qm$ at $v,v_1,v_2,v_3,v_4$.

First, we count how many colors are forbidden for the edges $vv_1$ and $vv_2$ to obtain an edge-coloring ensuring that: (1) all neighbors, except for the pairs $(v_1,v_2)$, $(v,v_3)$, and $(v,v_4)$, are distinguished; and (2) the coloring is $\qm$ at each vertex, except $v$.

Consider the edge $vv_1$. Next, we analyze the situation based on the degree of $v_1$.  If $v_2$ is the only neighbor of $v_1$ in $G'$, then the color of $vv_1$ must be different from $c(v_1v_2)$, and therefore there is at most one forbidden color for $vv_1$. If $v_1$ has exactly two neighbors $v_2$ and $u_1$ in $G'$, then $v_1$ must be distinguished from $u_1$. However, regardless of a color we use for $vv_1$, the coloring will be $\qm$ at $v_1$ since $c(v_1u_1) \neq c(v_1v_2)$. Thus, again, at most one color is forbidden. If $v_1$ has three neighbors in $G'$, then at most one color must be forbidden to ensure that the coloring will be $\qm$ at $v_1$, and at most two additional colors must be forbidden to ensure that $v_1$ is distinguished from the two neighbors (except $v_2$). So, at most three colors are forbidden. In summary, the edge $vv_1$ can have at most three forbidden colors. Similarly, for the edge $vv_2$, there are also at most three forbidden colors.

We denote by $F_1$ the set of admissible colors for the edge $vv_1$, and by $F_2$ be the set of  admissible colors for the edge $vv_2$, so $|F_1| \geq 4$ and $|F_2| \geq 4$. By selecting a color for $vv_1$ from $F_1$ and a color for $vv_2$ from $F_2$, we obtain a 7-edge-coloring of $G$ in which all neighbors, except $(v_1, v_2)$, $(v, v_3)$, and $(v, v_4)$, are distinguished. Additionally, the coloring is $\qm$ at each vertex, except $v$.

Let $x_1 \in F_1$ and $x_2 \in F_2$ be the colors assigned to the edges $vv_1$ and $vv_2$, respectively. To ensure a $\qmnsd$ edge-coloring with the colors $x_1$ and $x_2$, the following conditions must be satisfied:

\vspace{-5pt}
\begin{itemize}\itemsep-3pt
\item $x_1 +x_2 + \smc(v) \neq \smc(v_i)$ -- we need to ensure that $v$ is distinguished from each $v_i$ for $i\in \{3,4\}$;
\item $x_2 + \smc(v) \neq \smc(v_1)$ -- we need to ensure that $v$ and $v_1$ are distinguished from each other;
\item $x_1 + \smc(v) \neq \smc(v_2)$ -- we need to ensure that $v$ and $v_2$ are distinguished from each other;
\item $x_1 + \smc(v_1) \neq x_2 + \smc(v_2)$ -- we need to ensure that $v_1$ and $v_2$ are distinguished from each other;
\item $x_1 \neq x_2$ -- we need to ensure that the coloring is $\qm$ at $v$.
\end{itemize}
\noindent  To demonstrate the existence of colors $x_1$ and $x_2$ that satisfy all the aforementioned conditions, we define the polynomial:

\vspace{-5pt}		
		\begin{align*}
			P(x_1,x_2) ={} & (x_1+x_2+\smc(v)- \smc(v_3)) \\
			& (x_1+x_2+\smc(v)- \smc(v_4) \\
			& (x_2+\smc(v)- \smc(v_1)) \\
			& (x_1+\smc(v)- \smc(v_2)) \\
			& (x_1-x_2+\smc(v_1)-\smc(v_2))\\
                & (x_1-x_2).
		\end{align*}

\noindent  
If there exist values of $x_1$ and $x_2$ such that $P(x_1, x_2) \neq 0$ and $x_i \in F_i$ for $i \in [2]$, then the $x_i$'s satisfy all the conditions. By coloring $vv_1$ and $vv_2$ with $x_1$ and $x_2$, respectively, we can extend the edge-coloring $c$ to a $\qmnsd$ edge-coloring.

We apply Theorem \ref{alon} to prove that $x_1$ and $x_2$ exist. First, we assert that the coefficient of the monomial $x_1^3 x_2^3$ is non-zero. Note that this coefficient in $P$ is identical to the one in the following polynomial:

	$$P_1(x_1,x_2)=(x_1+x_2)^2(x_1-x_2)^2x_1x_2.$$
 
\noindent The coefficient of the monomial $x_1^3 x_2^3$ is $-2$. Since $|F_1| > 3$ and $|F_2| > 3$, Theorem \ref{alon} implies that there are $x_1 \in F_1$ and $x_2 \in F_2$ such that $P(x_1, x_2) \neq 0$. Therefore, a $\qmnsd$ 7-edge-coloring of $G$ exists.

\medskip
\noindent {\bf Case 2. There is no edge in $N(v)$}
		
Let $N(v)=\{v_1,v_2, v_3,v_4\}$ and $G'=G-v$. Each component of $G'$ different from $K_2$ admits a $\qmnsd$  7-edge-coloring. Let $c$ be a $\qmnsd$  7-edge-coloring of components of $G'$ and additionally extend   $c$ to the components isomorphic to $K_2$, which we color with any color from $[7]$.

To complete the edge-coloring, we only need to color the edges $vv_i$ for $i \in [4]$. We select a color for each edge $vv_i$ in such a way that each $v_i$ is distinguished from its neighbors in $G'$, and the coloring is $\qm$ at each $v_i$ for $i \in [4]$. Additionally, after coloring the four edges $vv_1$, $vv_2$, $vv_3$, and $vv_4$, the vertex $v$ must be distinguished from its neighbors, and the coloring must be $\qm$ at $v$.

First, we count how many colors we need to forbid for the edges $vv_i$ to obtain an edge-coloring in which $v_i$ is distinguished from its neighbors in $G$, and the coloring is $\qm$ at $v_i$ for $i \in [4]$. We analyze the situation based on the degree of $v_i$ in $G'$.

If $d_{G'}(v_i)=0$, then we can use for $vv_i$ any color from $[7]$.

If $d_{G'}(v_i)=1$, then to distinguish $v_i$ from its neighbor,  at most one color is forbidden. Furthermore, the color of $vv_i$ has to be different from the color of the edge incident to $v_i$ in $G'$, resulting in at most two forbidden colors in total.

If $d_{G'}(v_i)=2$, then to distinguish $v_i$ from its neighbors,  at most two colors are forbidden. However, regardless of which color we use for $vv_i$, the coloring will still be $\qm$ at $v_i$, giving us at most two forbidden colors in total.

If $d_{G'}(v_i)=3$, then to distinguish $v_i$ from its neighbors,  at most three colors are forbidden. Furthermore, if $v_i$ is incident to two edges of one color, then this color is also forbidden for $vv_i$, so we have at most four forbidden colors in this case.

In summary, there are at most four forbidden colors for $vv_i$. We denote by $F_i$ the set of admissible colors for the edge $vv_i$, so $|F_i| \geq 3$ for $i \in [4]$, so $|F_i| \geq 3$ for $i \in [4]$. After coloring each edge $vv_i$ with a color $x_i \in F_i$ for $i \in [4]$, we obtain a 7-edge-coloring of $G$ in which all neighbours, except $(v, v_i)$ for $i \in [4]$, are distinguished. In addition, the coloring is $\qm$ at each vertex, except $u$.

Let $x_i \in F_i$ be the colors assigned to $vv_i$ for $i \in [4]$. To ensure a $\qmnsd$ edge-coloring,  for the colors $x_i$, the following conditions must also be met:
\vspace{-5pt}
		\begin{itemize}\itemsep-3pt
			\item $x_1+x_2+x_3+x_4-x_i\neq \smc(v_i)$ -- we need to ensure that  $v$ and $v_i$ are distinguished for $i\in [4]$;
			\item $x_1\neq x_2$ and $x_3\neq x_4$ -- we need to ensure that the coloring is $\qm$ at $v$.
		\end{itemize}
\noindent We consider the polynomial 
\vspace{-5pt}		
		\begin{align*}
			P(x_1,x_2,x_3,x_4)= & (x_2+x_3+x_4-\smc(v_1)) \\
			& (x_1+x_3+x_4- \smc(v_2)) \\
			& (x_1+x_2+x_4- \smc(v_3)) \\
			& (x_1+x_2+x_3- \smc(v_4)) \\
			& (x_1-x_2)(x_3-x_4).
		\end{align*}
\noindent If there are values $x_1, x_2, x_3, x_4$ such that $P(x_1, x_2, x_3, x_4) \neq 0$ and $x_i \in F_i$ for $i \in [4]$, then the values $x_i$ meet all the conditions. By coloring the edges $vv_i$ with the colors $x_i$, we can extend the edge-coloring $c$ to a $\qmnsd$ 7-edge-coloring.

We utilize Theorem \ref{alon}  again at this step to show that such $x_i$ exist. We examine the coefficient of the monomial $x_1^2 x_2 x_3^2 x_4$. Note that this coefficient in $P$ is identical to the one in the following polynomial:

$$P_1(x_1,x_2,x_3,x_4)=(x_2+x_3+x_4)(x_1+x_3+x_4)(x_1+x_2+x_4)(x_1+x_2+x_3)(x_1-x_2)(x_3-x_4).$$
\noindent The coefficient of the monomial  $x_1^2x_2x_3^2x_4$ is equal to 3. Since $|F_1|>2,|F_2|>1, |F_3|>2$ and $|F_4|\ge 1$,  Theorem \ref{alon} guarantees that there exist values   $x_i\in F_i,i\in [4]$ such that $P(x_1,x_2,x_3,x_4)\neq 0$ and therefore a $\qmnsd$ 7-edge-coloring of $G$ exists.
\end{proof}


\section{A~general upper bound} \label{sec:general upper bound} 

In \cite{KKP}, Kalkowski, Karo\'nski and Pfender proved that $\nsdi(G) \leq 5$ for every nice graph $G$. Using a~modification of their proof, we obtain the following upper bound for $\qmnsdi(G)$ of an arbitrary nice graph $G$.  

\begin{theorem}\label{genqm}
For every nice graph $G$,  $$\qmnsdi(G) \leq 12.$$
\end{theorem}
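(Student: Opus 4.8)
The plan is to adapt the Kalkowski--Karo\'nski--Pfender algorithm for $\nsdi(G)\le 5$ (from \cite{KKP}) so that, in addition to producing a neighbor sum distinguishing coloring, the final coloring is quasi-majority at every vertex. Recall the structure of that algorithm: one orders the vertices $v_1,\dots,v_n$, and processes them one at a time; when processing $v_i$, one is allowed to recolor the edges between $v_i$ and its already-processed neighbors, and the sum $\smc(v_i)$ is ``locked'' afterwards (up to a small controlled adjustment), while the sums of later vertices remain flexible because their colored-so-far prefixes are still short. The key feature we need is a budget of colors per edge that is large enough both to guarantee distinguishability and to absorb a quasi-majority correction. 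The idea is to use the palette $[12]$, think of it (roughly) as the disjoint union of a ``small'' palette $\{1,\dots,5\}$ that drives the KKP argument and a ``shift'' palette $\{6,\dots,10\}$ (and a couple of spare colors, $11,12$, for parity/cleftover issues), arranged so that the sum-distinguishing bookkeeping is unchanged modulo a fixed number while a color from the shift palette can be substituted for a color from the small palette on a monochromatically-overloaded star.

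\smallskip
The steps, in order, would be: (1) Run the KKP procedure with colors in $\{1,2,3,4,5\}$ to obtain an $\nsd$ $5$-edge-coloring $c_0$ of $G$; here the only subtlety is that at an isolated-$K_2$-free graph this always succeeds, which is exactly \cite{KKP}. (2) Observe that the only obstruction to $c_0$ being quasi-majority at a vertex $v$ is that $v$ may be incident to strictly more than $\lceil d(v)/2\rceil$ edges of one color, say color $\alpha$; call such a vertex \emph{bad}. (3) For each bad vertex $v$, recolor the $\alpha$-monochromatic excess at $v$ using colors from the shift palette chosen so that $c(e)\equiv c_0(e)\pmod 5$ is maintained for every recolored edge; this keeps $\smc(v)$ for every vertex unchanged modulo $5$, hence $\smc$ stays a proper coloring because the KKP coloring already distinguishes adjacent vertices by a difference that we can arrange to be nonzero mod $5$ (if KKP only gives a nonzero difference, not a nonzero residue, one uses the extra colors and a standard argument to force a nonzero residue, which is where colors $11$, $12$ and the exact size $12$ come in). The point is that each edge $e$ with $c_0(e)=\alpha$ can be independently moved to $\alpha$, $\alpha+5$, and when $\alpha\in\{1,2\}$ also $\alpha+10$, so we have two or three choices per edge, from which we can split any over-large monochromatic star so that no color class at $v$ exceeds $\lceil d(v)/2\rceil$. (4) Verify that this local recoloring, done simultaneously across all bad vertices, does not reintroduce a monochromatic overload at a neighbor: because we only touch edges that were color $\alpha$ (a minority-plus-one class) and we spread them into several residue-$\alpha$ classes, each class at any endpoint stays within the quasi-majority bound; a short case analysis on the few edges shared between two bad vertices finishes this.

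\smallskip
The main obstacle, which I would address most carefully, is Step (3)--(4) reconciled with distinguishability: the KKP coloring only guarantees $\smc(u)\neq\smc(v)$ for adjacent $u,v$, not $\smc(u)\not\equiv\smc(v)\pmod 5$, so naively reducing everything mod $5$ could collapse a genuine distinction. The fix is to run a \emph{modified} KKP procedure in which sums are tracked modulo a suitable number (the natural candidate being to work in $\mathbb{Z}_5$ or a product group, mirroring how Theorem~\ref{thm:group} is used for the bipartite case) so that adjacent vertices differ by a controlled residue; then shifting each color by multiples of $5$ provably preserves the distinction. Getting the arithmetic of this modification right — in particular pinning down exactly how many ``shift copies'' of each base color are needed to both (a) break every monochromatic overload and (b) keep the residue information intact, and checking that $12$ colors suffice rather than $10$ or $15$ — is the technical heart of the argument. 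Everything else (the per-vertex excess bound $\lceil d(v)/2\rceil$, the interaction between overlapping bad stars, the base case $|E(G)|\le$ small) is routine bookkeeping once the palette-and-residue bookkeeping is fixed.
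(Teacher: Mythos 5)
There is a genuine gap at the step you yourself flag as the ``technical heart'': the whole scheme of repairing quasi-majority \emph{after} running KKP hinges on the recoloring $c(e)\in\{c_0(e),c_0(e)+5,c_0(e)+10\}$ preserving the distinguishing property, which only works if adjacent vertices are distinguished modulo $5$. The Kalkowski--Karo\'nski--Pfender theorem gives $\smc(u)\neq\smc(v)$ as integers and nothing more, and your proposed remedy --- rerunning KKP ``tracked modulo a suitable number'' --- is not an available black box: Theorem~\ref{thm:group} applies only to abelian groups of odd order and to $|\Gamma|$-colorable graphs, so a $\mathbb{Z}_5$-version would require $5$-colorability (false in general) or a genuinely new argument, and it is exactly this missing argument that would carry the proof. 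A second, independent problem is the simultaneous repair in your Steps (3)--(4): a KKP coloring can make a vertex incident to an almost-monochromatic star, and splitting every color class so that \emph{both} endpoints of every edge stay within $\lceil \d(v)/2\rceil$ is a global constraint (it is precisely asking for a quasi-majority sub-coloring of each monochromatic subgraph, which by Proposition~\ref{thm:QM_equal_two} sometimes needs a third copy of a color); your budget accounting of why $12$ colors, rather than $10$ or $15$, suffice is never carried out.

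The paper avoids both difficulties by reversing the order of operations: it starts from a quasi-majority $3$-edge-coloring $c_0$ with colors $5,6,7$ (Corollary~\ref{cor:QM_upper_bound}) and then runs a KKP-style vertex-by-vertex procedure in which edge colors are only ever shifted by $\pm 4$ (staying inside $[12]$) or moved within $\{5,6,7,8\}$. Because every new color class at a vertex is a refinement of one of the original classes of $c_0$, the quasi-majority property is preserved automatically throughout the algorithm, with no post-hoc repair and no residue argument; distinguishing is achieved not via congruences inherited from KKP but by assigning to each vertex a prescribed two-element set $W(v)=\{w(v),w(v)+4\}$ with $w(v)\in\{0,1,2,3\}\pmod 8$, chosen disjoint from the sets of already-processed neighbors, with the last vertex handled separately. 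If you want to salvage your outline, the lesson from the paper is that the quasi-majority constraint should be built into the sum-adjusting algorithm (by making all adjustments class-preserving shifts of a QM base coloring) rather than patched on after the NSD coloring is fixed.
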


\begin{proof}
Without loss of generality, we may assume that $G$ is connected since otherwise we could consider connected components separately. Moreover, we may assume that $|V(G)|\geq 3$ and $\Delta(G)\geq 2$ since $G$ is a~nice graph, and there is nothing to prove for $G=K_1$. We present a~construction of a~$\qmnsd$ coloring $c:E(G)\rightarrow [12]$.

Order the vertex set $V(G)=\{v_1,\ldots, v_n\}$ such that $d(v_n)\geq 2$, and each vertex $v_i$, for $i<n$, has a~neighbor $v_j$ with $j>i$. Take any $\qm$ 3-edge-coloring $c_0$ of $G$ with colors $5,6,7$ and put an initial value $c(e)=c_0(e)$ for each edge $e\in E(G)$.  In our algorithm, we can change the color $c(e)$ of each edge $e$ at most twice. To every vertex $v_i$ with $i<n$, we will assign a~set $W(v_i)=\{w(v_i),w(v_i)+4\}$ of possible final values of $\sigma_c(v_i)$ such that $w(v_i)\in\{0,1,2,3\} \pmod 8$, and $W(v_i)\cap W(v_j)= \emptyset$ for every $v_iv_j\in E(G)$. In the final step of our algorithm, we will adjust the colors of the edges incident to $v_n$ to guarantee that $\sigma_c(v_n)\neq \sigma_c(v_i)$ whenever $v_iv_n\in E(G)$.

In the first step, we count $\smc(v_1)=\sum_{u \in N(v_1)}c_0(v_1u)$, and define the sets $W(v_1)=\{w(v_1),w(v_1)+4\}$ by putting $w(v_1)=\smc(v_1)$ if $\smc(v_1)\in \{0,1,2,3\}\pmod 8$, or $w(v_1)=\smc(v_1)-4$ otherwise. 

Let $2\leq k\leq n-1$, and assume that we have already established the set $W(v_i)$ for each $i<k$, and

\noindent
$(1)\quad c(v_iv_j)\in [12]$ for $1\leq i<j\leq n$,\\
$(2) \quad\smc(v_i)\in W(v_i)$ for $i<k$, \\
$(3)\quad c(v_kv_j)=c_0(v_jv_k)$ for $j>k$,\\
$(4)\quad  c(v_iv_k)\in \{5,6,7,8\}$ for $i<k$. 

In view of condition $(4)$, if $v_iv_k\in E(G)$, then we can either add or subtract 4 to $c(v_iv_k)$, so that the resulting coloring $c$ is still quasi-majority and $\smc(v_i)\in W(v_i)$. If $v_k$ has $d$ neighbors $v_i$ with $i<k$, then this gives us $d+1$ choices for $\smc(v_k)$. Additionally, we can change $c(v_kv_{j_0})$, where $j_0$ is the smallest $j>k$ with $v_kv_j\in E(G)$, using the following rule. It is easy to see that if we want to change the color of $c(v_kv_{j_0})$, then for each of its end-vertices $v_k,v_{j_0}$, there is at most one color violating the quasi-majority coloring of that vertex. Hence, we can choose a~new color $c(v_kv_{j_0})\in \{5,6,7,8\}$ such that $c$ is still a~quasi-majority edge-coloring of $G$. This gives us additional $d$ choices for $\smc(v_k)$. In total, we have $2d+1$ possible values for $\smc(v_k)$, at least one of them does not belong to any $W(v_i)$ for $i<k$. Therefore, a~possible recoloring of some edges $v_iv_k$ with $i<k$, and an edge $v_kv_{j_0}$, results in an edge-coloring $c$ satisfying the conditions:

\noindent
$(1')\quad c(v_iv_j)\in [12]$ for $1\leq i<j\leq n$,\\
$(2') \quad\smc(v_i)\in W(v_i)$ for $i\leq k$, \\
$(3')\quad c(v_kv_j)=c_0(v_jv_k)$ for $j>k$, except for $j=j_0$,\\
$(4')\quad  c(v_iv_n)\in \{5,6,7,8\}$ for $i<n$. 

This way, we successively assign pairwise disjoint sets $W(v_k)$ for all $k\leq n-1$.

In the final step, we have to determine $\smc(v_n)$. If $v_iv_n\in E(G)$ for some $i<n$, then condition $(4')$ allows us to subtract or add 4 to $c(v_iv_n)$ such that $\smc(v_i)\in W(v_i)$. Hence, we have $d(v_n)+1\geq 3$ possible options for $\smc(v_n)$. Let $s$ be the smallest such possible option, which we obtain by subtracting 4 from $c(v_i)$ for all $v_i\in N(v_n)$ with $\smc(v_i)=w(v_i)+4$, and adding 4 nowhere. If $s\in \{4,5,6,7\} \pmod 8$, then $s$ cannot be equal to $\smc(v_i)$ for any neighbor $v_i$ of $v_n$ since otherwise we could increase $s$ by subtracting 4 from $c(v_iv_n)$. Hence, we can put $\smc(v_n)=s$. Let then $s\in\{0,1,2,3\} \pmod 8$. If there exists a~$v_i\in N(v_n)$ with $\smc(v_i)\neq s$, then we keep $c(v_i)$ unchanged and subtract 4 everywhere else, thus obtaining a~suitable value $\smc(v_n)=s+4$. If $\smc(v_i)=w(v_i)$ for all $v_i\in N(v_n)$ then we subtract 4 from all edges incident to $v_n$ except two of them. 

The resulting values of $\smc$ yield a~proper vertex-coloring of $G$. 
\end{proof}

\medskip
For $\Delta(G)=5$ and $\Delta(G)=6$, we get a~better upper bound for $\qmnsdi(G)$ than in the results given previously.

\begin{theorem} 
For every nice graph $G$,
\begin{eqnarray*}
\qmnsdi(G) \leq \left\lceil \frac{3\Delta+4}{2}\right\rceil. 
\end{eqnarray*} 
\end{theorem}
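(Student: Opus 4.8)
The plan is to follow the same algorithmic scheme as in the proof of Theorem~\ref{genqm}, but to exploit the smaller maximum degree to reduce the size of the initial palette and the two ``adjustment'' palettes. The key parameter to control is the total number of colors: in the proof of Theorem~\ref{genqm} we used a block of $3$ colors $\{5,6,7\}$ for the initial $\qm$ coloring $c_0$ (justified by Corollary~\ref{cor:QM_upper_bound}) together with the ability to shift any incident edge-color by $\pm 4$, which forced the working palette $\{5,\dots,8\}$ and the extra head-room needed for the final vertex $v_n$, giving $12$ colors in all. When $\Delta=\Delta(G)$ is small, the structure is more rigid: the key observation is that each vertex $v_k$ with $d$ earlier neighbors still gets $2d+1$ candidate values for $\sigma_c(v_k)$, which must avoid at most $d$ forbidden pairs $W(v_i)$, i.e.\ at most $2d$ forbidden values; so $2d+1 > 2d$ always suffices, exactly as before, and the only place the palette size enters is in guaranteeing that every shift stays inside $[k']$ for the target number of colors $k' = \lceil (3\Delta+4)/2\rceil$.

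So the main steps I would carry out are: (i) order $V(G)=\{v_1,\dots,v_n\}$ with $d(v_n)\ge 2$ and each $v_i$ ($i<n$) having a later neighbor, as in Theorem~\ref{genqm}; (ii) take an initial $\qm$ $2$-edge-coloring or $3$-edge-coloring $c_0$ (Corollary~\ref{cor:QM_upper_bound}) whose colors sit in the ``middle'' of the palette — concretely, if $\Delta$ is the max degree, choose the shift amount to be roughly $\Delta$ (rather than $4$) and center the initial colors so that adding or subtracting the shift to at most $\lceil d(v)/2\rceil$ incident edges keeps colors in $[k']$ and keeps the coloring $\qm$; (iii) run the same inductive assignment of disjoint two-element ``target sets'' $W(v_k)$, where now these sets are spaced by the shift amount and reside in residue classes mod $2\cdot(\text{shift})$, using the $2d+1 \ge 2d+1$ count to pick a fresh value at each step; (iv) handle $v_n$ at the end exactly as in Theorem~\ref{genqm}, subtracting the shift from incident edges where needed, with the $d(v_n)+1\ge 3$ options guaranteeing a legal final sum. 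The arithmetic checks (which colors land in $[k']$, that $\qm$ is preserved under a single $\pm(\text{shift})$ move at each endpoint, and that the ceiling $\lceil(3\Delta+4)/2\rceil$ is exactly what the palette budget forces) are the routine parts I would not grind through here.

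The hard part will be the bookkeeping that ties the shift amount to the ceiling $\lceil (3\Delta+4)/2\rceil$: one needs to verify that a shift of size about $\Delta$ (or $\Delta+1$ depending on parity) combined with an initial $2$- or $3$-coloring fits inside that many colors while still leaving the ``at most one forbidden color per endpoint'' property for the single recolorable forward edge $v_kv_{j_0}$, and while still allowing the final vertex $v_n$ enough room to dodge its neighbors' sums. In particular the parity split — whether $\Delta$ is even or odd — will likely dictate whether the initial block has $2$ or $3$ colors and where exactly it is centered, and this is where the ceiling in the statement comes from. I would also double-check the edge case where $v_k$ has no earlier neighbor (then $d=0$ and the single candidate value may clash with a neighbor's $W$-set only if that neighbor was processed earlier, which cannot happen by the ordering) and the components-isomorphic-to-$K_2$ subtlety, both of which are handled just as in the earlier proofs.
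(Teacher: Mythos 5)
Your plan is a genuinely different route from the paper's, and it has a real gap exactly at the point you set aside as ``routine.'' In the Kalkowski-style scheme of Theorem~\ref{genqm}, the palette size is (number of base colors)$\,+\,2\cdot(\text{shift})$, and the quasi-majority property survives the $\pm(\text{shift})$ moves only because the base colors occupy distinct residue classes modulo the shift: an edge of base color $b$ only ever carries a color $\equiv b \pmod{t}$, so each final color class at a vertex is contained in a single base-color class, which has at most $\lceil d(v)/2\rceil$ edges. Since you need three base colors for the initial $\qm$ coloring (Corollary~\ref{cor:QM_upper_bound}) plus a fourth for the recolorable forward edge $v_kv_{j_0}$, this forces shift $t\ge 4$ and hence at least $2t+4\ge 12$ colors --- no improvement for $\Delta=5,6$, which is the whole point of this theorem. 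Conversely, taking the shift to be ``roughly $\Delta$,'' as you propose, yields a palette of about $2\Delta+4$ colors, which is strictly larger than $\lceil(3\Delta+4)/2\rceil$ for every $\Delta$. Moreover, nothing in the $2d+1$ versus $2d$ count involves $\Delta$ at all, so there is no mechanism in this machinery from which a bound of the form $\tfrac32\Delta+O(1)$ could emerge; the claim that ``the ceiling is exactly what the palette budget forces'' is not a bookkeeping detail but the missing (and, as far as I can see, unobtainable) heart of the argument.

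The paper proves the theorem by a different and more local method: induction on the number of edges. One deletes two adjacent edges $xy$ and $yz$ at a vertex $y$ of degree at least $2$ (with $x,z$ nonadjacent, else $G$ is complete and Theorem~\ref{thm:complete} applies), colors the rest by induction, and then extends. For each of the two deleted edges at most $1+(\Delta-1)+1=\Delta+1$ colors are forbidden (one for $\qm$ at $x$ resp.\ $z$, at most $\Delta-1$ to avoid the sums already fixed at the neighbors of $x$ resp.\ $z$, and one to avoid $\smc(y)=\smc(z)$ resp.\ $\smc(y)=\smc(x)$), leaving lists of size $s=\lceil(\Delta+2)/2\rceil$. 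The key combinatorial fact is that two lists of size $s$ contain at least $2s-3$ pairs with distinct entries and pairwise distinct sums, and $2s-3\ge \Delta-1$ suffices to dodge the at most $\Delta-2$ sums at $y$'s other neighbors while keeping the coloring $\qm$ at $y$; solving $2s-3\ge\Delta-1$ with $s=k-\Delta-1$ is precisely what produces $k=\lceil(3\Delta+4)/2\rceil$. If you want to prove this statement, you should switch to an argument of this list/counting type rather than tuning the shift in the $12$-color algorithm.
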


\begin{proof}
We can assume that $G$ is connected, as otherwise, we could analyze its connected components independently. Furthermore, we may take $|V(G)| \geq 3$ and $\Delta(G) \geq 2$, given that $G$ is a nice graph, and the case $G=K_1$ does not require any further examination.

Let $m=|E(G)|$. We employ induction with respect to $m$. For $m=2$, the claim is obvious, so assume that $m\geq3$, and that a suitable edge-coloring exists for every graph with less than $m$ edges. Let $G$ be a graph with $m$ edges. Take any vertex $y$ of degree $\d(y)\geq 2$ and edges $xy$ and $yz$ incident to it. Assume that $x$ and $z$ are not adjacent; otherwise, $G$ is a complete graph, and the assertion follows directly from Theorem \ref{thm:complete}. Let $H$ denote the graph obtained by deleting the edges $xy$ and $yx$ from $G$. Under the induction hypothesis, we know that each component of $H$ with at least two edges can be colored with a $\qmnsd$ $\left\lceil \frac{3\Delta+4}{2}\right\rceil$-edge-coloring. For the rest of the components, which consist of isolated edges or vertices, we assign color $1$. We demonstrate that this coloring can be extended to the required edge-coloring of $G$.

Suppose that the neighborhood sets of the vertices $x$, $y$, and $z$ in graph $H$ are non-empty. Let $x_{1},\ldots,x_{p}$ be the vertices adjacent to $x$ in $H$, $y_{1},\ldots,y_{q}$ be the vertices adjacent to $y$ in $H$, and $z_{1},\ldots,z_{r}$ be the vertices adjacent to $z$ in $H$. Then $p,r$ do not exceed $\Delta-1$ and $q$ does not exceed $\Delta-2$. We begin by determining the number of colors available to color the edge $xy$ in order to extend edge-coloring to the desired one.
\begin{itemize}
	\item We cannot use one color to avoid an edge-coloring  not $\qm$ at $x$.
	\item We may have to exclude at most $\Delta-1$ colors because, while choosing the color of $xy$ we fix the sum at $x$, which may be the same as the sums already determined at  $x_{1},\ldots,x_{p}$.
	\item Moreover, one color can produce the same sums at $y$ and $z$ (regardless of the color chosen for the edge $yz$, which contributes to the sums at both $y$ and $z$).
\end{itemize}
In total, we have at least $\left\lceil \frac{3\Delta+4}{2}\right\rceil-1-(\Delta-1)-1=\left\lceil \frac{\Delta+2}{2}\right\rceil$ free choices of colors for $xy$. Analogously, we also have at least $\left\lceil \frac{\Delta+2}{2}\right\rceil$ free colors
for $xy$, which is not colored yet.

All we have to do now is to choose the colors for $xy$ and $yz$. From lists of lengths at least $\left\lceil \frac{\Delta+2}{2}\right\rceil$ we have to choose two colors so that they are different (otherwise the edge-coloring at $y$ will not be $\qm$) and such that the sum at $y$ is distinct from the at most $\Delta-2$ sums already determined at $y_{1},\ldots,y_{q}$. To demonstrate that this is always achievable, it is enough to observe that for any two lists of numbers $A$, $B$, each containing $s$ elements, there are at least $2s-3$ pairs $(a_{i},b_{i}) \in A \times B$ such that $a_{i} \neq b_{i}$, $i=1,\ldots,2s-3$, for which every sums $a_{i}+b_{i}$ are all different from each other. Specifically, for $s=\left\lceil \frac{\Delta+2}{2}\right\rceil$ we obtain $2s-3 \geq (\Delta-2)+1$. In fact, these are, for example, the pairs from $(\left\{a\right\} \times (B \setminus \left\{a\right\})) \cup ((A \setminus \left\{a, b\right\}) \times \left\{b\right\})$ , where $a= \min A$ and $b = \max B$.

The resulting edge-coloring meets our conditions, even if there are isolated edges or vertices within the components of $H$. Note also, if some of the the neighborhood sets of the vertices $x$, $y$, and $z$ in $H$ are empty, the proof is analogous.
\end{proof}


\section{Majority neighbor sum distinguishing edge-colorings} \label{sec:majority}

A~$k$-edge-coloring of a~graph $G$~is called {\it majority  neighbor sum distinguishing} if it is neighbor sum distinguishing and is majority. The minimum value of $k$~for which there exists a majority neighbor sum distinguishing $k$-edge-coloring is denoted by $\mnsdi(G)$. Recall that majority edge-colorings exist only for graphs with minimum degree at least 2.
Observe, when $\Delta(G) \leq 3$, then $\mnsdi(G)=\chi'_{\sum}(G)$, because the majority edge-coloring must then be proper.   If every vertex of the graph $G$ has even degree, then $\mnsdi(G)=\qmnsdi(G)$.  Thus, our results on $\qmnsdi(G)$ presented in the previous sections can be transferred to $\mnsdi(G)$.

First, we  determine the majority neighbor sum distinguishing index of complete graphs. We require the following lemma.

\begin{lemma} \label{lem:majority complete}
   If $k \ge 3$, there exists a majority $\nsd$ $4$-edge-coloring $c$ of $K_{2k}$ such that $v_{k}$ has at most $k-2$ edges of color $2$, or $v_{k+1}$ has at most $k-2$ edges of color $3$. Here $(v_1, \ldots, v_{2k})$ denotes the ordering of vertices of $K_{2k}$ with $\smc(v_i) < \smc(v_j)$ for $i < j$.
\end{lemma}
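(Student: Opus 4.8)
The plan is to mimic the inductive construction used in Lemma~\ref{lem:complete_even}, but to track the stronger \emph{majority} condition (at most $\d(v)/2 = k$ edges of one color at each vertex of $K_{2k}$, rather than $\lceil \d(v)/2\rceil$), together with the bookkeeping on the order statistics $v_1,\ldots,v_{2k}$ of the sums. Since $K_{2k}$ has all vertices of even degree $2k-1$, being majority means at most $k-1$ edges of any one color at each vertex; with $4$ colors this is easily satisfiable in principle, and the content is to do it while keeping the coloring $\nsd$ and securing the extra clause about $v_k$ or $v_{k+1}$. First I would decompose $K_{2k} = G_1 \cup G_2$ exactly as before, with $G_1 = K_{2k}[\{v_1,\ldots,v_{2k-2}\}] \cong K_{2k-2}$ and $G_2$ the spanning subgraph consisting of the edge $v_{2k-1}v_{2k}$ together with all edges from $v_{2k-1}$ and $v_{2k}$ to $\{v_1,\ldots,v_{2k-2}\}$. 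I would apply the induction hypothesis to $G_1$ (valid once $k-1 \ge 3$, i.e.\ $k \ge 4$), and handle the base case $k=3$, i.e.\ $K_6$, by an explicit $4$-edge-coloring exhibited directly.

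The heart of the argument is the choice of $c_2$ on $G_2$ and the recoloring needed to preserve majority at the $2k-2$ "old" vertices. In Lemma~\ref{lem:complete_even} the colors $1,2,3$ sufficed because only a quasi-majority was required; here the fourth color buys room to keep every vertex strictly below the majority threshold. Concretely I would split $\{v_1,\ldots,v_{2k-2}\}$ into two halves and colour the edges from $v_{2k-1}$ and $v_{2k}$ so that each old vertex receives at most one new edge of any given color, then argue that adding at most one edge of a color to a vertex that was majority in $G_1$ keeps it at at most $k-1$ of that color — using that in $K_{2k-2}$ the majority bound is $k-2$. The edges $v_{2k-1}v_{2k}$, and the edges incident to $v_{2k-1}, v_{2k}$, are assigned among the four colors so that neither of these two new vertices exceeds $k-1$ of any color; with degree $2k-1$ and four colors this is a routine balancing. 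One then verifies the sums: as before $\smc(v_i) = \sigma_{c_1}(v_i) + (\text{constant}) $ for old $v_i$ preserves the $\nsd$ property among them, while the ranges of $\sigma_{c_1}$ and the much larger/smaller values forced at $v_{2k-1}, v_{2k}$ keep those two distinguished from everyone and from each other.

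The last piece is the clause on the renumbered vertices. After the whole coloring is fixed I would re-sort all $2k$ vertices by their sums to obtain the order $v_1,\ldots,v_{2k}$ of the statement, and then show that the two vertices landing in positions $k$ and $k+1$ can be arranged — by choosing, at the inductive step, which of the two "new" vertices plays the role of the low-sum one and which plays the high-sum one, and by using the freedom in the half-split above — so that the position-$k$ vertex has at most $k-2$ edges of color $2$ \emph{or} the position-$(k+1)$ vertex has at most $k-2$ edges of color $3$. Roughly, the construction naturally produces many vertices with few edges of color $2$ (and symmetrically color $3$), as in the "$k-1$ vertices with at most $k-1$ edges of color~2" conclusion of Lemma~\ref{lem:complete_even}; strengthening "$k-1$" to "$k-2$" for at least one of the two central vertices is where the $4$th color and careful accounting are spent.

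I expect the main obstacle to be precisely this last coordination: simultaneously (i) keeping the coloring majority (not merely quasi-majority) everywhere, which is tight since the threshold at an old vertex drops to $k-2$, and (ii) controlling \emph{which} vertices end up in the two middle positions of the sum-order and bounding their color-$2$ (resp.\ color-$3$) multiplicities by $k-2$. Getting (i) and (ii) to hold together will likely force a somewhat delicate case analysis on the parity of $k$ and on the induction hypothesis's "$v_k$ vs.\ $v_{k+1}$" disjunction, together with an explicit small case ($K_6$, and possibly $K_8$) to anchor the induction.
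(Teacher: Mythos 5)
Your skeleton (the decomposition $K_{2k}=K_{2k-2}\cup G_2$, induction with base $K_6$) is the same as the paper's, but the two decisive steps are missing, and one of them fails as you describe it. First, the ``routine balancing'' of $G_2$: if, as your half-split suggests, each of the two new vertices uses only two colors on its $2k-2$ edges to the old vertices, then each of those colors occurs exactly $k-1$ times there, and the edge $v_{2k-1}v_{2k}$ then has \emph{no} admissible color --- any choice raises one of the two new vertices to $k$ edges of a single color, violating majority. So at least one new vertex must use a third color on the old edges, and then the increments $c(v_{2k-1}v_i)+c(v_{2k}v_i)$ are no longer constant; your claim that $\smc(v_i)=\sigma_{c_1}(v_i)+\text{const}$ preserves $\nsd$ among the old vertices therefore does not apply, and you must coordinate which old vertex receives which increment with the sorted order of $\sigma_{c_1}$. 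This is exactly what the paper's explicit $c_2$ does (increment $4$ for the bottom $k-1$ old vertices, $5$ for one middle vertex, $6$ for the top ones), and in doing so it is \emph{forced} to give one old vertex ($v_{k-1}$ in Case~1, $v_k$ in Case~2) two new edges of the same color. That is the whole point of the disjunctive clause in the statement: the induction hypothesis guarantees at most $k-3$ edges of that color at that vertex, so the two extra edges still leave it within the majority bound $k-1$; which disjunct holds dictates which of the two symmetric constructions is used.

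Second, you leave precisely this clause --- the control of the vertices in positions $k$ and $k+1$ of the new sum-order --- as an anticipated difficulty rather than resolving it, whereas it is the crux of the lemma. For the record, your guiding idea ``each old vertex receives at most one new edge of any given color'' is not hopeless: one can keep a constant increment $5$ by using the pairs $(1,4)$, $(2,3)$ and a single reoriented pair $(3,2)$, slightly unbalancing the multiplicities so that color $2$ remains admissible for $v_{2k-1}v_{2k}$; then majority, the sums $\smc(v_{2k-1})=3k<\min$ and $\smc(v_{2k})=7k-6>\max$ of the shifted old sums, and the final clause (inherited directly from the induction hypothesis, since each old vertex gains at most one edge of each color) all go through, with no parity cases and no extra base case $K_8$. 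But your proposal neither supplies such an explicit coloring nor notices that it would make the clause immediate; as written, it asserts a balancing that does not exist and postpones the part of the argument the lemma was strengthened to enable.
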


\begin{proof}
    We prove this by induction on the number of vertices. The lemma is true for $k = 3$, see the majority $\nsd$ 4-edge-coloring of $K_{6}$ in Figure \ref{fig:four}.

    Assume that this is true for all complete graphs of even order with fewer than $2k$ vertices.  We decompose $K_{2k}$ into two edge disjoint subgraphs $G_1$ and $G_2$ such that $G=G_1\cup G_2$. Let $G_1$ be a complete graph $K_{2k-2}$ and let $V(G_1)=\{v_1,\ldots,v_{2k-2}\}$. Let $x,y$ be remaining two vertices of $K_{2k}$ and $G_2$ be a~spanning subgraph of $K_{2k}$ that contains edges $E(G_2)=\{xy\}\cup \{xv_i:i\in \{1,\ldots,2k-2\}\}\cup \{yv_i:i\in \{1,\ldots,2k-2\}\}$. By the induction hypothesis $G_1$ has a~majority $\nsd$ 4-edge-coloring $c_1$   such that $\sigma_{c_1}(v_i) < \sigma_{c_1}(v_j)$ for $i < j$ and $v_{k-1}$ has at most $k-3$ edges of color 2, or $v_{k}$ has at most $k-3$ edges of color 3. The coloring of $G_2$ depends on whether $v_{k-1}$ has at most $k-3$ edges of color 2, or $v_k$ has at most $k-3$ edges of color 3. Therefore, we consider two cases.

    \medskip
    \noindent
    {\bf Case 1.} $v_{k-1}$ has at most $k-3$ edges of color 2 in the coloring $c_1$.
    
     Let $c_2$ be an edge-coloring of $G_2$ such that 
    \begin{itemize}
        \item $c_2(xy)=3$;
        \item $c_2(xv_{i})=1$ for $i\in \{1,\ldots,{k-2}\}$;
        \item $c_2(xv_{i})=2$ for $i\in \{k+1,\ldots,2k-2\}$;
        \item $c_2(yv_{i})=3$ for $i\in \{1,\ldots,{k-2}\}$;
        \item $c_2(yv_{i})=4$ for $i\in \{k+1,\ldots,{2k-2}\}$;
        \item $c_2(xv_{k-1})=2$, $c_2(yv_{k-1})=2$;
        \item $c_2(xv_{k})=1$, $c_2(yv_{k})=4$.
    \end{itemize}

    Let $c$ be the edge-coloring of $K_{2k}$ such that $c(e)=c_i(e)$ if $e\in E(G_i)$, for $i=1,2$. 
We claim that $c$ is a~majority edge-coloring. By construction, $c_2$ is majority at $x$ and $y$. The vertex $v_{k-1}$ has at most $k-3$ edges in $G_1$ colored with 2. So, together with the two edges in $G_2$ colored with 2, $v_{k-1}$ has at most $k-1$ edges colored with 2. Since $c_1$ is a~majority edge-coloring,  $v_{k-1}$ is incident to at most $k-2$ vertices in color 1 and at most $k-2$ vertices in color 3. Furthermore, since every vertex $v_j$, for $j \in \{1,\ldots, k-2,k,k+1 \ldots, 2k-2\}$, is majority in $(G_1, c_1)$ and is majority in $(G_2, c_2)$, it follows that it is also majority in $(G, c)$. Thus, the edge-coloring $c$ is majority.

We show that $c$ is an $\nsd$ edge-coloring. For each vertex $v_i$, where $i \in \{1, \ldots, k-1\}$, we have $\smc(v_i) = \sigma_{c_1}(v_i) + 4$, and for each vertex $v_i$ where $i \in \{k+1, \ldots, 2k-2\}$, we have $\smc(v_i) = \sigma_{c_1}(v_i) + 6$. Additionally, $\smc(v_k) = \sigma_{c_1}(v_k) + 5$. Thus, the coloring distinguishes the vertices $\{v_1, \ldots, v_{2k-2}\}$, with $\smc(v_i) < \smc(v_j)$ for $i < j$. As $\smc(x) = 3k$ and $\smc(v_1) \ge 3(k-1) + 4$, we have $\smc(x) < \smc(v_1)$. Similarly, since $\smc(y) = 7k - 5$ and $\smc(v_{2k-2}) \ge 7(k-1) - 5 + 6$, we have $\smc(y) > \smc(v_{2k-2})$. Therefore, $c$ is indeed an $\nsd$ edge-coloring.

Since $c_1$ is a majority coloring, $v_k$ is incident to at most $k-2$ edges of color 3. In $c_2$, we avoid using color 3 for the edges incident to $v_k$, so $v_k$ has at most $k-2$ edges of color 3. Consequently, the ordering $(x, v_1, \ldots, v_{k-2}, y)$ satisfies the assumptions of the lemma.

\medskip
\noindent
    {\bf Case 2.} $v_{k}$ has at most $k-3$ edges of color 3 in the coloring $c_1$.
    
     Let $c_2$ be an edge-coloring of $G_2$ such that 
    \begin{itemize}
        \item $c_2(xy)=2$;
        \item $c_2(xv_{i})=1$ for $i\in \{1,\ldots,{k-2}\}$;
        \item $c_2(xv_{i})=2$ for $i\in \{k+1,\ldots,2k-2\}$;
        \item $c_2(yv_{i})=3$ for $i\in \{1,\ldots,{k-2}\}$;
        \item $c_2(yv_{i})=4$ for $i\in \{k+1,\ldots,{2k-2}\}$;
        \item $c_2(xv_{k-1})=1$, $c_2(yv_{k-1})=4$;
        \item $c_2(xv_{k})=3$, $c_2(yv_{k})=3$.
    \end{itemize}

    Let $c$ be the edge-coloring of $K_{2k}$ defined by setting $c(e) = c_i(e)$ if $e \in E(G_i)$, for $i = 1, 2$. As in Case 1, we can show that $c$ is a majority edge-coloring. For each vertex $v_i$, where $i \in \{1, \ldots, k-2\}$, we have $\smc(v_i) = \sigma_{c_1}(v_i) + 4$, and for each vertex $v_i$ where $i \in \{k, \ldots, 2k-2\}$, we have $\smc(v_i) = \sigma_{c_1}(v_i) + 6$. Additionally, $\smc(v_{k-1}) = \sigma_{c_1}(v_{k-1}) + 5$. Thus, this coloring distinguishes the vertices $\{v_1, \ldots, v_{2k-2}\}$, with $\smc(v_i) < \smc(v_j)$ for $i < j$. Since $\smc(x) = 3k$ and $\smc(v_1) \ge 3(k-1) + 4$, we have $\smc(x) < \smc(v_1)$. Similarly, because $\smc(y) = 7k - 5$ and $\smc(v_{2k-2}) \ge 7(k-1)-5+6$, we find out that $\smc(y) > \smc(v_{2k})$. Therefore, $c$ is  an $\nsd$ edge-coloring. Finally, the ordering $(x, v_1, \ldots, v_{k-2}, y)$ satisfies the assumptions of the lemma, since $v_{k-1}$ is adjacent to at most $k-2$ edges of color 2.
     
\end{proof}

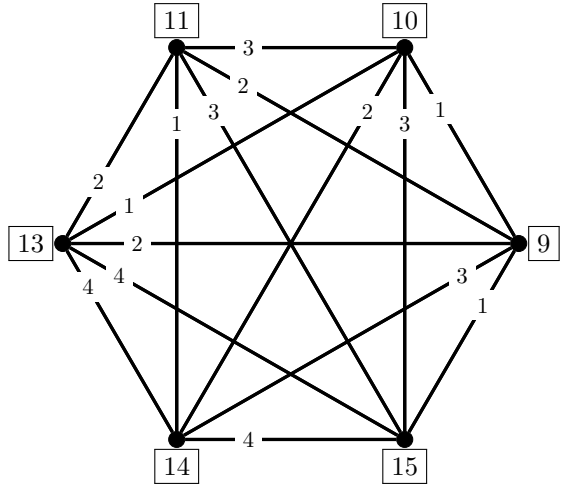
\begin{figure}[h]
\centering

\begin{tikzpicture}
    
    \def\n{6}
    \def\radius{3}

    \foreach \i in {1,...,\n} {
        
        \node[draw, vertex, fill=black, inner sep=1pt] (v\i) at ({360/\n * (\i - 1)}:\radius) {};
    }

\draw (v1) node[right] {\fbox{$9$}};
\draw (v2) node[above] {\fbox{$10$}};
\draw (v3) node[above] {\fbox{$11$}};
\draw (v4) node[left] {\fbox{$13$}};
\draw (v5) node[below] {\fbox{$14$}};
\draw (v6) node[below] {\fbox{$15$}};

    \foreach \i in {1,...,\n} {
        \foreach \j in {\i,...,\n} {
            \ifnum\i<\j
                \draw[edge] (v\i) -- (v\j);
            \fi
        }
    }

\draw[edge] (v4)to node[fill=white,pos=0.18 ,scale=0.85]{$1$} (v2);
\draw[edge] (v4)to node[fill=white, pos=0.3,scale=0.85]{$2$} (v3);
\draw[edge] (v4)to node[fill=white, pos=0.15,scale=0.85]{$2$} (v1);
\draw[edge] (v4)to node[fill=white, pos=0.2,scale=0.85]{$4$} (v5);
\draw[edge] (v4)to node[fill=white, pos=0.15,scale=0.85]{$4$} (v6);

\draw[edge] (v3)to node[fill=white,pos=0.3 ,scale=0.85]{$3$} (v2);
\draw[edge] (v3)to node[fill=white, pos=0.18,scale=0.85]{$1$} (v5);
\draw[edge] (v3)to node[fill=white, pos=0.15,scale=0.85]{$3$} (v6);
\draw[edge] (v3)to node[fill=white,pos=0.18 ,scale=0.85]{$2$} (v1);

\draw[edge] (v2)to node[fill=white, pos=0.15,scale=0.85]{$2$} (v5);
\draw[edge] (v2)to node[fill=white, pos=0.3,scale=0.85]{$1$} (v1);
\draw[edge] (v2)to node[fill=white,pos=0.18 ,scale=0.85]{$3$} (v6);

\draw[edge] (v1)to node[fill=white, pos=0.15,scale=0.85]{$3$} (v5);
\draw[edge] (v1)to node[fill=white, pos=0.3,scale=0.85]{$1$} (v6);

\draw[edge] (v5)to node[fill=white, pos=0.3,scale=0.85]{$4$} (v6);

\end{tikzpicture}
\caption{A majority $\nsd$ 4-edge-coloring of $K_{6}$.}
\label{fig:four}
\end{figure}

\begin{theorem}
 For $n\ge 3$, we have 
 $$\mnsdi\left(K_n \right) = \left\{ \begin{array}{ll}
3, & \textrm{ if $\quad n$ is odd},\\
4, & \textrm{ if $\quad n$ is even and $n\ge 6$,}\\
5, & \textrm{ if $n=4$.}\\
\end{array} \right.$$
\end{theorem}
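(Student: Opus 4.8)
The plan is to establish the three cases separately, relying on the lemmas already proved. For $n$ odd we invoke Theorem~\ref{thm:complete}: since $\qmnsdi(K_n)=3$ and every vertex of $K_n$ has even degree $n-1$, a $\qm$ edge-coloring of $K_n$ is automatically a majority edge-coloring, so $\mnsdi(K_n)=\qmnsdi(K_n)=3$. (One still checks the lower bound $\mnsdi(K_n)\ge 3$, which again follows from the computation in Theorem~\ref{thm:complete}: a majority $2$-edge-coloring of $K_n$ would be a $\qm$ $2$-edge-coloring, giving $\smc$ constant.) The case $n=4$ is handled by direct inspection: in any majority $2$- or $3$-edge-coloring of $K_4$ each vertex has degree $3$, so ``majority'' forces each vertex to be incident to at most one edge of each color, i.e. the coloring is proper; hence $\mnsdi(K_4)=\chi'_{\sum}(K_4)$, and one recalls (or checks by hand) that $\chi'_{\sum}(K_4)=5$.

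The substantive case is $n=2k$ even with $n\ge 6$. The upper bound $\mnsdi(K_{2k})\le 4$ is exactly what Lemma~\ref{lem:majority complete} delivers, since it produces a majority $\nsd$ $4$-edge-coloring of $K_{2k}$ for every $k\ge 3$. For the lower bound $\mnsdi(K_{2k})\ge 4$ I would argue that no majority $\nsd$ $3$-edge-coloring of $K_{2k}$ exists. First, in a majority $3$-edge-coloring each vertex of degree $2k-1$ is incident to at most $k-1$ edges of each color; since the three color-counts at a vertex sum to $2k-1$, the only possibility is that the multiset of color-counts at every vertex is $\{k-1,k-1,k-1\}$ when $2k-1 = 3(k-1)$, i.e. only when $k=2$ --- so for $k\ge 3$ the counts $(a_1(v),a_2(v),a_3(v))$ with $a_i(v)\le k-1$ and $\sum a_i(v)=2k-1$ form a more constrained set, but in fact for $k\ge 3$ we have $3(k-1) = 3k-3 \ge 2k-1 \iff k\ge 2$, so such triples do exist; the point is rather finer. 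I would instead use a parity/counting argument on the total color-class sizes: $\sum_v a_i(v) = 2|E_i|$ is even for each $i$, while $\sum_i\sum_v a_i(v) = 2\binom{2k}{2} = 2k(2k-1)$. Combining the per-vertex bound $a_i(v)\le k-1$ with a global count of $|E_i|\le \lfloor k(2k-1)/3\rfloor$-type estimates, together with the requirement that $\smc$ takes $2k$ distinct... no, merely $2k$ values that are pairwise distinct \emph{on edges}, i.e. all distinct since $K_{2k}$ is complete, one derives a contradiction on the range of achievable sums.

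More concretely, I expect the cleanest lower-bound argument to run as follows. In a hypothetical majority $\nsd$ $3$-edge-coloring $c$ of $K_{2k}$, all $2k$ sums $\smc(v)$ are distinct. For each $v$, writing the color-counts as $(a_1,a_2,a_3)$ with $a_1+a_2+a_3 = 2k-1$ and each $a_i\le k-1$, we get $\smc(v) = a_1 + 2a_2 + 3a_3 = (2k-1) + (a_2 + 2a_3)$, and since $a_2,a_3 \le k-1$ and $a_2 + a_3 \le 2k-1 - a_1 \le 2k-1$, the quantity $a_2+2a_3$ ranges over a set of at most $2k-1$ integers... I would pin down the exact interval: the minimum of $a_2+2a_3$ subject to the constraints is $0$ only if $a_1=2k-1$, impossible for $k\ge 2$; the true minimum is achieved at $(a_1,a_2,a_3)=(k-1,k,0)$? --- but $a_2 = k > k-1$ is forbidden, so the constrained minimum is at $(k-1,k-1,1)$ giving $a_2+2a_3 = k+1$, and the constrained maximum at $(1,k-1,k-1)$ giving $3(k-1)$, so $\smc(v) \in \{(2k-1)+(k+1),\ldots,(2k-1)+3(k-1)\} = \{3k,\ldots,5k-4\}$, an interval of only $2k-3$ integers. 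Since we need $2k$ distinct values among them, we get $2k \le 2k-3$, a contradiction. I should double-check the endpoint arithmetic and confirm that every intermediate value constraint is genuinely enforced (in particular that the minimum and maximum of $a_2+2a_3$ over the polytope $\{a_i\ge 0, \sum a_i = 2k-1, a_i\le k-1\}$ really are $k+1$ and $3k-3$); this pigeonhole step, not the construction, is the heart of the lower bound. The main obstacle I anticipate is getting these extremal values of the linear functional over the constrained simplex exactly right, since an off-by-one there collapses the argument --- but once the interval $[3k,\,5k-4]$ of length $2k-3 < 2k$ is confirmed, the contradiction is immediate, and combined with Lemma~\ref{lem:majority complete} this finishes the even case.
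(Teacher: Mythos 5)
Your proposal is correct and follows essentially the same route as the paper: the odd case via Theorem~\ref{thm:complete} (all degrees even), the upper bound for even $n\ge 6$ via Lemma~\ref{lem:majority complete}, the lower bound via exactly the same pigeonhole computation showing that with three colors every sum lies in $\{3k,\ldots,5k-4\}$, an interval of only $2k-3<2k$ integers, and $K_4$ by direct verification (your reduction $\mnsdi(K_4)=\chi'_{\sum}(K_4)=5$ is a slightly cleaner way to phrase what the paper leaves as an easy check). Your extremal values $(k-1,k-1,1)$ and $(1,k-1,k-1)$, hence the interval $[3k,5k-4]$, agree with the paper's arithmetic, so the argument goes through as you anticipated.
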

\begin{proof}

As each vertex of $K_{2k+1}$ has even degree $2k$, it follows from Theorem~\ref{thm:complete} that
$\mnsdi(K_{2k+1})=\qmnsdi(K_{2k+1})=3$. For a majority $\nsd$ edge-coloring of the complete graph $K_{2k}$ of even order, at least four colors are required. This is because no vertex can have more than $k-1$ edges of a single color incident to it. Consequently, for colors from $[3]$, the smallest possible sum at a vertex is $k-1 + 2(k-1) + 3 = 3k$, and the largest possible sum is $3(k-1) + 2(k-1) + 1 = 5k-4$. Since we can only achieve $2k-3$ distinct sums and each vertex must have a unique sum, a minimum of four colors are necessary for a majority $\nsd$ edge-coloring. It is easy to verify that $K_4$ requires five colors.  However, by Lemma \ref{lem:majority complete} four colors suffice for a majority $\nsd$ edge-coloring of any complete graph with at least 6 vertices and of even order.
\end{proof}

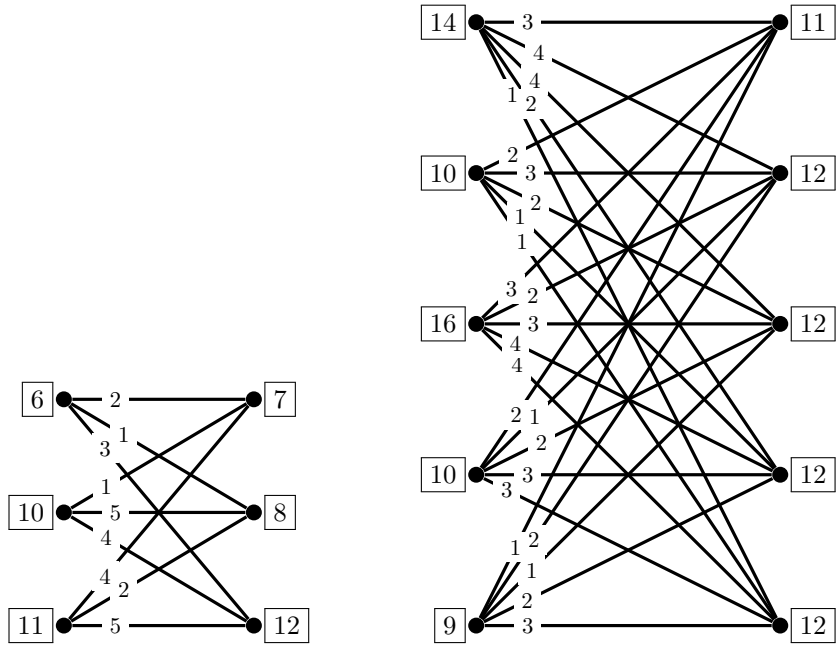
\begin{figure}[h]
\centering
\begin{tikzpicture}
\node[vertex] (1) at (0,0) {};
\node[vertex] (2) at (0,1.5) {};
\node[vertex] (3) at (0,3) {};
\node[vertex] (4) at (2.5,0) {};
\node[vertex] (5) at (2.5,1.5) {};
\node[vertex] (6) at (2.5,3) {}; 

\draw[edge] (1)to node[fill=white, near start,scale=0.85]{$5$} (4);
\draw[edge] (1)to node[fill=white, pos=0.3,scale=0.85]{$2$} (5);
\draw[edge] (1)to node[fill=white, pos=0.2,scale=0.85]{$4$} (6);
\draw[edge] (2)to node[fill=white, pos=0.2,scale=0.85]{$4$} (4);
\draw[edge] (2)to node[fill=white, near start,scale=0.85]{$5$} (5);
\draw[edge] (2)to node[fill=white, pos=0.2,scale=0.85]{$1$} (6);
\draw[edge] (3)to node[fill=white, pos=0.2,scale=0.85]{$3$} (4);
\draw[edge] (3)to node[fill=white, pos=0.3,scale=0.85]{$1$} (5);
\draw[edge] (3)to node[fill=white, near start,scale=0.85]{$2$} (6);

\draw[edge] (1) node[left] {\fbox{$11$}};
\draw[edge] (2) node[left] {\fbox{$10$}};
\draw[edge] (3) node[left] {\fbox{$6$}};
\draw[edge] (4) node[right] {\fbox{$12$}};
\draw[edge] (5) node[right] {\fbox{$8$}};
\draw[edge] (6) node[right] {\fbox{$7$}};
\end{tikzpicture}
\hspace{1cm}
   \begin{tikzpicture}
\node[vertex] (1) at (0,0) {};
\node[vertex] (2) at (0,2) {};
\node[vertex] (3) at (0,4) {};
\node[vertex] (4) at (0,6) {};
\node[vertex] (5) at (0,8) {};
\node[vertex] (6) at (4,0) {}; 
\node[vertex] (7) at (4,2) {};
\node[vertex] (8) at (4,4) {};
\node[vertex] (9) at (4,6) {};
\node[vertex] (10) at (4,8) {};

\draw[edge] (1)to node[fill=white,pos=0.15,scale=0.85]{$3$} (6);
\draw[edge] (1)to node[fill=white,pos=0.15,scale=0.85]{$2$} (7);
\draw[edge] (1)to node[fill=white,pos=0.17,scale=0.85]{$1$} (8);
 \draw[edge] (1)to node[fill=white,pos=0.18,scale=0.85]{$2$} (9);
\draw[edge] (1)to node[fill=white,pos=0.12,scale=0.85]{$1$} (10); 
    
\draw[edge] (2)to node[fill=white,pos=0.08,scale=0.85]{$3$} (6);
\draw[edge] (2)to node[fill=white,pos=0.15,scale=0.85]{$3$} (7);
\draw[edge] (2)to node[fill=white,pos=0.2,scale=0.85]{$2$} (8);
\draw[edge] (2)to node[fill=white,pos=0.18,scale=0.85]{$1$} (9);
\draw[edge] (2)to node[fill=white,pos=0.12,scale=0.85]{$2$} (10); 

\draw[edge] (3)to node[fill=white,pos=0.12,scale=0.85]{$4$} (6);
\draw[edge] (3)to node[fill=white,pos=0.11,scale=0.85]{$4$} (7);
\draw[edge] (3)to node[fill=white,pos=0.17,scale=0.85]{$3$} (8);
 \draw[edge] (3)to node[fill=white,pos=0.17,scale=0.85]{$2$} (9);
\draw[edge] (3)to node[fill=white,pos=0.1,scale=0.85]{$3$} (10);

\draw[edge] (4)to node[fill=white,pos=0.14,scale=0.85]{$1$} (6);
\draw[edge] (4)to node[fill=white,pos=0.13,scale=0.85]{$1$} (7);
\draw[edge] (4)to node[fill=white,pos=0.18,scale=0.85]{$2$} (8);
\draw[edge] (4)to node[fill=white,pos=0.16,scale=0.85]{$3$} (9);
\draw[edge] (4)to node[fill=white,pos=0.1,scale=0.85]{$2$} (10); 
    
\draw[edge] (5)to node[fill=white,pos=0.11,scale=0.85]{$1$} (6);
\draw[edge] (5)to node[fill=white,pos=0.17,scale=0.85]{$2$} (7);
\draw[edge] (5)to node[fill=white,pos=0.18,scale=0.85]{$4$} (8);
\draw[edge] (5)to node[fill=white,pos=0.19,scale=0.85]{$4$} (9);
\draw[edge] (5)to node[fill=white,pos=0.15,scale=0.85]{$3$} (10);

\draw[edge] (1) node[left] {\fbox{$9$}};
\draw[edge] (2) node[left] {\fbox{$10$}};
\draw[edge] (3) node[left] {\fbox{$16$}};
\draw[edge] (4) node[left] {\fbox{$10$}};
\draw[edge] (5) node[left] {\fbox{$14$}};
\draw[edge] (6) node[right] {\fbox{$12$}};
\draw[edge] (7) node[right] {\fbox{$12$}};
\draw[edge] (8) node[right] {\fbox{$12$}};
\draw[edge] (9) node[right] {\fbox{$12$}};
\draw[edge] (10) node[right] {\fbox{$11$}};
\end{tikzpicture}
\caption{A majority $\nsd$ 5-edge-coloring of $K_{3,3}$ and 4-edge-coloring of $K_{5,5}$.}
\label{fig:two}
\end{figure}

\begin{figure}[h]
\centering
\begin{tikzpicture}[scale=0.8]
\node[vertex] (1) at (0,0) {};
\node[vertex] (2) at (0,3) {};
\node[vertex] (3) at (0,6) {};
\node[vertex] (4) at (0,9) {};
\node[vertex] (5) at (0,12) {};
\node[vertex] (6) at (6,0) {}; 
\node[vertex] (7) at (6,3) {};
\node[vertex] (8) at (6,6) {};
\node[vertex] (9) at (6,9) {};
\node[vertex] (10) at (6,12) {};

\node[vertex] (a) at (0,15) {};
\node[vertex] (b) at (0,18) {};
\node[vertex] (c) at (6,15) {};
\node[vertex] (d) at (6,18) {};

\draw[edge] (1)to node[fill=white,pos=0.12,scale=0.75]{$1$} (6);
\draw[edge] (1)to node[fill=white,pos=0.12,scale=0.75]{$2$} (7);
\draw[edge] (1)to node[fill=white,pos=0.12,scale=0.75]{$3$} (8);
 \draw[edge] (1)to node[fill=white,pos=0.12,scale=0.75]{$3$} (9);
\draw[edge] (1)to node[fill=white,pos=0.12,scale=0.75]{$2$} (10); 
\draw[edge] (1)to node[fill=white,pos=0.12,scale=0.75]{$2$} (c); 
\draw[edge] (1)to node[fill=white,pos=0.12,scale=0.75]{$3$} (d); 
    
\draw[edge] (2)to node[fill=white,pos=0.103,scale=0.75]{$1$} (6);
\draw[edge] (2)to node[fill=white,pos=0.10,scale=0.75]{$2$} (7);
\draw[edge] (2)to node[fill=white,pos=0.108,scale=0.75]{$3$} (8);
\draw[edge] (2)to node[fill=white,pos=0.108,scale=0.75]{$3$} (9);
\draw[edge] (2)to node[fill=white,pos=0.108,scale=0.75]{$3$} (10); 
\draw[edge] (2)to node[fill=white,pos=0.108,scale=0.75]{$2$} (c); 
\draw[edge] (2)to node[fill=white,pos=0.108,scale=0.75]{$2$} (d); 

\draw[edge] (3)to node[fill=white,pos=0.098,scale=0.75]{$2$} (6);
\draw[edge] (3)to node[fill=white,pos=0.10,scale=0.75]{$1$} (7);
\draw[edge] (3)to node[fill=white,pos=0.10,scale=0.75]{$2$} (8);
 \draw[edge] (3)to node[fill=white,pos=0.10,scale=0.75]{$2$} (9);
\draw[edge] (3)to node[fill=white,pos=0.10,scale=0.75]{$3$} (10);
\draw[edge] (3)to node[fill=white,pos=0.10,scale=0.75]{$3$} (c); 
\draw[edge] (3)to node[fill=white,pos=0.105,scale=0.75]{$3$} (d); 

\draw[edge] (4)to node[fill=white,pos=0.10,scale=0.75]{$2$} (6);
\draw[edge] (4)to node[fill=white,pos=0.09,scale=0.75]{$1$} (7);
\draw[edge] (4)to node[fill=white,pos=0.09,scale=0.75]{$1$} (8);
\draw[edge] (4)to node[fill=white,pos=0.12,scale=0.75]{$1$} (9);
\draw[edge] (4)to node[fill=white,pos=0.13,scale=0.75]{$2$} (10); 
\draw[edge] (4)to node[fill=white,pos=0.09,scale=0.75]{$3$} (c); 
\draw[edge] (4)to node[fill=white,pos=0.1,scale=0.75]{$3$} (d); 
    
\draw[edge] (5)to node[fill=white,pos=0.108,scale=0.75]{$3$} (6);
\draw[edge] (5)to node[fill=white,pos=0.10,scale=0.75]{$3$} (7);
\draw[edge] (5)to node[fill=white,pos=0.10,scale=0.75]{$1$} (8);
\draw[edge] (5)to node[fill=white,pos=0.10,scale=0.75]{$1$} (9);
\draw[edge] (5)to node[fill=white,pos=0.10,scale=0.75]{$2$} (10); 
\draw[edge] (5)to node[fill=white,pos=0.10,scale=0.75]{$2$} (c); 
\draw[edge] (5)to node[fill=white,pos=0.10,scale=0.75]{$1$} (d); 

\draw[edge] (a)to node[fill=white,pos=0.108,scale=0.75]{$3$} (6);
\draw[edge] (a)to node[fill=white,pos=0.105,scale=0.75]{$3$} (7);
\draw[edge] (a)to node[fill=white,pos=0.10,scale=0.75]{$2$} (8);
\draw[edge] (a)to node[fill=white,pos=0.10,scale=0.75]{$2$} (9);
\draw[edge] (a)to node[fill=white,pos=0.10,scale=0.75]{$1$} (10); 
\draw[edge] (a)to node[fill=white,pos=0.10,scale=0.75]{$1$} (c); 
\draw[edge] (a)to node[fill=white,pos=0.10,scale=0.75]{$1$} (d); 

\draw[edge] (b)to node[fill=white,pos=0.116,scale=0.75]{$3$} (6);
\draw[edge] (b)to node[fill=white,pos=0.115,scale=0.75]{$3$} (7);
\draw[edge] (b)to node[fill=white,pos=0.11,scale=0.75]{$2$} (8);
\draw[edge] (b)to node[fill=white,pos=0.11,scale=0.75]{$2$} (9);
\draw[edge] (b)to node[fill=white,pos=0.11,scale=0.75]{$1$} (10); 
\draw[edge] (b)to node[fill=white,pos=0.11,scale=0.75]{$1$} (c); 
\draw[edge] (b)to node[fill=white,pos=0.11,scale=0.75]{$1$} (d); 
    
\draw[edge] (1) node[left] {\fbox{$16$}};
\draw[edge] (2) node[left] {\fbox{$16$}};
\draw[edge] (3) node[left] {\fbox{$16$}};
\draw[edge] (4) node[left] {\fbox{$13$}};
\draw[edge] (5) node[left] {\fbox{$13$}};
\draw[edge] (6) node[right] {\fbox{$15$}};
\draw[edge] (7) node[right] {\fbox{$15$}};
\draw[edge] (8) node[right] {\fbox{$14$}};
\draw[edge] (9) node[right] {\fbox{$14$}};
\draw[edge] (10) node[right] {\fbox{$14$}};
\draw[edge] (a) node[left] {\fbox{$13$}};
\draw[edge] (b) node[left] {\fbox{$13$}};
\draw[edge] (c) node[right] {\fbox{$14$}};
\draw[edge] (d) node[right] {\fbox{$14$}};
\end{tikzpicture}
\caption{A majority $\nsd$  3-edge-coloring of $K_{7,7}$.}
\label{fig:three}
\end{figure}
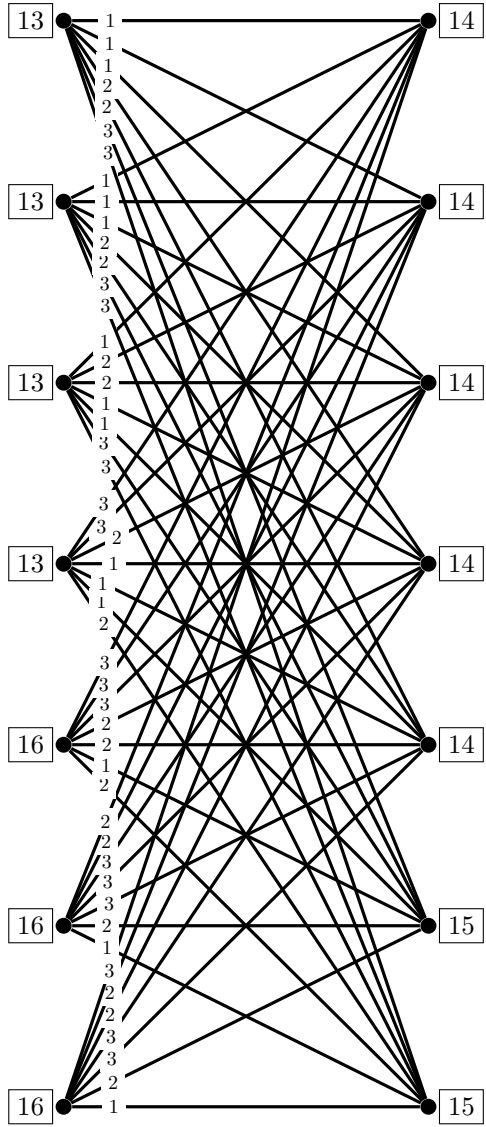

\begin{theorem}
 Let both $m,n$ be even. Then 
 $$\mnsdi\left(K_{n,m} \right) = \left\{ \begin{array}{ll}
4, & \textrm{ if $\; n=m=2$},\\
3, & \textrm{ if $\; n=m\geq 4$},\\
2, & \textrm{ if $\; n\neq m$}. 
\end{array} \right.$$
If at least one of the integers $n,m$ is odd and $n,m \ge 2$, then 
$$\mnsdi\left(K_{n,m} \right) = \left\{ \begin{array}{ll}
5, & \textrm{ if $\; n=m=3$},\\
4, & \textrm{ if $\; n=m= 5$},\\
3, & \textrm{ otherwise. }
\end{array} \right.$$
\end{theorem}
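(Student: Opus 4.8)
The plan is to dispatch the case of $n,m$ both even at once and to concentrate the work on the case where at least one of $n,m$ is odd.

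\emph{Both $n,m$ even.} Then every vertex of $K_{n,m}$ has even degree, so, as observed at the start of Section~\ref{sec:majority}, $\mnsdi(K_{n,m})=\qmnsdi(K_{n,m})$, and the three listed values are exactly those of Theorem~\ref{compbip}.

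\emph{At least one of $n,m$ odd: lower bounds.} Since $\min(n,m)\ge 2$ a majority colouring exists. Some vertex $v$ has odd degree, and any $2$-edge-colouring then puts $\lceil d(v)/2\rceil>d(v)/2$ edges of one colour at $v$; hence $\mnsdi(K_{n,m})\ge 3$ throughout this regime. For $K_{3,3}$ every degree is $3$, so a majority colouring is \emph{proper}, and I claim $\chi'_{\sum}(K_{3,3})=5$. In $K_{3,3}$ adjacency is complete across the bipartition, so $\nsd$ forces the $\smc$-value sets of the two sides to be disjoint. In a proper $\le 4$-colouring each vertex meets three distinctly coloured edges, so each $\smc$-value lies in $\{6,7,8,9\}$; a short computation using $\sum_{V_1}\smc=\sum_{V_2}\smc$ shows that disjointness of the two value sets forces some colour to be unused, whence the colouring is a proper $3$-colouring of the $3$-regular $K_{3,3}$ and all $\smc$-values coincide — contradiction. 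So $\mnsdi(K_{3,3})\ge 5$, and Figure~\ref{fig:two} gives a $5$-colouring. For $K_{5,5}$ every degree is $5$, so majority means at most two edges of each colour at a vertex; with three colours each $\smc$-value lies in $\{9,10,11\}$, and since only three values are available while the two sides avoid a common value and $\sum_{V_1}\smc=\sum_{V_2}\smc$, one side must be $\smc$-constant. That pins its colour-count profile, and counting the edges of each colour then forces the other side to the same profile and the same $\smc$-value — a contradiction. Hence $\mnsdi(K_{5,5})\ge 4$, and Figure~\ref{fig:two} gives a $4$-colouring.

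\emph{The bound $\mnsdi(K_{n,m})\le 3$ for all remaining pairs.} If $g:=\gcd(n,m)\ge 2$ and $n\neq m$, write $n=gn'$, $m=gm'$, split $\mathbb{Z}_g$ into parts $T_1,T_2,T_3$ of sizes $t_1,t_2,t_3$ as equal as possible (so each $t_k\le\lfloor g/2\rfloor$), and colour the edge between the $i$-th vertex of the $n$-side and the $j$-th vertex of the $m$-side by the $k$ with $(i-j)\bmod g\in T_k$. A residue count gives each $n$-side vertex exactly $m't_k$ edges of colour $k$ and each $m$-side vertex exactly $n't_k$; the bound $t_k\le\lfloor g/2\rfloor$ makes this majority, and $\smc$ equals $m'\sum_k kt_k$ on one side and $n'\sum_k kt_k$ on the other, which differ because $m'\neq n'$. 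The remaining pairs are those with $\gcd(n,m)=1$ (which forces $n\neq m$ and at least one of them odd) and those with $n=m$ odd $\ge 7$; for these the divisibility obstruction blocks a uniform colouring, and one argues by explicit construction in the style of the colouring of $K_{7,7}$ in Figure~\ref{fig:three}: prescribe a common colour-profile on one side, distribute the resulting per-colour edge totals over the other side using profiles that all avoid the single $\smc$-value of the first side, realise the colour-degree sequences by a cyclic pattern, and when the totals do not match exactly absorb the discrepancy by perturbing a bounded number of vertices while keeping every vertex majority. For $n=m$ odd one can instead induct from $K_{7,7}$ by adding two vertices to each side, as in Lemma~\ref{lem:majority complete}. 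This last step is the main obstacle: unlike the $\gcd\ge 2$, $n\neq m$ case there is no clean uniform colouring, and one must simultaneously control the majority condition, the disjointness of the two sides' $\smc$-values, and the per-colour edge-count constraints.
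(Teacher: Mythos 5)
Your treatment of the even--even case (reduce to Theorem~\ref{compbip} via $\mnsdi=\qmnsdi$ for Eulerian-degree graphs), of the lower bound $\mnsdi\ge 3$ in the presence of an odd-degree vertex, and of the $K_{5,5}$ lower bound is sound and matches the paper in spirit. But the decisive part of the theorem is the upper bound $\mnsdi(K_{n,m})\le 3$ for \emph{all} remaining pairs, and there you have a genuine gap: your uniform residue colouring only covers $\gcd(n,m)\ge 2$ with $n\neq m$, and for the two remaining infinite families --- coprime $n,m$ (which includes, e.g., $K_{2,3}$, $K_{3,4}$, $K_{4,5},\dots$) and $n=m$ odd, $n\ge 7$ --- you give only a strategy (``prescribe a profile, absorb the discrepancy by perturbing a bounded number of vertices'', or ``induct from $K_{7,7}$ as in Lemma~\ref{lem:majority complete}'') and you explicitly flag it as the main obstacle. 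That is exactly where the paper does its real work: it takes the interval colouring $c'(a_ib_j)=i+j-1$, reduces it modulo $3$ (majority follows from $\lceil d/3\rceil\le\lfloor d/2\rfloor$), checks case by case on $n,m \bmod 3$ that the two sides' sum ranges are disjoint, patches the one bad family $K_{3k+1,3k+2}$ by recolouring a matching from colour $1$ to colour $3$, and handles $K_{2k+1,2k+1}$ ($k\ge 4$) by starting from the mod-$2$ reduction and recolouring an explicit bounded set of edges to colour $3$ (with $K_{7,7}$ done by a figure). None of this is routine bookkeeping that can be waved through; without it the theorem's ``$3$ otherwise'' clause is unproved.

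A secondary issue: in your $K_{3,3}$ lower bound, the intermediate claim that disjointness of the two sides' sum sets together with $\sum_{V_1}\smc=\sum_{V_2}\smc$ ``forces some colour to be unused'' is false as stated. The split with side-$1$ values in $\{6,9\}$ and side-$2$ values in $\{7,8\}$ admits equal totals (e.g.\ $6+6+9=7+7+7$) with all four colours present in the vertex palettes; it is eliminated not by an unused colour but by counting, for a fixed colour, the edges of that colour from the two sides of the bipartition (e.g.\ colour $3$ appears once at every side-$1$ vertex but at no side-$2$ vertex). The paper instead first rules out the sum $9$ (a vertex of sum $9$ has no incident edge of colour $1$, all others have exactly one, and a colour-$1$ edge count across the bipartition plus the NSD condition excludes it) and then disposes of the all-sums-in-$\{6,7,8\}$ situation. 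Your conclusion $\mnsdi(K_{3,3})=5$ is correct, but the argument as written does not establish it.
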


\begin{proof}
Theorem~\ref{compbip} implies that when both $n$ and $m$ are even and $K_{n,m} \neq K_{2,2}$, we have $\mnsdi\left(K_{n,m}\right) = 2$ if $n \neq m$, and $\mnsdi\left(K_{n,n}\right) = 3$ for $n\ge 4$. Otherwise, if $K_{n,m}$ contains vertices of odd degrees at least 3, that is, for odd $m$ or $n$, then at least three colors are necessary for a majority neighbor sum distinguishing edge-coloring. However, in the case of $K_{3,3}$, even four colors are insufficient to distinguish adjacent vertices. 
To see this, observe that in any  majority $\nsd$ 4-edge-coloring  $c$ of $K_{3,3}$, we have $\smc(v) \in \{6, 7, 8, 9\}$ for every $v \in V(K_{3,3})$.  If $\smc(v) = 9$, then there is no edge colored with 1 incident to $v$. On the other hand, if $\sigma_c(v) \in \{6, 7, 8\}$, then $v$ must be incident to an edge colored with 1.
Thus, to achieve a majority $\nsd$ edge-coloring, no vertex can have the sum of 9. However, it can be easily verified that if $\smc(v) \in \{6, 7, 8\}$ for every vertex $v \in V(K_{3,3})$, it becomes impossible to distinguish between the vertices of different sets in the bipartition.
For $K_{5,5}$, we need at least four colors for a majority $\nsd$ edge-coloring. To see that there is no majority $\nsd$ 3-edge-coloring of $K_{5,5}$, observe that in any such coloring $c$, we have $\smc(v) \in \{9, 10, 11\}$ for every $v \in V(K_{5,5})$. This makes it impossible to distinguish between vertices from the two different sets of the bipartition.  Figure \ref{fig:two} presents a majority $\nsd$ 5-edge-coloring of $K_{3,3}$ and a majority $\nsd$ 4-edge-coloring of $K_{5,5}$. Thus, we have $\mnsdi(K_{3,3}) = 5$ and $\mnsdi(K_{5,5}) = 4$.

Now, consider the remaining cases, that is, when at least one of the integers $n,m$ is odd. 

Let $K_{n,m}=(V_1,V_2,E)$, where $V_1=\left\{a_1,\ldots,a_n\right\}$, $V_2=\left\{b_1,\ldots,b_m\right\}$.  Let $c':E(K_{n,m})\rightarrow [n+m-1]$ be the interval coloring of $K_{n,m}$ such that $c'(a_ib_j)=i+j-1$. First, we consider all cases except when $n = m$. In these cases, using the coloring $c'$, we construct a new edge-coloring $c: E(K_{n,m}) \rightarrow [3]$ by setting $c(e) \equiv c'(e) \pmod{3}$. It is straightforward to see that $c$ is a majority coloring, as $\lceil d/3 \rceil \leq \lfloor d/2 \rfloor$ for $d \geq 2$. Now, we claim that $c$ is also distinguishing. 

If $n=3k$ and $m=3\ell$, then $\smc(a_i)=6\ell$  and $\smc(b_j)=6k$ for $i \in [n]$ and $j \in [m]$. By our assumption, $k\neq \ell$, so $\smc(a_i) \neq \smc(b_j)$ for all $i \in [n]$ and $j \in [m]$, which means that $c$ is a majority $\nsd$ edge-coloring.
If $n=3k$ and $m=3\ell+1$ (or $m=3\ell+2$), then $\smc(a_i) \in \{6\ell+1, 6\ell+2, 6\ell+3\}$ (or $\smc(a_i) \in \{6\ell+3, 6\ell+4,  6\ell+5\}$) and $\smc(b_j)=6k$ for $i \in [n]$ and $j \in [m]$. Therefore, $\smc(a_i) \neq \smc(b_j)$ for all $i \in [n]$ and $j \in [m]$. Thus, $c$ is a majority $\nsd$ edge-coloring. 
If $n=3k+1$ and $m=3\ell+1$, then $\smc(a_i)\in \{6\ell+1, 6\ell+2,6\ell+3\}$  and $\smc(b_j)\in \{6k+1,6k+2,6k+3\}$ for $i \in [n]$ and $j \in [m]$. Since $k\neq \ell$, we have $\smc(a_i) \neq \smc(b_j)$ for all $i \in [n]$ and $j \in [m]$. Similarly, if $n=3k+2$ and $m=3\ell+2$, then $\smc(a_i)\in \{6\ell+3, 6\ell+4,6\ell+5\}$  and $\smc(b_j)\in \{6k+3,6k+4,6k+5\}$ for $i \in [n]$ and $j \in [m]$. As in the previous case, since $k\neq \ell$,  we have $\smc(a_i) \neq \smc(b_j)$ for all $i \in [n]$ and $j \in [m]$.

Suppose $n = 3k + 1$ and $m = 3\ell + 2$. In this case, we have $\smc(a_i) \in \{6\ell + 3, 6\ell + 4, 6\ell + 5\}$ and $\smc(b_j) \in \{6k + 1, 6k + 2, 6k + 3\}$ for $i \in [n]$ and $j \in [m]$. If $k \neq \ell$, then $\smc(a_i) \neq \smc(b_j)$ for all $i \in [n]$ and $j \in [m]$, resulting in a majority $\nsd$ edge-coloring.
However, if $k = \ell$ (i.e., for $K_{3k+1, 3k+2}$), we need to recolor some edges. Specifically, we recolor the edges $a_i b_i$ for $i \in \{1, 4, \ldots, 3k + 1\}$, where initially $c(a_i b_i) = 1$. We recolor these edges to 3. After recoloring, we have $\smc(a_i) = 6k + 5$ and $\smc(b_j) = 6k + 3$ for $i, j\in \{1, 4, \ldots, 3k + 1\}$.
Thus, after recoloring, $\smc(a_i) \in \{6k + 4, 6k + 5\}$ and $\smc(b_j) \in \{6k + 2, 6k + 3\}$ for all $i \in [3k+1],j\in [3k+2]$, ensuring a  $\nsd$ edge-coloring. Furthermore, for $i \in \{1, 4, \ldots, 3k + 1\}$, vertex $a_i$ has $k$ edges in color 1, $k + 1$ edges in color 2, and $k + 1$ edges in color 3. Similarly, for $j \in \{1, 4, \ldots, 3k + 1\}$, vertex $b_j$ has $k$ edges in color 1,  $k$ edges in color 2, and $k + 1$ edges in color 3. Therefore, the coloring remains a majority edge-coloring.

Finally, suppose $n = m = 2k + 1$.  Figure \ref{fig:three} presents a majority $\nsd$ 3-edge-coloring of $K_{7,7}$. Thus, we may assume that $k\geq 4$. In this case, using the coloring $c'$, we first construct a new edge-coloring $c: E(K_{n,m}) \rightarrow [2]$ by setting $c(e) \equiv c'(e) \pmod{2}$. As a result, we have $\smc(a_i) = \smc(b_i) = 3k + 1$ for $i \in \{1, 3, \ldots, 2k + 1\}$ and $\smc(a_i) = \smc(b_i) = 3k + 2$ for $i \in \{2, 4, \ldots, 2k\}$.
In the next step, we recolor certain edges with color 3. Specifically, we assign:
\begin{enumerate}
    \item $c(a_ib_i)=3$, $c(a_ib_{i+1})=3$, $c(a_{i+1}b_i)=3$, $c(a_{i+1}b_{i+1})=3$ for  $i\in \{1,3,5,\ldots, 2k-1\}$,
    \item $c(a_{2k+1}b_{2k+1})=3$,
    \item $c(a_ib_{i+1})=3$, $c(a_ib_{i+2})=3$ for $i\in \{2,4,\ldots, 2k-2\}$,
    \item $c(a_{2k}b_{1})=3$, $c(a_{2k}b_{2})=3$,
    \item $c(a_1b_{2k+1})=3$.
\end{enumerate}
After steps 1 and 2 we have $\smc(a_i) =\smc(b_i)= 3k + 4$ for $i \in \{1, 3, \ldots, 2k - 1\}$ and $\smc(a_i)=\smc(b_i) = 3k + 5$ for $i \in \{2, 4, \ldots, 2k\}$ and $\smc(a_{2k+1})=\smc(b_{2k+1}) = 3k + 3$.

After steps 3, 4, 5 of recoloring, as a result, we have:
\begin{itemize}
    \item $\smc(a_i)=3k + 8$ for $i \in \{2,4,\ldots,2k\}$,
    \item $\smc(a_i)= 3k + 4$ for $i \in \{3,5,7,\ldots 2k-1\}$,
    \item $\smc(a_1) = 3k +6$, $\smc(a_{2k+1}) = 3k +3$,
     \item $\smc(b_i)=3k + 7$ for $i \in \{2,4,\ldots,2k\}$,
    \item $\smc(bi)= 3k + 5$ for $i \in \{1,3,\ldots 2k+1\}$.
\end{itemize}

Thus, $c$ is an $\nsd$ edge-coloring. Since at each vertex (except $a_{2k+1}$) at least one edge was recolored from 1 to 3 and at least one edge from 2 to 3, each vertex (except $a_{2k+1}$) has at most $k$ edges in color 1 and at most $k$ edges in color 2. At vertex $a_{2k+1}$, the edge previously colored 1 was recolored to 3. Therefore, even at $a_{2k+1}$, there are at most $k$ edges colored 1 and at most $k$ edges colored 2. Additionally, there are at most four edges in color 3 at each vertex. Hence, the coloring is a majority $\nsd$ edge-coloring.
\end{proof}

\bigskip
The following result provides a general upper bound for $\mnsdi(G)$. Although its proof is analogous to that of Theorem~\ref{genqm}, we provide it for convenience. 
\begin{theorem}\label{genm}
For every graph $G$ with $\delta(G)\ge 2$, $$\mnsdi(G)\leq 18.$$    
\end{theorem}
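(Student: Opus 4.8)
The plan is to re-run the Kalkowski--Karo\'nski--Pfender-style sweep used in the proof of Theorem~\ref{genqm}, this time starting from a \emph{majority} colouring and working inside a wider band of colours. Since $\delta(G)\ge 2$, Theorem~\ref{m4} supplies a majority $4$-edge-coloring $c_0$ of $G$, which plays the role that the $\qm$ $3$-edge-coloring of Corollary~\ref{cor:QM_upper_bound} played there. I would reserve the six middle colours $\{7,8,\dots,12\}$ of $[18]$ as the working band (so that a band edge shifted down by $6$ lands in $\{1,\dots,6\}$ and one shifted up by $6$ lands in $\{13,\dots,18\}$, always inside $[18]$), set $c:=c_0$ using only the four colours $\{7,8,9,10\}$, and replace the target sets $W(v_i)=\{w(v_i),w(v_i)+4\}$ with $w(v_i)\equiv 0,1,2,3\pmod 8$ by $W(v_i)=\{w(v_i),w(v_i)+6\}$ with $w(v_i)\equiv 0,1,2,3,4,5\pmod{12}$.

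After reducing to a connected $G$ with $|V(G)|\ge 3$ and ordering $V(G)=\{v_1,\dots,v_n\}$ so that $d(v_n)\ge 2$ and every $v_i$ with $i<n$ has a neighbour $v_j$ with $j>i$, I would process $v_2,\dots,v_{n-1}$ in turn, maintaining the invariants of Theorem~\ref{genqm} with $8$ replaced by $12$ and $4$ by $6$: all edge colours remain in $[18]$, every forward edge $v_iv_k$ (with $i<k$ and $v_k$ not yet processed) stays in the band, each edge is recoloured at most twice, $\smc(v_i)\in W(v_i)$ for every processed $v_i$, and $W(v_i)\cap W(v_j)=\emptyset$ whenever $v_iv_j\in E(G)$. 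When $v_k$ is processed, shifting each of its $d$ already-coloured backward edges by $0$ or by the forced amount $\pm 6$ (the nonzero direction being the one keeping $\smc(v_i)$ in $W(v_i)$) yields $d+1$ candidate values of $\smc(v_k)$ spaced by $6$; recolouring the single minimal forward edge $v_kv_{j_0}$ to an admissible band colour shifts the residue modulo $6$ and contributes up to $d$ further candidates, giving $2d+1$ possible values of $\smc(v_k)$. Since each value of $w(v_k)$ is compatible with at most two of these, at least $d+1$ distinct choices of $w(v_k)$ remain, so one of them avoids the at most $d$ forbidden sets $W(v_i)$ of the already-processed neighbours. Finally, $v_n$ is treated exactly as in Theorem~\ref{genqm}: there are $d(v_n)+1\ge 3$ options for $\smc(v_n)$, the smallest of them works if it is $\equiv 6,\dots,11\pmod{12}$, and otherwise the required value is obtained by an adjustment of the colours at $v_n$ of exactly the kind used there.

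The step I expect to be the real obstacle is making sure that none of these recolourings destroys the majority property; this is strictly more delicate than in the quasi-majority setting and is the reason the band must have width $6$ rather than $4$. One must check that shifting a backward edge of $v_k$ by $\pm 6$ does not over-represent its new colour at the already-processed endpoint $v_i$, and, above all, that when $v_kv_{j_0}$ is recoloured there are always enough admissible band colours at \emph{both} endpoints $v_k$ and $v_{j_0}$, including one that moves the residue class of $\smc(v_k)$. The worst case is a vertex of degree $3$: under a majority colouring such a vertex must be incident to three edges of pairwise distinct colours, so two band colours may be forbidden at each endpoint of $v_kv_{j_0}$, whereas at most one is forbidden in the quasi-majority proof. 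A band of six colours, four of which are occupied by $c_0$, is exactly what leaves the needed slack, and carrying this local bookkeeping through the sweep yields $\mnsdi(G)\le 18$.
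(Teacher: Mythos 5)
Your proposal is correct and follows essentially the same route as the paper: it adapts the Kalkowski--Karo\'nski--Pfender-style sweep of Theorem~\ref{genqm}, starting from a majority $4$-edge-coloring on colors $7,8,9,10$ (Theorem~\ref{m4}), using the band $\{7,\ldots,12\}$ with windows $W(v_i)=\{w(v_i),w(v_i)+6\}$, $w(v_i)\in\{0,\ldots,5\}\pmod{12}$, and handling the last vertex exactly as before. You even pinpoint the same key local fact the paper invokes, namely that at most two band colors are forbidden at each endpoint when the forward edge $v_kv_{j_0}$ is recolored, so the width-$6$ band always leaves an admissible choice.
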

\begin{proof}
By Theorem~\ref{m4}, $G$ has a majority 4-edge coloring.
Again, we may assume that $G$ is connected. We will construct a majority NSD coloring $c:E(G)\rightarrow [18]$.

Order the vertex set $V(G)=\{v_1,\ldots, v_n\}$ such that each vertex $v_i$, for $i<n$, has a~neighbor $v_j$ with $j>i$. Let $c_0$ be a majority $4$-edge-coloring of $G$ with colors $7,8,9,10$. Set an initial value $c(e)=c_0(e)$ for each edge $e\in E(G)$.  To every vertex $v_i$ with $i\leq n-1$, we will assign a~set $W(v_i)=\{w(v_i),w(v_i)+6\}$ of possible final values of $\sigma_c(v_i)$ such that $w(v_i)\in\{0,\ldots 5\} \pmod {12}$, and $W(v_i)\cap W(v_j)= \emptyset$ for every $v_iv_j\in E(G)$. 

In the first step, we count $\smc(v_1)=\sum_{u \in N(v_1)}c_0(v_1u)$, and define the sets $W(v_1)=\{w(v_1),w(v_1)+6\}$ by putting $w(v_1)=\smc(v_1)$ if $\smc(v_1)\in \{0,\ldots,5\}\pmod {12}$, or $w(v_1)=\smc(v_1)-6$ otherwise. 

Let $2\leq k\leq n-1$, and assume that we have already established the set $W(v_i)$ for each $i<k$, and

\noindent
$(1)\quad c(v_iv_j)\in [18]$ for $1\leq i<j\leq n$,\\
$(2) \quad\smc(v_i)\in W(v_i)$ for $i<k$, \\
$(3)\quad c(v_kv_j)=c_0(v_jv_k)$ for $j>k$,\\
$(4)\quad  c(v_iv_k)\in \{7,\ldots 12\}$ for $i<k$. 

In view of condition $(4)$, if $v_iv_k\in E(G)$, then we can either add or subtract $6$ to $c(v_iv_k)$, so that the resulting $c$ is still a majority coloring ,and $\smc(v_i)\in W(v_i)$. If $v_k$ has $d$ neighbors $v_i$ with $i<k$, then this gives us $d+1$ choices for $\smc(v_k)$. Furthermore, we can change $c(v_kv_{j_0})$, where $j_0$ is the smallest $j>k$ with $v_kv_j\in E(G)$, using the following rule. It is not difficult to see that if we want to change the color of $c(v_kv_{j_0})$, then for each of its end-vertices $v_k,v_{j_0}$, there are at most two colors violating the majority coloring of that vertex. Hence, we can choose a~new color $c(v_kv_{j_0})\in \{7,\ldots,12\}$ such that $c$ is still a~majority edge-coloring of $G$. This gives us additional $d$ choices for $\smc(v_k)$. In total, we have $2d+1$ possible values for $\smc(v_k)$, at least one of them does not belong to any $W(v_i)$ for $i<k$. Therefore, a~possible recoloring of some edges $v_iv_k$ with $i<k$ and an edge $v_kv_{j_0}$ results in an edge-coloring $c$ satisfying the conditions:

\noindent
$(1')\quad c(v_iv_j)\in [18]$ for $1\leq i<j\leq n$,\\
$(2') \quad\smc(v_i)\in W(v_i)$ for $i\leq k$, \\
$(3')\quad c(v_kv_j)=c_0(v_jv_k)$ for $j>k$, except for $j=j_0$,\\
$(4')\quad  c(v_iv_n)\in \{7,\ldots,12\}$ for $i<n$. 

This way, we successively assign pairwise disjoint sets $W(v_k)$ to all $k\leq n-1$.

In the final step, we have to determine $\smc(v_n)$. If $v_iv_n\in E(G)$ for some $i<n$, then condition $(4')$ allows us to subtract or add 6 to $c(v_iv_n)$ such that $\smc(v_i)\in W(v_i)$. Hence, we have $d(v_n)+1\geq 3$ possible options for $\smc(v_n)$. Let $s$ be the smallest such possible option, which we obtain by subtracting 6 from $c(v_i)$ for all $v_i\in N(v_n)$ with $\smc(v_i)=w(v_i)+6$, and adding 6 nowhere. If $s\in \{6,\ldots,11\} \pmod {12}$, then $s$ cannot be equal to $\smc(v_i)$ for any neighbor $v_i$ of $v_n$ since otherwise we could increase $s$ by subtracting 6 from $c(v_iv_n)$. Hence, we can put $\smc(v_n)=s$. Let then $s\in\{0,\ldots,5\} \pmod {12}$. If there exists a~$v_i\in N(v_n)$ with $\smc(v_i)\neq s$, then we keep $c(v_i)$ unchanged and subtract 6 everywhere else, thus obtaining a~suitable value $\smc(v_n)=s+6$. If $\smc(v_i)=w(v_i)$ for all $v_i\in N(v_n)$ then we subtract 6 from all edges incident to $v_n$ except two of them. 

The resulting values of $\smc$ yield a~proper vertex-coloring of $G$.     
\end{proof}

It is not difficult to modify the proof of Theorem~\ref{genm} to justify the following two results.
\begin{proposition}\label{delta4}
If $G$ is a graph with minimum degree at least $4$, then $\mnsdi(G)\leq 15$.   
\end{proposition}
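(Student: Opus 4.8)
The plan is to rerun the proof of Theorem~\ref{genm} with every numerical parameter decreased by one, the saving being paid for by the hypothesis $\delta(G)\ge 4$. As there, we may assume $G$ is connected with $|V(G)|\ge 3$ and $\Delta(G)\ge 2$. By Theorem~\ref{m4}, $G$ has a majority $4$-edge-colouring $c_0$; I would place its colours in the window $\{6,7,8,9\}\subseteq\{6,\dots,10\}$, set $c(e)=c_0(e)$ initially, and allow each colour $c(e)$ to change at most twice. Order $V(G)=\{v_1,\dots,v_n\}$ so that $\d(v_n)\ge 2$ and every $v_i$ with $i<n$ has a neighbour of larger index. To each $v_i$ with $i<n$ I would attach a target set $W(v_i)=\{w(v_i),\,w(v_i)+5\}$ with $w(v_i)\in\{0,1,2,3,4\}\pmod{10}$ and $W(v_i)\cap W(v_j)=\emptyset$ for every $v_iv_j\in E(G)$, maintaining throughout the analogues of invariants $(1)$--$(4)$ from the proof of Theorem~\ref{genm}, with ``$6$'' replaced by ``$5$'' and ``$\{7,\dots,12\}$'' replaced by ``$\{6,\dots,10\}$''.

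The first point to record is that a shift by $\pm 5$ is always safe: if every edge at a vertex $v$ currently has a colour in $\{6,\dots,10\}$, then changing the colour of any collection of these edges from $\gamma$ to $\gamma+5$ (or $\gamma-5$) keeps $c$ majority at $v$, since a colour in $\{11,\dots,15\}$ (resp. $\{1,\dots,5\}$) can only be produced from edges that previously had the single colour $\gamma$, and $\{1,\dots,5\}$, $\{6,\dots,10\}$, $\{11,\dots,15\}$ are pairwise disjoint, so the new count of $\gamma\pm 5$ is at most the old count of $\gamma$, hence at most $\lfloor \d(v)/2\rfloor$. This is exactly why a window of size $5$ forces the shift to be $5$, and why the colours actually used lie in $\{1,\dots,15\}$. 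With this in hand, the processing of $v_1,\dots,v_{n-1}$ should go through verbatim: when we reach $v_k$, the admissible $\pm 5$-flips of the $d$ backward edges at $v_k$ give $d+1$ candidate values for $\smc(v_k)$, recolouring the edge $v_kv_{j_0}$ (to the smallest-indexed later neighbour) within the window gives $d$ further candidates, for $2d+1$ in total, and at least one of them avoids the at most $2d$ values forbidden by the sets $W(v_i)$ of backward neighbours. The final vertex $v_n$ is then handled exactly as in Theorem~\ref{genm} with ``$6$'' and ``$\!\!\pmod{12}$'' replaced by ``$5$'' and ``$\!\!\pmod{10}$'': the $\d(v_n)+1\ge 3$ candidate values for $\smc(v_n)$ together with the residue analysis produce a value distinct from $\smc(v_i)$ for every neighbour $v_i$, and the resulting $\smc$ is proper.

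The one place where something genuinely new is required — and the step I expect to be the main obstacle — is the recolouring of the forward edge $v_kv_{j_0}$, since in Theorem~\ref{genm} a $6$-colour window was used precisely because up to $2+2$ colours can be forbidden at its two endpoints, whereas here only $5$ colours are available. The plan is to exploit that $\d(v_k),\d(v_{j_0})\ge 4$: in a majority colouring, recolouring an edge incident to a vertex $v$ of \emph{even} degree is dangerous for at most one colour (if two colours each occur $\d(v)/2$ times they exhaust all edges at $v$, so the colour of the recoloured edge is one of them and cannot be pushed past the bound), while at a vertex of \emph{odd} degree at least $5$ at most two colours are dangerous; hence across the two endpoints at most $3$ colours are forbidden unless both $v_k$ and $v_{j_0}$ have odd degree, and one has to show — working in the state reached after the backward flips, and using that $\smc(v_k)$ also has the backward-flip freedom to fall back on — that in this last case a colour of the $5$-element window different from the current one still survives. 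Granting this, a majority $15$-edge-colouring of $G$ realising a proper $\smc$ is obtained, so $\mnsdi(G)\le 15$; everything else is a mechanical down-shift of the parameters in the proof of Theorem~\ref{genm}.
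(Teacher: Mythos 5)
Your route is not the paper's. The paper gets $15=3\cdot 5$ by invoking the result of \cite{BKPPRW} that every graph with $\delta(G)\ge 4$ admits a majority \emph{three}-edge-colouring, and then rerunning the proof of Theorem~\ref{genm} with this 3-colour palette inside a 5-colour window (shift $\pm 5$, target sets $W(v_i)=\{w(v_i),w(v_i)+5\}$ modulo $10$), i.e.\ it keeps the same ``palette plus two spare colours'' slack as in the 18-colour proof. You instead keep the majority 4-edge-colouring of Theorem~\ref{m4} and squeeze it into a 5-colour window, leaving only one spare colour, and try to pay for the lost slack with $\delta(G)\ge 4$ in the forward-edge step. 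That is exactly where your argument is incomplete, and you say so yourself (``one has to show \dots Granting this''). The open case is genuine: take $d(v_k)=d(v_{j_0})=5$, window $\{6,\dots,10\}$, $c(v_kv_{j_0})=8$, the other four edges at $v_{j_0}$ coloured $6,6,7,7$, and the other four edges at $v_k$ coloured $9,9,10,10$ (colour $10$ can occur there, since earlier forward-edge recolourings may use any window colour). Then every window colour other than $8$ already appears $\lfloor 5/2\rfloor=2$ times at one of the two endpoints, so no recolouring of $v_kv_{j_0}$ inside the window preserves the majority condition, and your bound ``at most $3$ forbidden colours unless both degrees are odd'' does not dispose of this configuration.

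Your suggested fallback, using the backward-flip freedom at $v_k$, points in a reasonable direction but is not yet an argument. Two window colours can be dangerous at $v_k$ only if none of its backward edges has been flipped (all $d(v_k)-1$ remaining edges must lie in the window and split $m$--$m$ between two colours), so in any state with at least one flip at most one colour is dangerous at $v_k$; but then the forward recolouring is available only for some of the $d+1$ backward-flip states, and the count ``at least $2d+1$ candidate values of $\smc(v_k)$ against at most $2d$ forbidden ones'' must be redone for this state-dependent option, including possible coincidences among the shifted sums, because the admissible new colour may change from state to state. Until that accounting is supplied, the proposition is not proved along your lines; the paper's proof avoids the issue at the source by shrinking the initial palette to three colours (which is exactly what $\delta(G)\ge 4$ buys) rather than shrinking the spare room in the window.
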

\begin{proposition}\label{meven}
If $G$ is a graph of even size with all vertices of even degrees, then $\mnsdi(G)\leq 12$.
\end{proposition}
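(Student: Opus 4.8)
The simplest plan is to avoid re-running any algorithm at all. Since every vertex of $G$ has even degree, $\lceil d(v)/2\rceil=d(v)/2$ for every $v$, so a $k$-edge-coloring of $G$ is majority if and only if it is quasi-majority; in particular it is a majority neighbor sum distinguishing coloring exactly when it is a quasi-majority neighbor sum distinguishing coloring, whence $\mnsdi(G)=\qmnsdi(G)$, as already noted at the start of Section~\ref{sec:majority}. A graph all of whose degrees are even has no component isomorphic to $K_2$ (such a component consists of two vertices of degree $1$), hence is nice, and its isolated vertices, if any, impose no constraint, so majority colorings of $G$ do exist. Theorem~\ref{genqm} then yields $\mnsdi(G)=\qmnsdi(G)\le 12$; note that the hypothesis ``$|E(G)|$ even'' is not actually needed. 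This is the proof I would present.

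If instead one wants a self-contained argument, one can modify the proof of Theorem~\ref{genm} directly, re-running that algorithm with a smaller palette. The only real change is the starting coloring: after reducing to a connected graph, Theorem~\ref{T2}(1) (when the size is even) --- or Theorem~\ref{T2}(2) followed by recoloring one edge at the exceptional vertex, when a component happens to have odd size --- furnishes a majority coloring $c_0$ that uses at most three colors, which I would place on $\{5,6,7\}$. Then, exactly as in the proof of Theorem~\ref{genqm}, I would take $[12]$ as the palette, use $\{5,6,7,8\}$ as the ``working band'', allow each edge-color to be changed by $\pm 4$, and attach to every $v_i$ with $i<n$ a pair $W(v_i)=\{w(v_i),w(v_i)+4\}$ with $w(v_i)\in\{0,1,2,3\} \pmod 8$ and $W(v_i)\cap W(v_j)=\emptyset$ whenever $v_iv_j\in E(G)$. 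Processing $v_1,\dots,v_{n-1}$ in the prescribed order, at each $v_k$ with $d$ already-treated neighbors there are $2d+1$ reachable values of $\smc(v_k)$ against at most $2d$ values forbidden by the pairs attached to those neighbors, so a legal $W(v_k)$ always exists; a final adjustment of the edges at $v_n$ (of which there are $d(v_n)+1\ge 3$ options) then completes the coloring.

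The one step that needs an actual check --- and the place the margins are tightest --- is the recoloring of the single forward-edge $v_kv_{j_0}$: with a band of only four colors, down from the six used in Theorem~\ref{genm}, one must still be able to keep the coloring majority at both $v_k$ and $v_{j_0}$. Here I would use that, by condition $(4)$ of the algorithm, every edge incident to $v_k$, and likewise to $v_{j_0}$, currently has a color in $\{5,6,7,8\}$, together with the fact that $d(v_k)$ is even: if two colors were simultaneously at capacity at $v_k$, each carrying $d(v_k)/2$ edges, they would already account for all $d(v_k)$ edges there, so one of them must be the present color of $v_kv_{j_0}$ and hence at most one color is genuinely forbidden for its new color at $v_k$; the same holds at $v_{j_0}$. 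A four-element band therefore leaves at least one admissible choice, just as a six-element band does in Theorem~\ref{genm}, and since ``majority'' and ``quasi-majority'' coincide at even-degree vertices the rest of the counting ($2d+1$ against $2d$, and $d(v_n)+1\ge 3$ at the end) is verbatim that of Theorem~\ref{genqm}. That last observation is also precisely why the one-line first approach works, and it is the proof I would actually write down.
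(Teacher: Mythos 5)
Your proposal is correct, and your primary argument takes a shorter route than the paper does. The paper proves Proposition~\ref{meven} by re-running the algorithm of Theorem~\ref{genm} with a different seed: it invokes Theorem~\ref{T2}(1) -- this is where the even-size hypothesis enters -- to obtain a majority $2$-edge-coloring as the initial coloring, and then carries out the $\pm 4$-shift scheme on the palette $[12]$, exactly the verification you sketch as your backup. Your first proof instead observes that at even-degree vertices $\lceil d(v)/2\rceil = d(v)/2$, so majority and quasi-majority colorings coincide and $\mnsdi(G)=\qmnsdi(G)$ (a reduction the paper itself records at the start of Section~\ref{sec:majority}), whence Theorem~\ref{genqm} gives the bound $12$ directly; this buys a genuinely cleaner proof and shows the even-size hypothesis is not needed for the stated bound, something the paper's route does not reveal. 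Your backup argument is essentially the paper's proof, and you correctly identify and settle the only delicate point of the adaptation, namely that with a working band of only four colors the forward edge $v_kv_{j_0}$ can still be recolored: at an even-degree vertex two color classes at capacity $d(v)/2$ would exhaust all incident edges, so at most one color is genuinely forbidden at each endpoint. You also handle a detail the paper glosses over: for a disconnected $G$ of even size an individual component may have odd size with all degrees even, so one needs Theorem~\ref{T2}(2) plus the one-edge recoloring (rather than item 1 alone) to seed that component with a majority coloring on at most three colors; your treatment of this is correct.
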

Indeed, Proposition \ref{delta4} follows from the fact proven in~\cite{BKPPRW} that every graph $G$ with $\delta(G)\ge 4$ admits a majority 3-edge-coloring. To prove Proposition~\ref{meven}, we apply item 1. of Theorem~\ref{T2}, which implies that a graph of even size with all vertices of even degrees has a majority 2-edge-coloring.


\section{Conclusion and open problems}\label{sec:open problems}

In this paper, we explore the quasi-majority neighbor sum distinguishing edge-coloring of graphs. We provide general upper bounds and determine exact values or upper bounds for the $\qmnsd$ index of specific graph classes.

A key question arising from our results is to determine the minimum integer $k$ such that every graph admits a quasi-majority neighbor sum distinguishing $k$-edge-coloring. We prove that 12 colors suffice for any graph. However, our findings for specific graph classes suggest that this number could be much smaller. The graph with the highest known $\qmnsd$ index 5 is $C_5$. Thus, the primary open problem is to identify an infinite family of graphs with a $\qmnsd$ index of at least 5. Nevertheless, we believe that such a family might not exist and propose the following conjecture.
\begin{conjecture}
Every nice graph $G \neq C_5$ satisfies $\qmnsdi(G) \leq 4$.
\end{conjecture}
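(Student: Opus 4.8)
The plan is to prove the conjecture by induction on $|E(G)|$, after reducing to $G$ connected and $G\neq C_5$ (the disconnected case, including the degenerate one in which a $C_5$ component forces five colors, being disposed of component by component). A pendant edge at a leaf has only a bounded number of forbidden colors, so one may assume $\delta(G)\ge 2$; the vertices of degree exactly $2$ then induce a disjoint union of paths and cycles, which I would peel off explicitly. This is more delicate than it looks, because Propositions~\ref{prop:path} and~\ref{prop:cycle} show that short suspended paths and the cycles $C_n$ with $n\equiv 1,2\pmod 3$ already require four colors, so there is no slack in the thin part and $C_5$ must be excluded by hand. After suppressing the degree-$2$ vertices one is left with a ``core'' of minimum degree at least $3$, and, together with Proposition~\ref{thm:subcubic} (subcubic graphs) and the theorem on graphs of maximum degree at most $4$ (whose bound would have to be strengthened from $7$ to $4$), the real work is the core with large maximum degree.

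For the core I would not start from a $\qm$ coloring, but from an $\nsd$ $3$-edge-coloring with colors $\{1,2,3\}$ --- which exists by the theorem of Keusch --- and then repair the quasi-majority defects using a single fourth color. At a vertex $v$ at which the coloring is not quasi-majority, the dominant color class is too large; since every core vertex has degree at least $3$ and we have the spare color $4$, recoloring a controlled number of edges of this class to color $4$ destroys the defect at $v$, and because such a recoloring shifts $\smc(v)$ in a predictable way it simultaneously produces several candidate values for $\smc(v)$. Processing the core in a BFS order, as in Theorems~\ref{genqm} and~\ref{genm}, one would then always choose a value of $\smc(v)$ different from the (at most $d(v)-1$) sums already fixed at earlier neighbors, keeping every vertex quasi-majority. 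This is essentially the ``$2d+1$ choices'' mechanism of Theorem~\ref{genqm}, but attempted with one extra color rather than the large palette used there.

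The main obstacle is that recoloring an edge $uv$ perturbs both $\smc(u)$ and $\smc(v)$, so a repair made at $v$ can create a new collision at $u$; a BFS order controls this only at the price of a band of colors wide enough to absorb $\pm$-shifts on edges touched more than once, which is precisely why Theorems~\ref{genqm} and~\ref{genm} need twelve, respectively eighteen, colors. Intuitively the conjecture should hold because, with four colors, a vertex of large degree has an enormous supply of quasi-majority colorings realizing many different sums, so the only genuinely tight constraints live at vertices of degree $3$ or $4$ and in short cycles and paths --- exactly the cases already settled, with no room to spare, by Propositions~\ref{prop:path}, \ref{prop:cycle}, \ref{thm:subcubic} and the maximum-degree-$4$ theorem. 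Turning this into an inductive step that never calls for a fifth color forces one to track very precisely how repairs propagate through a region of tightly interlocked low-degree vertices --- for example a $4$-regular graph, where neither ``there is plenty of room'' nor ``suppress a degree-$2$ vertex'' applies --- and I expect that handling this propagation, presumably via a more global assignment of target sums than the edge-by-edge one above, is the crux of any proof and the reason the conjecture is still open.
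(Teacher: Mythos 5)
The statement you are addressing is not a theorem of the paper but its concluding open problem: the paper proves only $\qmnsdi(G)\leq 12$ in general (Theorem~\ref{genqm}), with better bounds for special classes, and explicitly poses the bound $4$ as a conjecture. Your text, by your own admission in its final sentence, does not close this gap: it is a plan whose decisive step is left unresolved, so it cannot be accepted as a proof. Concretely, two of its ingredients are themselves open statements at least as hard as what they are meant to establish. First, you invoke the maximum-degree-$4$ theorem ``whose bound would have to be strengthened from $7$ to $4$''; that strengthening is precisely an unproven special case of the conjecture, and nothing in the paper (whose $\Delta\le 4$ proof via the Combinatorial Nullstellensatz genuinely needs lists of size about $4$ out of $7$ colors) suggests how to obtain it. Second, the reduction of the thin part (pendant edges, suspended paths, induced cycles) is asserted, not performed; as you note, Propositions~\ref{prop:path} and~\ref{prop:cycle} leave no slack there, so ``peeling off'' degree-$2$ vertices changes sums and degrees in ways that must be reconciled with the coloring of the core, and no mechanism for this reconciliation is given.

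The core argument also fails as described. Starting from Keusch's $\nsd$ $3$-edge-coloring and recoloring edges of the dominant class at a defective vertex $v$ to color $4$ shifts $\smc(u)$ at every other endpoint $u$ of a recolored edge, so the $\nsd$ property you began with is destroyed exactly where you have no further room to repair it. The ``$2d+1$ choices'' mechanism of Theorems~\ref{genqm} and~\ref{genm} works only because colors are confined to a reserved middle band ($\{5,6,7,8\}$, resp. $\{7,\dots,12\}$) inside a much larger palette, so that a $\pm 4$ (resp. $\pm 6$) adjustment on an already-treated edge keeps the coloring legal and keeps each earlier vertex's sum inside its preassigned two-element set $W(v_i)$; with only four colors there is no such band, a single recoloring can both violate quasi-majority at the other endpoint and move an earlier neighbor's sum onto a forbidden value, and the choice count collapses. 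Since you neither quantify the available sums at a vertex of degree $3$ or $4$ in a $4$-regular core nor give the ``more global assignment of target sums'' you gesture at, the crux of the conjecture remains untouched; what you have written is a reasonable research outline, but not a proof of the statement.
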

For majority colorings, we suppose the following.
\begin{conjecture}
Every graph $G$ with $\delta(G)\ge 2$ satisfies $\mnsdi(G) \leq 5$.
\end{conjecture}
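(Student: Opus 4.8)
The plan is to push the Kalkowski--Karo\'nski--Pfender sweep underlying Theorem~\ref{genm} down from $18$ to $5$ colors, at the cost of a far tighter control of the majority constraint; since this is the concluding conjecture, the proposal is a research strategy rather than a finished argument. Recall that for $\delta(G)\ge 2$ and $\Delta(G)\le 3$ a majority coloring is automatically proper, so $\mnsdi(G)=\chi'_{\sum}(G)$. I would therefore first isolate this subcubic regime and treat it as an instance of the (still open) neighbor sum distinguishing proper edge-coloring conjecture, aiming to establish $\chi'_{\sum}(G)\le 5$ for $2\le\delta(G)\le\Delta(G)\le 3$ by the discharging and algorithmic methods available for subcubic graphs. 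For the complementary range $\Delta(G)\ge 4$ I would work with a palette symmetric about a centre, writing the five colors as $\{-2,-1,0,1,2\}$ and identifying this set with $[5]$ by a shift of $3$, so that a coloring whose incident colors at each vertex are balanced about $0$ is automatically majority.

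Next I would start from a majority base coloring --- Theorem~\ref{m4} already supplies one with $4$ colors --- and process the vertices in an ordering $v_1,\dots,v_n$ in which every $v_i$ with $i<n$ has a later neighbor. Exactly as in the proof of Theorem~\ref{genm}, to each $v_i$ I would attach a pair $W(v_i)=\{w(v_i),w(v_i)+s\}$ of admissible sums, pairwise disjoint along edges, using the edges toward already-processed neighbors as adjustment handles together with one reserve edge toward a future neighbor. The decisive new ingredient is the bookkeeping: every time an incident edge is shifted up or down within the symmetric palette, the color counts change at \emph{both} endpoints, and each must be kept at most $\lfloor d(v)/2\rfloor$. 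I would encode, for each vertex and each color, a running slack, and argue that among the $2d+1$ sums reachable at $v_k$ at least one simultaneously avoids every $W(v_i)$ of an earlier neighbor and is attained by a sequence of majority-safe shifts.

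The crux, and where I expect the genuine difficulty to lie, is at the vertices of degree $4$ and $5$: here the majority threshold $\lfloor d(v)/2\rfloor=2$ is so small that only two incident edges may share a color, so a single shift can already saturate a color class and block the next adjustment. Controlling this forces one to choose the step $s$ and the centre of the palette so that the two values in each $W(v_i)$ are realized by genuinely distinct, majority-compatible color patterns, and then to prove a counting lemma guaranteeing that enough such patterns survive once the neighbors have been fixed. A secondary obstruction is that the subcubic case is entangled with an unresolved conjecture, so a fully unconditional proof of $\mnsdi(G)\le 5$ would in particular settle $\chi'_{\sum}(G)\le 5$ for subcubic graphs of minimum degree $2$; absent that, the cleanest partial target is the bound for all $G$ with $\delta(G)\ge 4$, where Proposition~\ref{delta4} already delivers $15$ and the symmetric-palette sweep enjoys the most room to maneuver.
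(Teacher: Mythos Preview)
This statement is posed in the paper as an open \emph{conjecture}; the paper does not prove it and offers no proof sketch. You correctly recognize this and present a research strategy rather than a proof, so there is nothing to compare against.

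As a strategy, your outline is structurally sensible and you correctly isolate the two genuine obstructions. First, the subcubic case with $\delta(G)\ge 2$ really is equivalent to the open bound $\chi'_{\Sigma}(G)\le 5$ for such graphs (the best known is $6$), so an unconditional proof of the conjecture would settle that problem. Second, the KKP-style sweep with a five-color palette faces exactly the bottleneck you name at degrees $4$ and $5$. One imprecision worth tightening: the claim that a coloring whose incident colors are ``balanced about $0$'' is automatically majority is not true as stated --- at a vertex of degree $4$ with all incident edges colored $0$ the multiset is perfectly symmetric about $0$ yet the majority constraint $\lfloor 4/2\rfloor=2$ is violated. More seriously, the two-element window scheme $W(v_i)=\{w(v_i),w(v_i)+s\}$ needs a residue system large enough to separate up to $\Delta$ neighbors; with only five colors the available step $s$ and the modulus are too small for the argument of Theorem~\ref{genm} to go through as written, so a genuinely new idea (not merely tighter bookkeeping) would be required. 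Your suggestion to first target $\delta(G)\ge 4$, where Proposition~\ref{delta4} gives $15$, is a reasonable intermediate goal.
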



\end{document}